\documentclass[11pt]{article}
\usepackage[utf8]{inputenc}
\usepackage[english]{babel}
\usepackage{graphicx}
\usepackage{amsmath}
\usepackage{amssymb}
\usepackage{amsfonts}
\usepackage{amsthm}
\usepackage[left=2.1cm,top=1.5cm,right=2.1cm, bottom=2.2cm,letterpaper]{geometry}
\usepackage{mathrsfs}
\usepackage{caption}
\usepackage{subcaption}
\usepackage{latexsym}
\usepackage{xfrac}
\usepackage{tcolorbox}
\usepackage{enumitem}
\usepackage{float}
\usepackage{hyperref}
\usepackage[all]{hypcap}
\usepackage{autonum}
 \usepackage{url}
\usepackage[toc]{appendix}
\newtheorem{theorem}{Theorem}[section]

\newtheorem{lemma}{Lemma}[section]
\newtheorem{definition}{Definition}[section]
\newtheorem{proposition}{Proposition}[section]

\newtheorem{remark}{Remark}[section]
\numberwithin{equation}{section}
\numberwithin{figure}{section}

\newcommand{\R}{\mathbb{R}}
\newcommand{\eps}{\varepsilon}

\def\LL{\mathcal{L}}
\def\A{\mathcal{A}}

\def\HH{\mathcal{H}}
\def\VV{\mathcal{V}}
\newcommand{\bs}{\boldsymbol}

\makeatletter
\renewcommand*\env@matrix[1][*\c@MaxMatrixCols c]{%
	\hskip -\arraycolsep
	\let\@ifnextchar\new@ifnextchar
	\array{#1}}
\makeatother

\hypersetup{
	colorlinks=true,
	linkcolor=red,
	filecolor=magenta,
	urlcolor=cyan,
}

\def\neweq#1{\begin{equation}\label{#1}}
\def\endeq{\end{equation}}

\begin{document}
\title{\vspace{-0.7cm}{On the evolutionary Navier-Stokes equations in distorted pipes under dynamic and energy-stable outflow boundary conditions}}

\author{Alessio Falocchi -- Ana Leonor Silvestre -- Gianmarco Sperone}
\date{}
\maketitle
\vspace*{-6mm}
\begin{abstract}
	\noindent
	We consider the evolution of a viscous incompressible fluid in three-dimensional distorted pipes, of finite length, modeled through the Navier-Stokes equations with mixed boundary conditions. Specifically, the inflow is given by an arbitrary datum, the outflow is subject to a dynamic condition including a directional do-nothing boundary condition; standard no-slip assumptions are imposed on the remaining walls of the domain. We introduce a new functional framework that accommodates the dynamic boundary condition and the incompressibility constraint. Within this framework, we establish the existence of weak solutions. Moreover, under a smallness assumption on the data, we prove the existence and uniqueness of global strong solutions. 
	The newly introduced functional setting enables us to deal, at first, with the associated Stokes problem, and then with the full Navier-Stokes system, via the Galerkin method implemented  with a basis of eigenfunctions of a suitably modified Stokes operator.
	\par
	\noindent
	{\bf Mathematics Subject Classification:} 35Q30, 35G61, 76D05, 35M13.\par\noindent
	{\bf Keywords:} unsteady Navier-Stokes equations, incompressible flows, mixed boundary conditions, directional do-nothing  boundary conditions, dynamic boundary conditions, pipes.
\end{abstract}

	
\section{Introduction and presentation of the problem}
The analysis of the laminar flow of a viscous incompressible fluid through pipe junctions $\Omega \subset \mathbb{R}^{3}$, which are bounded by rigid and impermeable walls, constitutes a fundamental and widespread topic in Fluid Mechanics \cite{landau}. This area of study is crucial for numerous practical applications, including aerodynamics \cite{von2004aerodynamics}, haemodynamics \cite{galdi2008hemodynamical}, and petroleum engineering \cite{bradley1987petroleum}. From a mathematical perspective \cite{ladyzhenskaya1969mathematical}, such motion is investigated by means of the Navier-Stokes equations in $\Omega$.
Whether $\Omega$ is assumed to be bounded or not depends entirely on the specific physical configuration to be modeled. Real-world engineering applications rarely permit a precise characterization of fluid behaviour at large distances, meaning that their numerical implementation inherently requires a bounded computational domain. Truncating the fluid domain introduces \textit{artificial boundaries} into the pipe system, where appropriate boundary conditions must be prescribed, see \cite{blazy2007artificial, nazarov2008artificial} and the references therein. Selecting the conditions to impose on these artificial outlets is a delicate question from both a mathematical and physical standpoint \cite{heywood1996artificial}. While the inflow is typically prescribed and viscous effects dictate a zero-velocity (no-slip) condition on the solid walls, the fluid behaviour on the outlet remains open. These artificial boundary conditions must guarantee both the well-posedness of the mathematical model and the numerical stability of the simulations; we refer the reader to \cite{braack2014directional,bruneau2000boundary,bruneau1996new,fursikov2009optimal,kravcmar2018modeling,neustupa2022maximum,neustupa2023existence,nogueira2025regularized,nogueira2025steady,sperone2021steady} for further discussions.

In order to introduce such conditions we define the domains we are interested in. 
\begin{definition} \label{addomain3}
	An open bounded set $\Omega \subset \mathbb{R}^{3}$ will be called \textbf{admissible} if $\partial \Omega$ is piecewise of class $\mathcal{C}^{2}$, $\Omega$ is simply connected and $\Omega$ is the union of three disjoint subsets as follows:
	\begin{itemize}
		\item [(1)] In some coordinate system, $\Omega_{1} := \Theta_{1} \times (0,\ell_{1})$, for some $\ell_{1}>0$ and some open, bounded and planar domain $\Theta_{1} \subset \mathbb{R}^{2}$ having boundary of class $\mathcal{C}^{2}$. Therefore, $\Omega_{1}$ is a cylinder of length $\ell_{1}$ and fixed cross-section $\Theta_{1}$;
		\item [(2)] In some coordinate system, $\Omega_{2} := \Theta_{2} \times (0,\ell_{2})$, for some $\ell_{2}>0$ and some open, bounded and planar domain $\Theta_{2} \subset \mathbb{R}^{2}$ having boundary of class $\mathcal{C}^{2}$. Therefore, $\Omega_{2}$ is a cylinder of length $\ell_{2}$ and fixed cross-section $\Theta_{2}$;
		\item [(3)] $\Omega_{0} := \Omega \setminus (\Omega_{1} \cup \Omega_{2})$  (note that $\Omega_{0}$ is not open).
	\end{itemize}
	The boundary of $\Omega$ is decomposed as $ \partial \Omega = \Gamma_{I} \cup \Gamma_{W} \cup \Gamma_{O}$, where
	\begin{equation}\label{boundaryomega3d}
		\begin{aligned}
			& \Gamma_{I} := \Theta_{1} \times \{0\} \quad \text{(in the coordinate system defining $\Omega_{1}$)}  \,, \\[5pt]
			& \Gamma_{O} := \Theta_{2} \times \{\ell_{2}\} \quad \text{(in the coordinate system defining $\Omega_{2}$)} \, ,
		\end{aligned}
	\end{equation}	
	and $\Gamma_{W} \subset \mathbb{R}^{3}$ represents the $\mathcal{C}^{2}$-surface connecting $\Gamma_{I}$ with $\Gamma_{O}$.
\end{definition}
An admissible domain $\Omega \subset \mathbb{R}^{3}$ is the truncation (orthogonal to the symmetry axis of the inlet and outlet) of the domain considered in the celebrated \textbf{Leray problem}, see \cite[Definition 1.1]{amick1977steady}, \cite[Chapter III]{galdi2008hemodynamical} and also \cite[Chapter 3]{pileckas2007navier}. Figure \ref{dom33d} depicts an admissible three-dimensional pipe. Notice that, in Definition \ref{addomain3}, the cylinders $\Omega_{1}$ and $\Omega_{2}$ may possibly have different cross-sections and inclinations. 
\begin{figure}[H]
	\begin{center}
		\includegraphics[scale=0.6]{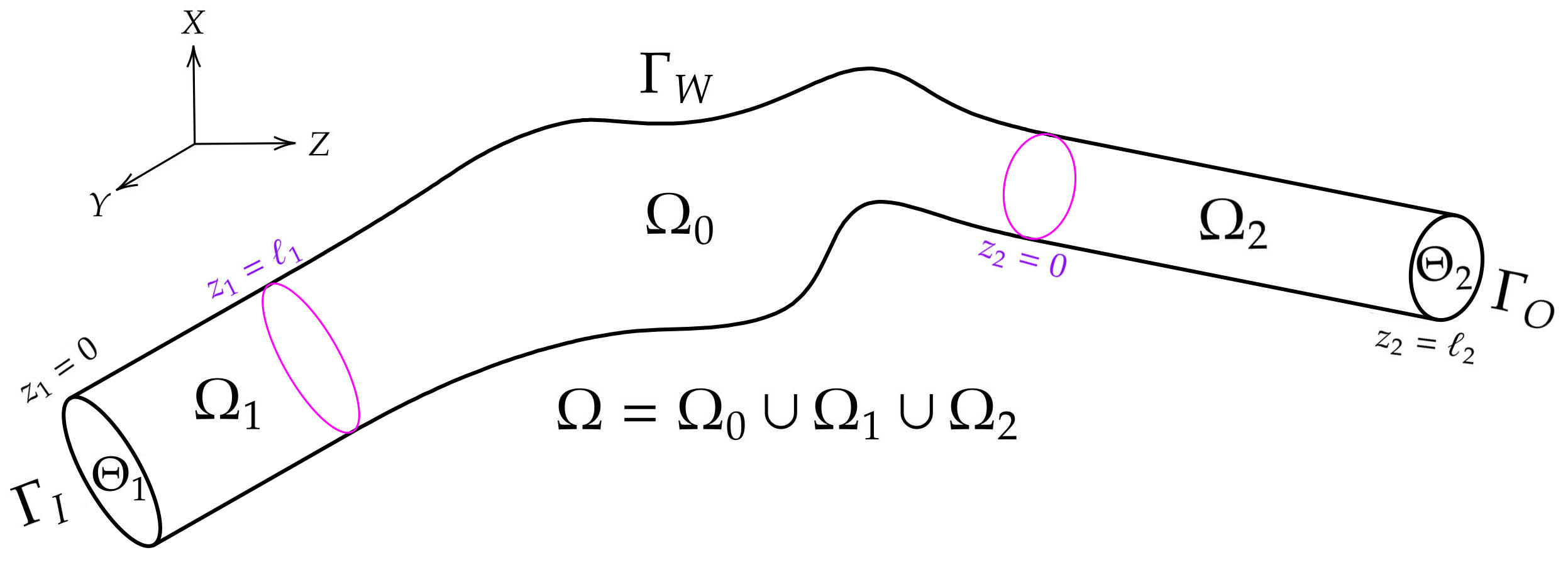}
	\end{center}
	\vspace*{-6mm}
	\caption{An admissible domain $\Omega \subset \mathbb{R}^{3}$.}\label{dom33d}
\end{figure}
\noindent
We denote the outward unit normal to $\partial \Omega$ by $\bs{n} \in \mathbb{R}^{3}$. We will refer to $\Gamma_{I}$ and $\Gamma_{O}$ in \eqref{boundaryomega3d} as the \textit{inlet} and \textit{outlet} of $\Omega$, respectively, while $\Gamma_{W}$ is composed by the \textit{physical walls} of $\Omega$.

Let $T>0$ and $Q_T:=\Omega\times (0,T)$ where $\Omega \subset \mathbb{R}^{3}$ is an admissible domain in the sense of Definition \ref{addomain3}. Given a (constant) kinematic viscosity coefficient $\nu > 0$, an inlet velocity field $\bs{g}_{*} : \Gamma_{I}\times [0,T) \longrightarrow \mathbb{R}^{3}$ and a function $\sigma_{*} : \Gamma_{O}\times (0,T) \longrightarrow \mathbb{R}$, by \textit{reference flow} we mean a sufficiently smooth pair $(\bs{W}_{\! \! *}, \Pi_{*})$ (whose existence will be proved in Lemma \ref{lemma0}) satisfying the following system in $Q_T$:
\begin{equation}\label{stokesintro}
	\left\{
	\begin{aligned}
		& \nabla\cdot \bs{W}_{\! \! *} = 0 \ \ \hspace{16mm}\mbox{ in } \ \ Q_T \, , \\[5pt]
		& \bs{W}_{\! \! *}=\bs{g}_{*}\ \ \hspace{20mm} \mbox{ on } \ \ \Gamma_{I}\times (0,T)  \, , \\[5pt] &\bs{W}_{\! \! *}=\bs{0} \ \ \hspace{22mm}\mbox{ on } \ \ \Gamma_{W}\times (0,T) \, , \\[5pt]
		& \nu \dfrac{\partial \bs{W}_{\! \! *}}{\partial \bs{n}} - \Pi_{*} \, \bs{n} = \sigma_{*} \, \bs{n} \ \ \mbox{ on } \ \ \Gamma_{O}\times (0,T)  \, .
	\end{aligned}
	\right.
\end{equation}

In this paper we deal with the time-dependent Navier-Stokes equations with mixed boundary conditions on the different parts of $\partial \Omega$, that is, the following system of partial differential equations:
\begin{equation}\label{ns}
	\left\{
	\begin{aligned}
		&\dfrac{\partial\bs{v}}{\partial t}-\nu\Delta \bs{v}+(\bs{v}\cdot\nabla)\bs{v}+\nabla p=\bs{f} \, , \quad  \nabla\cdot \bs{v}=0 \ \ \mbox{ in }  \ Q_T \, , \\[5pt]
		& \bs{v}=\bs{g}_{*} \ \ \mbox{ on } \ \ \Gamma_{I}\times (0,T) \, , \\[5pt]
		& \bs{v}=\bs{0} \ \ \hspace{2mm}\mbox{ on } \ \ \Gamma_{W}\times (0,T) \, , \\[5pt]	
		&\dfrac{\partial\bs{v}}{\partial t}+ \nu \dfrac{\partial \bs{v}}{\partial \bs{n}} - p \, \bs{n} + \dfrac{1}{2} [\bs{v} \cdot \bs{n}]^{-}(\bs{v} - \bs{W}_{\! \! *}) = \sigma_{*} \, \bs{n}\ \ \mbox{ on } \ \ \Gamma_{O}\times (0,T) \, , \\[5pt]
		&\bs{v}(\cdot ,0)=\bs{v}_{0,\Omega} \ \ \mbox{ in } \  \Omega \,, \\[5pt]
		&\bs{v}(\cdot,0)=\bs{v}_{0,\Gamma_O} \ \ \mbox{ on } \ \Gamma_O \, ,
	\end{aligned}
	\right.
\end{equation}
where $\bs{v} : \Omega \longrightarrow \mathbb{R}^3$ is the velocity vector field, $p : \Omega \longrightarrow \mathbb{R}$ is the scalar pressure and $\bs{f} : \Omega \longrightarrow \mathbb{R}^3$ represents a given external force acting on the fluid. 
Condition \eqref{ns}$_{2}$ prescribes the inlet velocity on $\Gamma_{I}$, condition \eqref{ns}$_{3}$ describes the no-slip boundary condition on $\Gamma_{W}$, while identity \eqref{ns}$_{4}$ dictates that the fluid flow is subject to a \textit{directional do-nothing-type} (DDN) boundary condition on $\Gamma_{O}$, where 
$$
[z]^{-} := \dfrac{|z | - z}{2} \qquad \forall z \in \mathbb{R} \, ,
$$
denotes the \textit{negative part} of any real number. The initial datum is $\bs{v}_0 := (\bs{v}_{0,\Omega},\bs{v}_{0,\Gamma_O})$, with given functions $\bs{v}_{0,\Omega} : \Omega \longrightarrow \mathbb{R}^3$ and $\bs{v}_{0,\Gamma_O} : \Gamma_O \longrightarrow \mathbb{R}^3$. In order to get compatibility between $\bs{v}_0$ and $\bs{g}_{*}$ at $t=0$, we impose that $\gamma_{\Gamma_I}(\bs{v}_{0,\Omega}) = \bs{g}_{*}(\cdot,0)$, where $\gamma_{\Gamma_I}$ is the restriction to $\Gamma_I$ of the standard trace operator, see also Section \ref{functional}. 

The presence of the time-derivative on the boundary condition \eqref{ns}$_{4}$ is motivated by the classical works of Ka\v{c}ur \cite{kacur} and Solonnikov et al. \cite{bizhanova1994solvability,bizhanova1994some,solonnikov}, among others; in these papers, the Authors
show the existence of classical solutions to several differential systems, focusing on the general theory of parabolic problems
with time-derivatives on the boundary condition. Therefore, we also include $\partial_{t} \bs{v}$ on the outlet $\Gamma_{O}$, highlighting its necessity in order to establish the existence of strong solutions to the evolutionary Navier-Stokes equations under Neumann-type boundary conditions.

Problem \eqref{ns} represents the unsteady version of the stationary system treated in \cite{fss}; for a deeper insight on this problem in dimensionless form, as well as the physical meaning of the DDN boundary conditions, we refer to \cite{fss}. Here we recall the main motivations leading us to consider this Neumann-type condition on the outlet. 
First of all, it originates as a modification of the standard \textit{traction} boundary condition, which reads
\begin{equation}\label{ctbc}
	\nu \dfrac{\partial \bs{u}}{\partial \bs{n}} - p \, \bs{n} = \sigma_* \bs{n} \ \ \mbox{ on } \ \ \Gamma_{O} \, 
\end{equation}
and reduces to the \textit{do-nothing} (homogeneous) boundary condition when $\sigma_* \equiv 0$.
Mathematically, condition \eqref{ctbc} arises naturally from the weak formulation of the Navier-Stokes equations, where an integration by parts in $\Omega$, combined with \eqref{ctbc}, allows to obtain a precise information from the boundary integral on $\Gamma_O$. Since the formulation by Gresho \cite{gresho1991some} in 1991, the boundary condition \eqref{ctbc} has been widely employed in Computational Fluid Dynamics \cite{braack2014directional, heywood1996artificial, john2002higher, lanzendorfer2020multiple, rannacher2012short}. Nevertheless, it is well-known that \eqref{ctbc} is not an energy-stable condition due to the possibility of a backwards flow coming into $\Omega$ from $\Gamma_{O}$, see the works by Kra\v{c}mar \& Neustupa \cite{kravcmar2001weak, kravcmar2018modeling}. This transfer of kinetic energy is strictly related to the nonlinear term $(\bs{u}\cdot\nabla)\bs{u}$, as the following integration by parts shows: 
\begin{equation} \label{introeq1}
	\int_\Omega (\bs{u}\cdot\nabla)\bs{u}\cdot \bs{u}=\dfrac{1}{2}\int_{\Gamma_I}|\bs{g}_*|^2(\bs{g}_*\cdot \bs{n})+\dfrac{1}{2}\int_{\Gamma_O}|\bs{u}|^2(\bs{u}\cdot \bs{n}) \, .
\end{equation}
Here the integral over $\Gamma_{I}$ is controlled by the datum $\bs{g}_{*}$, while the second term can be rewritten as
$$
\dfrac{1}{2}\int_{\Gamma_O}|\bs{u}|^2(\bs{u}\cdot \bs{n})= \dfrac{1}{2}\int_{\Gamma_O}|\bs{u}|^2\left([\bs{u}\cdot \bs{n}]^+-[\bs{u}\cdot\bs{n}]^-\right) \, ,
$$ 
revealing the possibility of backflow on $\Gamma_O$ as soon as $[\bs{u}\cdot\bs{n}]^-\neq 0$. This is the reason why existence results on Navier-Stokes equations under classical do-nothing boundary conditions are known under a smallness assumption on the data (inlet velocity and external force), see \cite[Chapter III]{galdi2008hemodynamical}.
On the other hand, the DDN-type boundary condition has the advantage of being energy-stable, ensuring the uniqueness of the rest state \cite{braack2014directional} and preventing backflow at outflow boundary portions \cite{BCBBG2018,DONG2015300}.
For all these reasons we include it in \eqref{ns}$_4$.

The method we employ to establish the existence of weak and strong solutions to \eqref{ns}, respectively in Theorems \ref{existence}-\ref{strong} (the main results of this paper), is based on the Galerkin approximation, see e.g. \cite{Gal2000a,RRS2016,Tem1977b}. The existence of weak solutions under minimal assumptions on the data follows by standard estimates for the three-dimensional Navier-Stokes equations; in particular, we exploit the DDN boundary condition while handling the nonlinear term. The proof concerning the existence of a (unique) strong solution, under a smallness assumption on the data, in Theorem \ref{strong} is more delicate, due to the presence of a nonhomogeneous datum on the outlet. For this aim, we introduce a specific functional setting, see Subsection \ref{functional}, and we prove some preliminary results on these new functional spaces.
Since the associated Stokes problem has a nonhomogenous datum on $\Gamma_O$ belonging to $L^2(\Gamma_O)$, see \eqref{stokesf}$_3$ in Subsection \ref{substokes}, we expect solutions having $H^{3/2}(\Omega)$-regularity \cite{brown2010mixed}, instead of the usual $H^2(\Omega)$-regularity.
This introduces several non-trivial technical difficulties; in fact, a corresponding 2D regularity result is, to the best of our knowledge, unavailable, preventing us from establishing an analogue of our main findings in the two-dimensional case.
Preparatory results concerning the three-dimensional Stokes operator, which are essential for the Galerkin method in the proof of Theorem \ref{strong}, are presented in Subsection \ref{substokes}. We complete the preliminary part by constructing the time-dependent reference flow $(\bs{W}_{\! \! *}, \Pi_{*})$ in Subsection \ref{reference}.
The main results are stated in Section \ref{main}, while all the proofs are given in Section \ref{proofs}.

We conclude this Introduction by pointing out that the results presented in this manuscript can be easily adapted and extended to the case when the admissible domain $\Omega \subset \mathbb{R}^3$ has several cylindrical inlets and outlets. An example of such junction of pipes is depicted in Figure \ref{polpo2d}.

\begin{figure}[H]
	\begin{center}
		\includegraphics[scale=0.43]{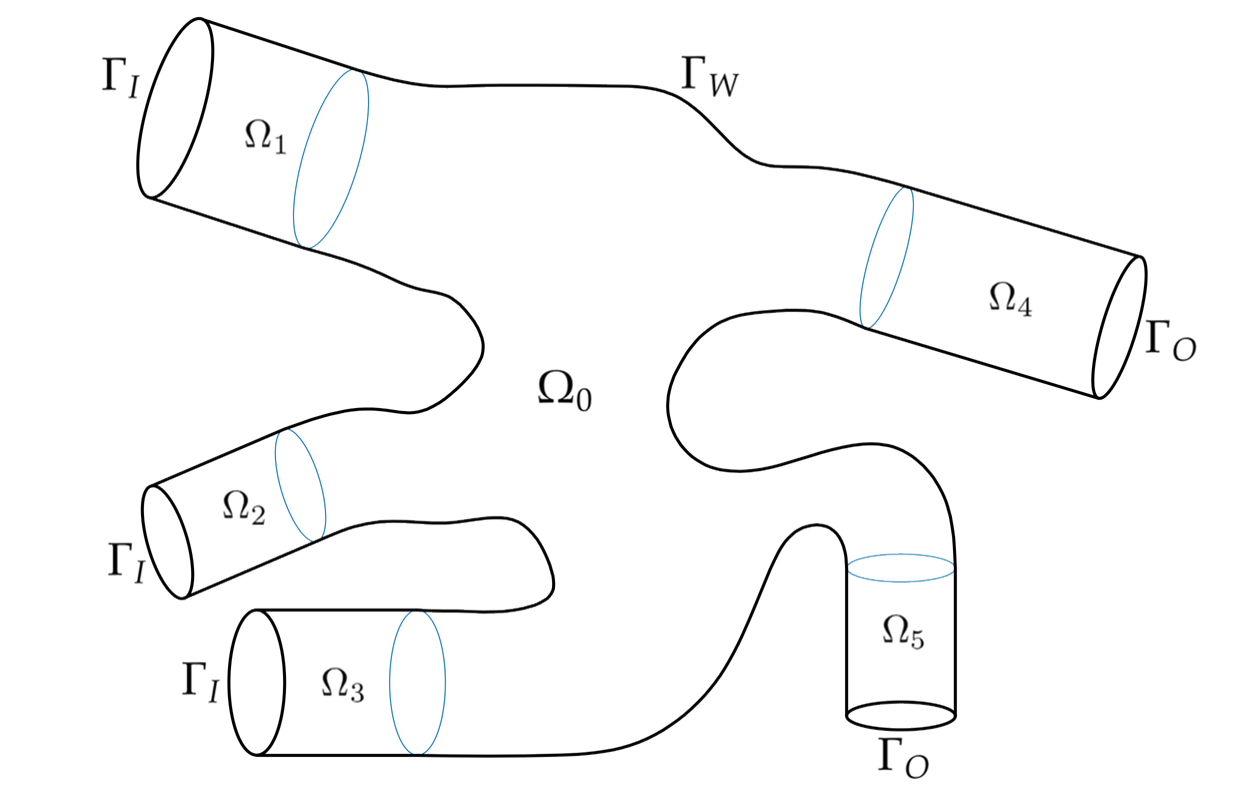}
	\end{center}
	\vspace*{-6mm}
	\caption{Representation of an admissible domain $\Omega \subset \mathbb{R}^{3}$ with several inlets and outlets.}\label{polpo2d}
\end{figure}

\section{Preliminary results}
\subsection{Functional setting}\label{functional}
Given a bounded Lipschitz domain $D \subset \mathbb{R}^n$, we denote by: 
\begin{itemize}[leftmargin=5mm]
	\item[$\bullet$] $\mathcal{C}(D)$ the space of continuous scalar functions, $\mathcal{C}^\infty(D)$ the space of infinitely differentiable scalar functions and  $\mathcal{C}^\infty_0(D)$ the space of	$\mathcal{C}^\infty(D)$  functions having compact support in $D$. We shall also use, for vector fields, the symbol $\mathcal{C}^\infty_{0,\sigma}(D)$, meaning that each component belongs to   $\mathcal{C}_0^\infty(D)$ and the vector field is divergence free in $D$.
	\item[$\bullet$] $L^p(D)$, for $1\leq p \leq\infty$, the usual Lebesgue spaces for scalar functions, equipped
	with norm $\|\cdot\|_{L^p(D)}$. Additionally, we introduce the space of scalar functions with zero mean value in $D$, defined as
	\begin{equation} \label{l02}
		L^2_0(D) := \left\{ h\in L^2(D) \ \Big | \ \int_{D} h = 0 \right\} \, .
	\end{equation}
	
	\item[$\bullet$] $W^{s,p}(D)$ the Sobolev spaces, equipped
	with norm $\|\cdot\|_{W^{s,p}(D)}$ for $s\geq 0$ and $p \in [1,\infty]$; when $p=2$ we shall simply write $H^s(D)$ with $L^2(D)=H^0(D)$.
	
	\item[$\bullet$] $H^{-s}(D)$ the dual space of $H^{s}_{0}(D)$, for $s \geq 1$, with the corresponding duality pair $\langle\cdot,\cdot\rangle_{H^{-s}(D),H^s_0(D)}$  and dual norm $\|\cdot\|_{H^{-s}(D)}$.

	\item[$\bullet$] $W^{k - \frac{1}{p},p}(\partial D)$ the usual trace spaces, with any integer $k\geq 1$ and $p \in [1,\infty)$; when $p=2$ we shall simply write $H^{k- \frac{1}{2}}(\partial D)$. We also consider $H^{-1/2}(\partial D)$ as the dual space of $H^{1/2}(\partial D)$ with the corresponding duality pair $\langle\cdot,\cdot\rangle_{H^{-1/2}(\partial D),H^{1/2}(\partial D)}$  and dual norm $\|\cdot\|_{H^{-1/2}(\partial D)}$.
\end{itemize}

Given an integer $k \geq 1$ and a relatively open portion $\Gamma \subset \partial D$ with $|\Gamma| >0$, the \textit{Lions-Magenes spaces} \cite[Chapter VII]{dautray1999mathematical} are defined as
$$
H^{k- \frac{1}{2}}_{00}(\Gamma) := \left\{ u \in L^2(\Gamma) \  \bigg| \  \widetilde{u}  \in  H^{k- \frac{1}{2}}(\partial D) \right\},
$$
where $\widetilde{u}$ denotes the zero extension of $u \in L^2(\Gamma)$ to the whole boundary $\partial D$. The dual space of $H^{\frac{1}{2}}_{00}(\Gamma)$ is denoted by $H^{-\frac{1}{2}}(\Gamma) $ and endowed with the norm induced from $H^{- \frac{1}{2}}(\partial D)$.

When dealing with vector functions of $n$-components, we use the space notation $V^n$, meaning that each component of the vector is in the space $V$, e.g. $\bs{v}\in H^k(D)^n$ with $\|\bs{v}\|_{H^k(D)}$ for any $k \in \mathbb{N}$; observe that in the norm notation we do not specify the dimension $n$. The usual  inner product of $L^2(D)^3$ is $$
(\bs{v}, \bs{w})_{D}:= \int_{D} \bs{v} \cdot \bs{w}\qquad \forall \bs{v},\bs{w}\in L^2(D)^3.$$

Let  $\Omega \subset \mathbb{R}^{3}$ be an admissible domain in the sense of Definition \ref{addomain3}. In the context of \eqref{ns}, we need a functional setting appropriate to handle the dynamic boundary conditions on $\Gamma_O$ and the divergence-free condition for the velocity in $\Omega$. We begin by introducing the Hilbert space
$$
{\mathcal L}^2 := L^2(\Omega)^3 \times L^2(\Gamma_O)^3 \, ,
$$
consisting of all pairs $\bs{v}:=(\bs{v}_\Omega,\bs{v}_{\Gamma_O}) \in L^2(\Omega)^3 \times L^2(\Gamma_O)^3$, endowed with the scalar product and norm
\begin{equation} \label{dirscal0}
	(\bs{v}, \bs{w})_{\mathcal L^2} := \int_{\Omega} \bs{v}_\Omega \cdot \bs{w}_\Omega + 
	\int_{\Gamma_O} \bs{v}_{\Gamma_O} \cdot \bs{w}_{\Gamma_O},\quad \|\bs{v}\|_{{\mathcal L}^2}:=\left(\|\bs{v}_\Omega\|_{L^2(\Omega)}^2+\|\bs{v}_{\Gamma_O}\|_{L^2(\Gamma_O)}^2\right)^{\frac{1}{2}}\quad \forall \bs{v},\bs{w}\in\mathcal{L}^2.
\end{equation}
We note that if $\bs{v}\in H^1(\Omega)^3$ then $\bs{v}\in \mathcal{L}^2$ because $\bs{v}_{\Gamma_O}$ coincides with the trace of $\bs{v}_\Omega$ on $\Gamma_O$.
We introduce the linear mapping $\mathcal{J} : H^{1}(\Omega)^3 \to {\mathcal L}^2$ through the expression
\[
\mathcal{J}(\bs{v}) = (\bs{v}, \gamma_{\Gamma_O}(\bs{v})) \qquad \forall\, \bs{v} \in H^{1}(\Omega)^3,
\]
where $\gamma_{\Gamma_O} : H^{1}(\Omega)^3 \to L^2(\Gamma_O)^3$ represents the restriction to $\Gamma_O$ of the standard trace operator defined on $H^{1}(\Omega)^3$. Since the mapping $\mathcal{J} : H^{1}(\Omega)^3 \to \mathcal{J}(H^{1}(\Omega)^3)$ is certainly injective, from now on we shall identify $H^{1}(\Omega)^3$ (or any subspace of it) with $\mathcal{J}(H^{1}(\Omega)^3)$.

Let
$$
\begin{aligned}
{\mathcal D}:=&\left\{\bs{v}\in \mathcal{C}^\infty(\overline\Omega)^3 \mid  \nabla\cdot \bs{v}=0,\ \ \text{supp}(\bs{v})\cap\overline{\Gamma_W}=\emptyset \right\} \, ,\\[5pt]
{\mathcal D}_*:=& \left\{\bs{v}\in \mathcal{C}^\infty(\overline\Omega)^3 \mid \nabla\cdot \bs{v}=0,\ \ \text{supp} (\bs{v})\cap(\overline{\Gamma_I\cup\Gamma_W})=\emptyset \right\}.
\end{aligned}
$$ 
By taking the closure of the sets ${\mathcal D}$ and ${\mathcal D}_*$ in the topology of $H^1(\Omega)^3$, we obtain the following spaces:
\begin{equation}\label{funcspacesH2}
	\begin{aligned}
		\mathcal{V} :=& \left\lbrace \bs{v} \in H^1(\Omega)^3 \mid  \nabla \cdot \bs{v}=0 \ \ \mbox{in} \ \ \Omega \, , \quad \bs{v}= \bs{0} \ \ \mbox{on} \ \ \Gamma_{W} \right\rbrace \,,  \\[5pt]
		\mathcal{V}_{*} :=& \left\lbrace \bs{v} \in H^1(\Omega)^3 \mid  \nabla \cdot \bs{v}=0 \ \ \mbox{in} \ \ \Omega \, , \quad  \bs{v} = \bs{0} \ \ \mbox{on} \ \ \Gamma_{I} \cup \Gamma_{W} \right\rbrace \, .
	\end{aligned}
\end{equation}
Endowing them with the scalar product
\begin{equation} \label{dirscal}
	(\nabla \bs{v}, \nabla \bs{w})_{\Omega} := \int_{\Omega} \nabla \bs{v} : \nabla \bs{w}\, \qquad \forall \bs{v},\bs{w}\in \VV,
\end{equation}
they become Hilbert spaces.  In particular, for elements of $\mathcal{V}$ and $\mathcal{V}_{*} $, we shall often write $\bs{v}$ instead of $(\bs{v}_\Omega,\bs{v}_{\Gamma_O}) = \big(\bs{v}_\Omega, \gamma_{\Gamma_O}({\bs{v}_\Omega})\big)$.

We introduce the sets $\mathcal{H}$ and $\mathcal{H}_*$ by taking the closure, in the ${\LL^2}$-norm, of ${\mathcal D}$ and ${\mathcal D}_*$, respectively. 
The following characterization of $\mathcal{H}$ and $\mathcal{H}_*$ will be proved in Section \ref{prooflemmaH}.
\begin{lemma}
	\label{lemaH}
	Let  $\Omega \subset \mathbb{R}^{3}$ be an admissible domain. The following characterization holds:
	\begin{equation}\label{funcspacesH}
		\begin{split}
			\mathcal{H} =& \left\lbrace \bs{v} \in {\LL^2}  \mid  \nabla \cdot \bs{v}_\Omega=0 \ \ \mbox{in} \ \ \Omega \, , \ \ \ \bs{v}_\Omega\cdot \bs{n} = 0 \ \ \mbox{on} \ \ \Gamma_{W} \, \ \ \ \bs{v}_\Omega\cdot \bs{n}=\bs{v}_{\Gamma_O}\cdot \bs{n}\ \ \mbox{on} \ \ \Gamma_{O} \right\rbrace \,,  \\[6pt]
			\mathcal{H}_{*} =& \left\lbrace \bs{v} \in {\LL^2} \mid  \nabla \cdot \bs{v}_\Omega=0 \ \  \mbox{in} \ \  \Omega \, , \ \ \  \bs{v}_\Omega\cdot\bs{n} = 0 \ \ \mbox{on} \ \ \Gamma_{I} \cup \Gamma_{W}\, , \ \ \ \bs{v}_\Omega\cdot \bs{n}=\bs{v}_{\Gamma_O}\cdot \bs{n} \ \ \mbox{on} \ \  \Gamma_{O} \right\rbrace \, ,
		\end{split}
	\end{equation}
	where  $\bs{v}_\Omega\cdot\bs{n}$ denotes the normal trace of $\bs{v}_\Omega$ on $\partial \Omega$ which, as the divergence operator, must be understood in distributional sense.
\end{lemma}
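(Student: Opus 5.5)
The plan is to prove the two inclusions separately. I write $X$ (resp. $X_*$) for the right-hand side in \eqref{funcspacesH} defining $\mathcal{H}$ (resp. $\mathcal{H}_*$), and I give the argument for $\mathcal{H}$; the one for $\mathcal{H}_*$ is identical up to the changes indicated at the end.

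\emph{Inclusion $\mathcal{H}\subset X$.} Every $\bs{v}\in\mathcal{D}$, identified with $(\bs{v},\gamma_{\Gamma_O}\bs{v})$, clearly satisfies the three conditions defining $X$, so $\mathcal{D}\subset X$. It then suffices to show that $X$ is closed in $\mathcal{L}^2$. If $\bs{v}^k\to\bs{v}$ in $\mathcal{L}^2$ with $\nabla\cdot\bs{v}^k_\Omega=0$, then $\nabla\cdot\bs{v}_\Omega=0$ in $\mathcal{D}'(\Omega)$. The normal trace map $E(\Omega):=\{\bs{u}\in L^2(\Omega)^3 \mid \nabla\cdot\bs{u}\in L^2(\Omega)\}\to H^{-1/2}(\partial\Omega)$ is continuous, so $\bs{v}^k_\Omega\cdot\bs{n}\to\bs{v}_\Omega\cdot\bs{n}$ in $H^{-1/2}(\partial\Omega)$. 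The identities $\bs{v}_\Omega\cdot\bs{n}=0$ on $\Gamma_W$ and $\bs{v}_\Omega\cdot\bs{n}=\bs{v}_{\Gamma_O}\cdot\bs{n}$ on $\Gamma_O$ are meant when tested against functions of $H^{1/2}_{00}$ on the respective portions, and both pass to the limit, because $\bs{v}^k_{\Gamma_O}\cdot\bs{n}\to\bs{v}_{\Gamma_O}\cdot\bs{n}$ in $L^2(\Gamma_O)$.

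\emph{Inclusion $X\subset\mathcal{H}$.} Since $X$ is a closed subspace of $\mathcal{L}^2$ containing $\mathcal{D}$, I show that any $\bs{w}\in X$ with $(\bs{w},\bs{v})_{\mathcal{L}^2}=0$ for all $\bs{v}\in\mathcal{D}$ vanishes.

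\begin{enumerate}
\item Testing with $\bs{v}\in\mathcal{C}^\infty_{0,\sigma}(\Omega)\subset\mathcal{D}$ and applying de Rham's theorem gives $\bs{w}_\Omega=\nabla q$ with $q\in H^1(\Omega)$. Since $\nabla\cdot\bs{w}_\Omega=0$, $q$ is harmonic, and $\partial_{\bs n}q=0$ on $\Gamma_W$.

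\item For a general $\bs{v}\in\mathcal{D}$, the generalized Green formula gives
$$0=\int_\Omega\nabla q\cdot\bs{v}+\int_{\Gamma_O}\bs{w}_{\Gamma_O}\cdot\bs{v}=\int_{\Gamma_I\cup\Gamma_O}q\,(\bs{v}\cdot\bs{n})+\int_{\Gamma_O}\bs{w}_{\Gamma_O}\cdot\bs{v}.$$

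\item Choosing $\bs{v}\in\mathcal{D}$ with $\bs{v}\cdot\bs{n}=0$ on $\partial\Omega$ and arbitrary tangential trace compactly supported in $\Gamma_O$ shows that the tangential part of $\bs{w}_{\Gamma_O}$ vanishes. Hence $\bs{w}_{\Gamma_O}=(\bs{w}_\Omega\cdot\bs{n})\bs{n}=(\partial_{\bs n}q)\bs{n}$, and in particular $\partial_{\bs n}q\in L^2(\Gamma_O)$.

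\item Next I let $\bs{v}\cdot\bs{n}$ range over all smooth functions compactly supported in $\Gamma_I\cup\Gamma_O$ with zero total flux. This gives constants with $q=c$ on $\Gamma_I$ and $q+\partial_{\bs n}q=c$ on $\Gamma_O$, and the zero-flux constraint forces the two constants to coincide.

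\item Set $\tilde q:=q-c$. Then $\tilde q$ is harmonic, $\tilde q=0$ on $\Gamma_I$ and $\partial_{\bs n}\tilde q=0$ on $\Gamma_W$, so
$$\int_\Omega|\nabla q|^2=\int_{\Gamma_O}\tilde q\,\partial_{\bs n}q=-\int_{\Gamma_O}|\partial_{\bs n}q|^2 .$$
Hence $\nabla q=0$, so $\bs{w}_\Omega=\bs{0}$ and $\bs{w}_{\Gamma_O}=\bs{0}$.
\end{enumerate}

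\emph{Changes for $\mathcal{H}_*$.} Test fields in $\mathcal{D}_*$ vanish near $\Gamma_I$, so instead of $q=c$ on $\Gamma_I$ one uses $\partial_{\bs n}q=\bs{w}_\Omega\cdot\bs{n}=0$ there, which holds by the definition of $X_*$. Step 4 then only yields $q+\partial_{\bs n}q=c$ on $\Gamma_O$. Since $\partial_{\bs n}q=0$ on $\Gamma_I\cup\Gamma_W$ and $q$ is harmonic, the flux identity gives $\int_{\Gamma_O}\partial_{\bs n}q=0$. Therefore
$$\int_\Omega|\nabla q|^2=\int_{\Gamma_O}q\,\partial_{\bs n}q=\int_{\Gamma_O}(c-\partial_{\bs n}q)\,\partial_{\bs n}q=-\int_{\Gamma_O}|\partial_{\bs n}q|^2,$$
and the conclusion is the same.

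\emph{Main obstacle.} I expect the delicate part to be the construction of test fields in $\mathcal{D}$ (resp. $\mathcal{D}_*$) with prescribed smooth boundary data on $\Gamma_I\cup\Gamma_O$ (zero total flux, or purely tangential), while being divergence-free and vanishing near $\overline{\Gamma_W}$. I would obtain them with a Bogovskii-type correction (or a solenoidal extension in the cylindrical parts $\Omega_1,\Omega_2$), using that $\Omega$ is simply connected with the structure of Definition \ref{addomain3}. A second technical point is making the Green formula rigorous with only $H^{-1/2}$ normal traces; this works here because $\partial_{\bs n}q\in L^2(\Gamma_O)$ and the boundary data of the test fields are smooth and compactly supported in the open portions.
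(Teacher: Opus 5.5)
Your proposal is correct and follows essentially the paper's route. Both arguments first show that the candidate space is closed. Both then show that an element orthogonal to $\mathcal{D}$ (resp.\ $\mathcal{D}_*$) is a gradient $\nabla q$ whose $\Gamma_O$-component is normal and satisfies the Robin-type condition $q+\partial_{\bs n}q=c$ on $\Gamma_O$, which an energy identity forces to vanish. The differences are only technical: you get closedness from continuity of the normal trace on $\{\bs u\in L^2(\Omega)^3 : \nabla\cdot\bs u\in L^2(\Omega)\}$ and identify $\bs w_{\Gamma_O}$ with explicitly constructed smooth test fields (Bogovskii correction), whereas the paper passes by density to $\mathcal{V}_*$ and lifts traces in $H^{1/2}_{00}(\Gamma_O)^3\cap L^2_{\bs n,0}(\Gamma_O)^3$; you also write out the $\mathcal{H}$ case (using $q=c$ on $\Gamma_I$), which the paper only declares analogous to $\mathcal{H}_*$.
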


\begin{remark} \label{compactemb}
	Recalling that the embeddings $H^{1}(\Omega)^{3} \subset L^{2}(\Omega)^{3}$ and $H^{1}(\Omega)^{3} \subset L^{2}(\Gamma_{O})^{3}$, via the trace operator $\gamma_{\Gamma_O}$, are compact (see, for example, \cite[Theorem 6.2]{necas2011direct}), it follows that the inclusions $\mathcal{V} \subset \mathcal{H}$ and $\mathcal{V}_{*} \subset \mathcal{H}_{*}$ are also compact. 
\end{remark}

\begin{remark}
	Since $\bs{v}_{\Gamma_O}\cdot \bs{n}  \in L^2(\Gamma_O)$, we also have $\bs{v}_{\Omega}\cdot \bs{n}  \in L^2(\Gamma_O)$, for every $\bs{v} \in \mathcal{H} $.
\end{remark}

In view of Lemma \ref{lemaH}, an application of the generalized Stokes formula \cite[Theorem 1.2]{Tem1977b} furnishes
\begin{equation} \label{weakdiv}
\begin{aligned}
0 = \int_{\Omega} \psi (\nabla \cdot \bs{v}_{\Omega} ) & = \langle \bs{v}_{\Omega} \cdot \bs{n}, \psi \rangle_{H^{-1/2}(\partial \Omega), \, H^{1/2}(\partial \Omega)} - \int_{\Omega} \bs{v}_{\Omega} \cdot \nabla \psi \\[6pt]
& = \int_{\Gamma_O} \psi (\bs{v}_{\Gamma_{O}} \cdot \bs{n}) - \int_{\Omega} \bs{v}_{\Omega} \cdot \nabla \psi \qquad \forall \bs{v} \in \mathcal{H}_{*} \, , \ \ \forall \psi \in H^{1}(\Omega) \, ,
\end{aligned}
\end{equation}
yielding, in particular,
$$
\int_{\Gamma_O} \bs{v}_{\Gamma_{O}} \cdot \bs{n} = 0 \qquad \forall \bs{v} \in \mathcal{H}_{*} \, .
$$
This motivates the introduction of the set
\begin{equation} \label{l2zero}
	L^2_{\bs{n},0}(\Gamma_O)^3 := \left\{ \bs{v} \in L^2(\Gamma_O)^3  \ \Big | \ \int_{\Gamma_O} \bs{v} \cdot \bs{n} = 0 \right\} \, ,
\end{equation}
representing the closed subspace of $L^2(\Gamma_O)^{3}$ comprising vector fields which are orthogonal to $\bs{n}$ on $\Gamma_O$. Therefore, we have the following orthogonal decomposition:
\begin{equation} 
	\label{decL1O}
	L^2(\Gamma_O)^{3} = L^{2}_{0, \bs{n}}(\Gamma_{O})^3 \oplus \mathbb{R} \bs{n} \, .
\end{equation}
In fact, given any vector field $\bs{\varphi} \in L^2(\Gamma_O)^{3}$, define
$$
C_{\bs{\varphi}} := \dfrac{1}{| \Gamma_{O} |} \int_{\Gamma_O} \bs{\varphi} \cdot \bs{n} \, ,
$$
and observe that $\bs{\varphi} - C_{\bs{\varphi}} \, \bs{n} \in L^{2}_{0, \bs{n}}(\Gamma_{O})^3$.

Now,  consider the set
$$
\mathcal{G}_{*} := \left\lbrace \bs{v} \in {\mathcal L}^2 \mid \exists p\in H^1(\Omega) \ :\ \bs{v}_\Omega=\nabla p \ \ \mbox{in} \ \ \Omega \, , \quad  \bs{v}_{\Gamma_O} = -p\bs{n} \ \ \mbox{on} \ \ \Gamma_{O} \right\rbrace . 
$$
In Section \ref{prooflemma00} we prove the following result.
\begin{lemma}\label{leraydecomp}
	Let  $\Omega \subset \mathbb{R}^{3}$ be an admissible domain. The following characterization holds: 
	$$
	{\HH_*}^\perp = \mathcal{G}_{*}.
	$$
	This means that the space ${\mathcal L}^2$ admits the decomposition (with respect to the inner product \eqref{dirscal0})
	$$
	{\mathcal L}^2  = \HH_*\oplus\mathcal{G}_{*}.
	$$
\end{lemma}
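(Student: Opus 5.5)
We prove the two inclusions $\mathcal{G}_*\subset{\HH_*}^\perp$ and ${\HH_*}^\perp\subset\mathcal{G}_*$, and then also check that $\mathcal{G}_*$ is closed in $\LL^2$. The decomposition $\LL^2=\HH_*\oplus\mathcal{G}_*$ then follows from the projection theorem, since $\HH_*$ is closed by definition.

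\emph{Inclusion $\mathcal{G}_*\subset{\HH_*}^\perp$.} Take $\bs{w}=(\nabla p,-p\bs{n})\in\mathcal{G}_*$ with $p\in H^1(\Omega)$, and let $\bs{v}\in\HH_*$. By the characterization in Lemma \ref{lemaH} and the generalized Stokes formula \eqref{weakdiv} with $\psi=p$,
$$
(\bs{v},\bs{w})_{\LL^2}=\int_\Omega \bs{v}_\Omega\cdot\nabla p-\int_{\Gamma_O} p\,(\bs{v}_{\Gamma_O}\cdot\bs{n})=0 .
$$
Alternatively, one can verify this directly for $\bs{v}\in\mathcal{D}_*$ by the divergence theorem, using that $\bs{v}$ vanishes near $\Gamma_I\cup\Gamma_W$, and then pass to the $\LL^2$-closure.

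\emph{Inclusion ${\HH_*}^\perp\subset\mathcal{G}_*$.} Let $\bs{w}=(\bs{w}_\Omega,\bs{w}_{\Gamma_O})\perp\HH_*$. First, every $\bs{\varphi}\in\mathcal{C}^\infty_{0,\sigma}(\Omega)$ gives $(\bs{\varphi},\bs{0})\in\mathcal{D}_*$, so $\bs{w}_\Omega$ annihilates solenoidal test fields. De Rham's theorem, in the $L^2$ version (Temam, Prop.~I.1.1--1.2, valid for Lipschitz bounded domains), yields $\bs{w}_\Omega=\nabla p$ with $p\in L^2(\Omega)$ and $\nabla p\in L^2$, hence $p\in H^1(\Omega)$. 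The function $p$ is determined up to an additive constant $c$.

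It remains to match the boundary component. For any $\bs{v}\in\mathcal{D}_*$, orthogonality together with \eqref{weakdiv} applied to $\psi=p$ gives
$$
\int_{\Gamma_O}(\bs{w}_{\Gamma_O}+p\bs{n})\cdot\bs{v}=0 .
$$
The key step is to show that the traces $\{\bs{v}|_{\Gamma_O}:\bs{v}\in\mathcal{D}_*\}$ are dense in $L^2_{\bs{n},0}(\Gamma_O)^3$. I would argue this as follows. Given $\bs{h}\in\mathcal{C}^\infty_0(\Gamma_O)^3$ with $\int_{\Gamma_O}\bs{h}\cdot\bs{n}=0$, extend it to a smooth field supported near $\Gamma_O$ inside the cylinder $\Omega_2$. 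The flux condition then lets one correct the divergence with a Bogovskii-type operator whose output vanishes on the boundary, for instance on a subdomain of $\Omega_2$ away from $\Gamma_O$. This produces an element of $\mathcal{D}_*$, or of its $H^1$ closure, which suffices by continuity of the trace, with prescribed trace $\bs{h}$ on $\Gamma_O$. The cylindrical structure of $\Omega_2$ near the outlet makes the extension explicit, e.g.\ $\bs{h}(x')\chi(x_3)$ in local coordinates.

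Density then gives $\bs{w}_{\Gamma_O}+p\bs{n}\in \big(L^2_{\bs{n},0}\big)^\perp=\mathbb{R}\bs{n}$ by \eqref{decL1O}, that is, $\bs{w}_{\Gamma_O}=-(p+c)\bs{n}$ for some constant $c$. Replacing $p$ by $p+c$ shows $\bs{w}\in\mathcal{G}_*$.

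\emph{Closedness of $\mathcal{G}_*$.} This is automatic, since $\mathcal{G}_*={\HH_*}^\perp$ once both inclusions hold. Still, it is worth noting that $p$ is uniquely determined by $\bs{w}$: the boundary component fixes the constant.

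The main obstacle is the density of outlet traces, i.e.\ the construction of divergence-free smooth fields vanishing near $\Gamma_I\cup\Gamma_W$ with arbitrary zero-flux trace on $\Gamma_O$. The delicate point is keeping the support away from $\overline{\Gamma_W}$, including near the edge $\partial\Gamma_O$, which is why the approximation uses $\bs{h}$ compactly supported in $\Gamma_O$.
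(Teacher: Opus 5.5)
Your proof is correct. The first inclusion and the identification $\bs{w}_\Omega=\nabla p$ match the paper, but you match the boundary component by a different mechanism.

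The paper uses the characterization of Lemma \ref{lemaH}: an element of $\HH_*$ ties $\bs{v}_{\Gamma_O}$ to $\bs{v}_\Omega$ only through the normal component. So for any $\bs{\psi}\in L^2_{\bs{n},0}(\Gamma_O)^3$, the pair $(\nabla u,\bs{\psi})$ already lies in $\HH_*$, where $u$ is harmonic, $\partial_{\bs n}u=0$ on $\Gamma_I\cup\Gamma_W$ and $\partial_{\bs n}u=\bs\psi\cdot\bs n$ on $\Gamma_O$. Testing with this pair gives \eqref{psinu0} for all such $\bs\psi$ at once, using only a Neumann problem and no approximation.

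You test only against $\mathcal{D}_*$, where the outlet component is a genuine trace, so you need density of these traces in $L^2_{\bs{n},0}(\Gamma_O)^3$. This makes your argument independent of Lemma \ref{lemaH}: you only use $\HH_*^\perp=\mathcal{D}_*^\perp$. The price is redoing a lifting construction that the paper has absorbed into the proof of Lemma \ref{lemaH}, where exactly this trace lifting into $\VV_*$ and the density of $H^{1/2}_{00}$ in $L^2$ are used.

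One detail of your lifting needs repair. For $\bs h(x')\chi(x_3)$ the divergence is $\chi\,\nabla'\cdot\bs h'+\chi' h_3$, which does not vanish near $\Gamma_O$ (where $\chi=1$). A Bogovskii correction on a subdomain away from $\Gamma_O$ therefore cannot remove it. Either of the following fixes works.

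\emph{Slab adjacent to the outlet.} Run Bogovskii on a slab touching $\Gamma_O$, e.g.\ $\Theta_2\times(\ell_2-\delta,\ell_2)$ in local coordinates. Its $H^1_0$ output leaves the trace on $\Gamma_O$ unchanged. You then land in $\VV_*$ and conclude by $H^1$-density of $\mathcal{D}_*$ and continuity of the trace.

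\emph{Divergence-free extension.} Start instead from $\bs H:=\big(\bs h'(x'),\,h_3(x')+(\ell_2-x_3)\nabla'\cdot\bs h'(x')\big)$, which is divergence-free and has trace $\bs h$ on $\Gamma_O$. After multiplying by $\chi$, the divergence is $\chi' H_3$: smooth, compactly supported away from $\Gamma_O$, and of zero mean by the flux condition. Bogovskii then gives a $\mathcal{C}^\infty_0$ correction, and you obtain an element of $\mathcal{D}_*$ directly.
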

We conclude this section by introducing the function space 
\begin{equation}\label{U}
	\mathcal {U}_*:=\left\{ \bs{v} \in H^1(\Omega)^3 \mid  \bs{v} = \bs{0} \ \ \mbox{on} \ \ \Gamma_{I} \cup \Gamma_{W} \right\}, 
\end{equation}
its dual $\mathcal{U}'_*$ and $\langle\cdot,\cdot\rangle_{*}:=\langle\cdot,\cdot\rangle_{\mathcal{U}'_*,\,\mathcal{U}_{*}}$ as the corresponding duality product.
This space will be useful throughout the paper. This is because, when we consider an external force $\bs{f} \in L^2(0,T;H^{-1}(\Omega)^3)$,  it should be supported in $\Omega$; otherwise, the boundary condition on $\Gamma_{O}$ would be affected by $\bs{f}$. Therefore, we define its action on test functions $\bs{\varphi} \in {\mathcal V}_*$ through
\begin{equation}\label{fomega}
	\langle\boldsymbol{f}(t),\boldsymbol{\varphi}\rangle_{*}  = \langle \boldsymbol{f}(t) , \boldsymbol{\varphi} \rangle : = \langle \boldsymbol{f}(t)   ,  P_{H^{1}_{0}(\Omega)} (\bs{\varphi}) \rangle_{H^{-1}(\Omega),H^{1}_{0}(\Omega)},
\end{equation}
where $P_{H^{1}_{0}(\Omega)}$ is the projection of $\mathcal{U}_*$ onto $H^{1}_{0}(\Omega)^3$. 
\subsection{The associated Stokes problem under mixed boundary conditions}\label{substokes}
The associated Stokes problem with mixed boundary conditions in $\Omega$ is: 
\begin{equation}\label{stokesf}
	\left\{
	\begin{array}{ll}
		-\Delta  \bs{u}+\nabla \pi=\bs{f}_\Omega \, , \quad  \nabla\cdot \bs{u}=0 \ \ &\mbox{ in } \  \Omega \, , \\[5pt]
		\bs{u}=\bs{0} \ \ &\mbox{ on } \  \Gamma_{I}\cup\Gamma_{W}  \, , \\[5pt]
		\dfrac{\partial  \bs{u}}{\partial \bs{n}} - \pi \bs{n}  =\bs{f}_{\Gamma_O}\ \ &\mbox{ on } \  \Gamma_{O} \, ,
	\end{array}
	\right.
\end{equation}
where $\bs{f}=(\bs{f}_\Omega,\bs{f}_{\Gamma_O})\in \mathcal L^2$ is a given force with components acting in $\Omega$ and $\Gamma_O$, respectively. A pair $(\bs{u},\pi)\in \VV_*\times L^2(\Omega)$ is a \textit{weak solution} of \eqref{stokesf} if $\bs{u}$ satisfies
\begin{equation}\label{weak:stokesf}
	\begin{aligned}
		&\big(\nabla \bs{u}, \nabla \bs{\varphi}\big)_\Omega 
		= \big( \bs{f} ,\bs{\varphi}\big)_{\mathcal L^2}\qquad \forall \bs{\varphi} \in \mathcal{V}_{*} \,
	\end{aligned}
\end{equation}
and $(\bs{u},\pi)$ satisfy \eqref{stokesf}$_1$ in the sense of distributions, that is,
\begin{equation}
\big(\nabla \bs{u}, \nabla \bs{\varphi}\big)_\Omega +\big(\pi, \nabla \cdot\bs{\varphi}\big)_\Omega 
= \big( \bs{f} ,\bs{\varphi}\big)_{\mathcal L^2}\qquad \forall \bs{\varphi} \in \mathcal{U}_{*} \, .
\label{weak:stokesf2}
\end{equation}
For any $\bs{f}=(\bs{f}_\Omega,\bs{f}_{\Gamma_O})\in \mathcal L^2$, existence of a unique weak solution to system \eqref{stokesf} follows directly from the Riesz Representation Theorem, as well as standard results concerning representation of bounded functionals vanishing on subspaces of $H^{1}(\Omega)^3$ comprising divergence-free vector fields, see \cite[Section 1.8]{korobkov2024steady}. Regularity of weak solutions to the Stokes problem under mixed boundary conditions and regular data, in polyhedral domains, was studied in a general $W^{2,p}(\Omega)$-setting by Maz'ya \& Rossmann \cite{mazya}; in \cite[Corollary A.3]{benes3d} the Author adapted the result of Maz'ya \& Rossmann to problem \eqref{stokesf} under homogeneous boundary conditions ($\bs{f}_{\Gamma_O} \equiv \bs{0}$), obtaining standard $H^2(\Omega)^3$-regularity whenever $\bs{f}_\Omega\in L^2(\Omega)^3$. Such results could be recovered in our setting by assuming, at least, $\bs{f}=(\bs{f}_\Omega,\bs{f}_{\Gamma_O})\in L^2(\Omega)^3\times H^{1/2}(\Gamma_O)^3$, i.e., a datum more regular than $\LL^2$. When $\bs{f}\in \LL^2$, the situation is far more delicate, and the classical $H^2(\Omega)^3$-regularity seems out of reach.  In the next proposition, proved in Section \ref{proofprop} and inspired by \cite{brown2010mixed}, we describe the expected regularity for \eqref{stokesf} in our setting.
\begin{proposition}\label{benes}
	Let $\Omega\subset \R^{3}$ be an admissible domain, $\bs{f}=(\bs{f}_\Omega,\bs{f}_{\Gamma_O}) \in \mathcal{L}^2$ and $(\bs{u},\pi) \in \VV_*\times L^2(\Omega)$ be the unique weak solution of \eqref{stokesf}. Then, $\bs{u} \in H^{3/2}(\Omega)^3$ and the bound
	\begin{equation}\label{stima}
		\|\bs{u}\|_{H^{3/2}(\Omega)} \leq C\|\bs{f}\|_{\mathcal L^{2}} \, ,
	\end{equation}
	holds for some $C:=C(\Omega)>0$. 
\end{proposition}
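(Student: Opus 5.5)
The plan is to localize the weak solution and reduce to known regularity results in each region, using \cite{brown2010mixed} near the outlet and classical Dirichlet Stokes regularity elsewhere. We fix a smooth partition of unity $\{\chi_j\}$ subordinate to a cover of $\overline\Omega$ by three types of sets: interior sets together with sets touching only $\Gamma_I\cup\Gamma_W$; sets touching $\Gamma_O$ but away from the edge $\partial\Gamma_O$; and a neighbourhood of the edge $\overline{\Gamma_O}\cap\overline{\Gamma_W}$, which lies inside the straight cylinder $\Omega_2$. Away from $\Gamma_O$ only Dirichlet conditions act. There $(\chi_j\bs u,\chi_j\pi)$ solves a Dirichlet Stokes system with right-hand side $\chi_j\bs f_\Omega-2\nabla\chi_j\cdot\nabla\bs u-\bs u\Delta\chi_j+\pi\nabla\chi_j\in L^2$ and divergence $\bs u\cdot\nabla\chi_j\in H^1$. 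The $H^2$ theory (Cattabriga), or the $\mathcal C^2$-piecewise setting of \cite{benes3d}, then gives $H^2\subset H^{3/2}$ control. For this we first record the a priori bound $\|\bs u\|_{H^1}+\|\pi\|_{L^2}\le C\|\bs f\|_{\mathcal L^2}$. It follows from \eqref{weak:stokesf}, the trace inequality on $\Gamma_O$ and Korn/Poincaré in $\mathcal V_*$, and then from \eqref{weak:stokesf2} together with the inf-sup (Bogovskii) estimate on $\mathcal U_*$. The latter has no zero-mean constraint, since $\Gamma_O$ is free.

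Near the flat part of $\Gamma_O$ away from its edge, the localized problem is a Stokes system in a half-space-like region. It carries the Neumann (traction) datum $\chi_j\bs f_{\Gamma_O}+(\partial_{\bs n}\chi_j)\bs u\in L^2(\Gamma_O)$, and homogeneous Dirichlet data on the rest of the boundary of the cutoff region. We use the Neumann-to-Dirichlet/Rellich-type estimate for the Stokes system with $L^2$ traction data, as in Fabes–Kenig–Verchota and \cite{brown2010mixed}. This gives nontangential maximal function control of $\nabla\bs u$ and $\pi$ in $L^2(\partial)$, hence $\nabla\bs u\in H^{1/2}$, i.e. $\bs u\in H^{3/2}$. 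Equivalently, one can use tangential difference quotients of order $1/2$ in the weak formulation. The $L^2$ Neumann datum pairs with $H^{1/2}$ traces of test functions, and this yields $H^{1/2}$ of tangential derivatives. The normal derivative and the pressure are then recovered from the equation.

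The hard step is the edge $\partial\Gamma_O$, where the Dirichlet and traction conditions meet at a right dihedral angle. Because $\Omega_2$ is a straight cylinder with $\mathcal C^2$ cross-section, we flatten $\Gamma_W$ locally, keeping $\Gamma_O$ planar and the angle between them $\pi/2$. We then invoke the mixed-problem estimate of \cite{brown2010mixed}, adapted from Laplace/elasticity to Stokes. With $L^2$ Neumann data and zero Dirichlet data on a Lipschitz domain (here even a right-angle wedge), the solution satisfies $(\nabla\bs u)^*\in L^2(\partial\Omega)$ and $\bs u\in H^{3/2}$. The mixed Stokes Mellin symbol at angle $\pi/2$ has its first singular exponent strictly above $1/2$; we must check this, since it is what ensures no loss below $3/2$. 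If the direct adaptation is problematic, we fall back on an alternative route. We reflect evenly across the plane containing $\Gamma_O$ in the normal direction, so that the traction condition becomes a transmission condition. We then treat the reflected problem in the cylinder with Dirichlet lateral walls. There tangential (axial-free) difference quotients along the edge direction give $H^{1/2}$ of derivatives along the edge, and the wedge-transverse behaviour is controlled by the Mellin exponent argument. Summing the local estimates and absorbing the lower-order terms via the $H^1$–$L^2$ bound yields \eqref{stima}.
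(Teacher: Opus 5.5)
You have the right key ingredient at the edge: the mixed-problem theory of \cite{brown2010mixed} on creased Lipschitz domains, followed by the passage from $(\nabla\bs u)^*\in L^2(\partial\Omega)$ to $\bs u\in H^{3/2}(\Omega)$. But the step you identify as the hard one does not close as written, for two reasons.

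First, after localization, $(\chi\bs u,\chi\pi)$ near $\partial\Gamma_O$ (and near the flat part of $\Gamma_O$) solves a Stokes system with the $L^2$ body force $\chi\bs f_\Omega-2(\nabla\chi\cdot\nabla)\bs u-\bs u\Delta\chi+\pi\nabla\chi$ and the nonzero divergence $\bs u\cdot\nabla\chi$. The Rellich/nontangential estimates and the theorems of \cite{brown2010mixed} concern the homogeneous, solenoidal Stokes system with Dirichlet data in $H^1$ and traction data in $L^2$. You never explain how to remove these terms with $H^{3/2}$ control compatible with the mixed conditions at the edge.

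Second, flattening $\Gamma_W$ by a $\mathcal{C}^2$ map produces a variable-coefficient system, to which the constant-coefficient Stokes theory of \cite{brown2010mixed} does not apply. At that point your argument rests on an ``adaptation from Laplace/elasticity to Stokes'' that is never performed, and on a Mellin-exponent condition you leave unchecked.

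Your fallbacks do not rescue this. Half-order tangential difference quotients only yield $\sup_{h}|h|^{-1/2}\|\nabla(\bs u(\cdot+h)-\bs u)\|_{L^2}<\infty$, i.e.\ Nikolskii regularity $B^{3/2}_{2,\infty}$, which is strictly weaker than $H^{3/2}$. Reflecting across the plane of $\Gamma_O$ either destroys solenoidality (even reflection of the normal component) or creates a jump of $\bs u\cdot\bs n$ (odd reflection), since $\bs u\cdot\bs n$ does not vanish on an outlet. So no clean transmission problem arises.

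The missing idea is to separate the interior force from the boundary datum globally; this is the paper's proof, and it needs no localization, flattening or spectral computation. Write $\bs u=\bs u_1+\bs u_2$.

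For $\bs u_1$, solve \eqref{stokesf} with data $(\bs f_\Omega,\bs 0)$. By \cite[Corollary A.3]{benes3d}, which adapts Maz'ya--Rossmann to this mixed problem and already carries out the right-angle edge analysis, $\bs u_1\in H^2(\Omega)^3$ with $\|\bs u_1\|_{H^2(\Omega)}\le C\|\bs f_\Omega\|_{L^2(\Omega)}$. Hence $\bs u_1\in H^{3/2}(\Omega)^3$.

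The remainder $\bs u_2$ solves the homogeneous solenoidal Stokes system with zero Dirichlet data on $\Gamma_I\cup\Gamma_W$ and traction $\bs f_{\Gamma_O}\in L^2(\Gamma_O)$. The energy bound gives $\|\nabla\bs u_2\|_{L^2(\Omega)}+\|\pi_2\|_{L^2(\Omega)}\le C\|\bs f_{\Gamma_O}\|_{L^2(\Gamma_O)}$. Now $\Omega$ itself is a creased Lipschitz domain, and \cite{brown2010mixed} is already a Stokes result: Theorems 6.3 and 4.1 there give $\|\nabla\bs u_2\|_{L^2(\partial\Omega)}\le C\|\bs f_{\Gamma_O}\|_{L^2(\Gamma_O)}$. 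So $\bs u_2|_{\partial\Omega}\in H^1(\partial\Omega)^3$, and the Dirichlet regularity result \cite[Theorem 4.15]{fabes1988dirichlet} yields $\bs u_2\in H^{3/2}(\Omega)^3$ with the bound \eqref{stima}.

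The geometric hypothesis built into ``creased'' (Dirichlet and Neumann parts meeting at an angle less than $\pi$) is what replaces your Mellin-exponent check. If you insist on localizing, you must at least do two things. First, subtract a Stokes potential of the extended force and a gradient correcting the divergence, so that their traces enter as admissible $H^1$ Dirichlet and $L^2$ traction data. Second, apply \cite{brown2010mixed} on $\Omega$ itself rather than on a flattened wedge.
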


The \textit{Stokes operator} under the mixed boundary conditions \eqref{stokesf}$_2$-\eqref{stokesf}$_3$ is the bounded linear application $\mathcal{A} : \mathcal{V}_{*} \longrightarrow \mathcal{V}_{*}'$ given by the formula
$$
\langle \mathcal{A}\bs{v}, \bs{\varphi} \rangle_{\mathcal{V}_{*}',\mathcal{V}_{*}} := \big(\nabla \bs{v}, \nabla \bs{\varphi}\big)_\Omega \qquad \forall \bs{v}, \bs{\varphi} \in \mathcal{V}_{*} \, .
$$
As in the case of homogeneous Dirichlet boundary conditions \cite[Chapter IV, Section 5.2]{boyer2012mathematical}, we can consider $\mathcal{A}$ as an unbounded operator $\mathcal{A} : D(\mathcal{A}) \subset \mathcal{H}_{*} \longrightarrow \mathcal{H}_{*}$, with domain given by
$$
D(\mathcal{A}) := \{ \bs{v} \in \mathcal{V}_{*} \ | \ \mathcal{A}\bs{v} \in \mathcal{H}_{*} \} \, .
$$

In Section \ref{proofstokestheo} we prove the following result:

\begin{theorem} \label{stokesoperatortheo}
Let $\Omega\subset \R^{3}$ be an admissible domain. Then, $\mathcal{A}$ is a densely defined, closed and self-adjoint operator on $\mathcal{H}_{*}$ such that
\begin{equation} \label{inclusionsa}
\left( \mathcal{V}_{*} \cap H^{2}(\Omega)^{3} \right) \subset D(\mathcal{A}) \subset \left( \mathcal{V}_{*} \cap H^{3/2}(\Omega)^{3} \right)  \, .
\end{equation}
There holds the estimate
\begin{equation} \label{estimatea}
\|\bs{v}\|_{H^{3/2}(\Omega)} \leq C \left\| \mathcal{A}\bs{v} \right\|_{{\mathcal L}^2} \qquad \forall \bs{v}\in D(\mathcal{A}) \, ,
\end{equation}
for some $C:=C(\Omega)>0$. Moreover, there exist:
\begin{itemize}[leftmargin=4mm]
	\item[$\bullet$] an increasing sequence $\{\lambda_k\}_{k \geq 1} \subset (0, +\infty)$ such that $\lambda_k \to +\infty$ as $k \to +\infty$;
	\item[$\bullet$] a sequence $\{\bs{v}_k\}_{k \geq 1} \subset \left( \mathcal{V}_{*} \cap H^{2}(\Omega)^{3} \right)$ forming a complete orthonormal system in $\mathcal{H}_{*}$ and a complete orthogonal system in $D(\mathcal{A})$;
	\item[$\bullet$] a sequence $\{\pi_{k}\}_{k \geq 1} \subset H^{1}(\Omega)$;
\end{itemize}
such that, for every $k \geq 1$, they satisfy in strong form the following eigenvalue problem:
\begin{equation}\label{stokes}
	\left\{
	\begin{array}{ll}
		-\Delta  \bs{v}_{k}+\nabla \pi_{k}=\lambda_{k}  \bs{v}_{k} \, , \quad  \nabla\cdot \bs{v}_{k}=0 \ \ &\mbox{ in } \  \Omega \, , \\[5pt]
		\bs{v}_{k}=\bs{0} \ \ &\mbox{ on } \  \Gamma_{I}\cup\Gamma_{W}  \, , \\[5pt]
		\dfrac{\partial  \bs{v}_{k}}{\partial \bs{n}} - \pi_{k} \bs{n}  =\lambda_{k}  \bs{v}_{k}\ \ &\mbox{ on } \  \Gamma_{O} \, .
	\end{array}
	\right.
\end{equation}
\end{theorem}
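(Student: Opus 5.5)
The plan is to realize $\mathcal{A}$ as the inverse of a compact, self-adjoint, positive solution operator on $\mathcal{H}_*$, and then read off all the claims from the spectral theorem, Proposition \ref{benes} and a bootstrap in regularity.

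\textbf{Step 1: the solution operator $\mathcal{G}$.} For $\bs{f}\in\mathcal{H}_*\subset\mathcal{L}^2$, the Riesz representation theorem in $\mathcal{V}_*$ gives a unique $\bs{u}=:\mathcal{G}\bs{f}\in\mathcal{V}_*$ satisfying \eqref{weak:stokesf}. The map $\bs{\varphi}\mapsto(\bs{f},\bs{\varphi})_{\mathcal{L}^2}$ is continuous on $\mathcal{V}_*$ by the trace theorem.
\begin{itemize}[leftmargin=5mm]
\item $\mathcal{G}:\mathcal{H}_*\to\mathcal{V}_*$ is bounded.
\item By Remark \ref{compactemb}, $\mathcal{G}$ is compact as an operator $\mathcal{H}_*\to\mathcal{H}_*$.
\item Choosing $\bs{\varphi}=\mathcal{G}\bs{g}$ in \eqref{weak:stokesf} gives $(\bs{f},\mathcal{G}\bs{g})_{\mathcal{L}^2}=(\nabla\mathcal{G}\bs{f},\nabla\mathcal{G}\bs{g})_\Omega$, so $\mathcal{G}$ is symmetric and nonnegative.
\item $\mathcal{G}$ is injective: if $\mathcal{G}\bs{f}=0$, then $\bs{f}\perp\mathcal{V}_*$ in $\mathcal{L}^2$. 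Since $\mathcal{D}_*\subset\mathcal{V}_*$ is dense in $\mathcal{H}_*$, this forces $\bs{f}=0$.
\end{itemize}

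\textbf{Step 2: identification with $\mathcal{A}$ and the spectral data.} By definition of $D(\mathcal{A})$, $\mathcal{A}\bs{v}=\bs{f}\in\mathcal{H}_*$ exactly when $\bs{v}=\mathcal{G}\bs{f}$. Hence $D(\mathcal{A})=\mathcal{G}(\mathcal{H}_*)$ and $\mathcal{A}=\mathcal{G}^{-1}$.
\begin{itemize}[leftmargin=5mm]
\item The inverse of an injective, bounded, self-adjoint operator is self-adjoint, and in particular closed.
\item Proposition \ref{benes}, applied with $\bs{f}=\mathcal{A}\bs{v}$, yields $D(\mathcal{A})\subset\mathcal{V}_*\cap H^{3/2}(\Omega)^3$ together with \eqref{estimatea}.
\item The spectral theorem for compact self-adjoint operators gives eigenvalues $\mu_k\downarrow0$ of $\mathcal{G}$ and an orthonormal basis $\{\bs{v}_k\}$ of $\mathcal{H}_*$. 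Set $\lambda_k:=1/\mu_k$; positivity comes from $\lambda_k=\|\nabla\bs{v}_k\|^2_{L^2(\Omega)}$.
\item Orthogonality in $D(\mathcal{A})$, with the graph norm, follows from $(\mathcal{A}\bs{v}_k,\mathcal{A}\bs{v}_j)_{\mathcal{L}^2}=\lambda_k\lambda_j\delta_{kj}$.
\item Completeness in $D(\mathcal{A})$: if $\bs{w}\in D(\mathcal{A})$ is orthogonal to all $\bs{v}_k$ in the graph norm, then $\mathcal{A}\bs{w}+\mathcal{A}^2$-type pairings give $(1+\lambda_k^2)(\bs{w},\bs{v}_k)_{\mathcal{L}^2}=0$ for every $k$, hence $\bs{w}=0$.
\end{itemize}

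\textbf{Step 3: the inclusion $\mathcal{V}_*\cap H^2(\Omega)^3\subset D(\mathcal{A})$ and density.} Let $\bs{v}\in\mathcal{V}_*\cap H^2(\Omega)^3$ and $\bs{\varphi}\in\mathcal{V}_*$. Green's formula gives
\[
(\nabla\bs{v},\nabla\bs{\varphi})_\Omega=(\bs{F},\bs{\varphi})_{\mathcal{L}^2},\qquad \bs{F}:=\Big(-\Delta\bs{v},\ \tfrac{\partial\bs{v}}{\partial\bs{n}}\Big|_{\Gamma_O}\Big)\in\mathcal{L}^2 .
\]
Let $P$ be the orthogonal projector onto $\mathcal{H}_*$ from Lemma \ref{leraydecomp}. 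Since $\mathcal{V}_*\subset\mathcal{H}_*$, we have $(\bs{F},\bs{\varphi})_{\mathcal{L}^2}=(P\bs{F},\bs{\varphi})_{\mathcal{L}^2}$, so $\mathcal{A}\bs{v}=P\bs{F}\in\mathcal{H}_*$. Because $\mathcal{D}_*\subset\mathcal{V}_*\cap H^2(\Omega)^3$ and $\mathcal{D}_*$ is dense in $\mathcal{H}_*$ by construction, $\mathcal{A}$ is densely defined.

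\textbf{Step 4: strong form of the eigenproblem and $H^2$ regularity (main obstacle).}
\begin{itemize}[leftmargin=5mm]
\item \emph{Pressure.} The functional $\bs{\varphi}\mapsto(\nabla\bs{v}_k,\nabla\bs{\varphi})_\Omega-\lambda_k(\bs{v}_k,\bs{\varphi})_{\mathcal{L}^2}$ on $\mathcal{U}_*$ vanishes on $\mathcal{V}_*$. The representation result of \cite[Section 1.8]{korobkov2024steady} then produces $\pi_k\in L^2(\Omega)$ satisfying \eqref{weak:stokesf2} with $\bs{f}=\lambda_k\bs{v}_k$.
\item \emph{Recovering \eqref{stokes}.} Testing first with $H^1_0$ fields and then with fields nonzero on $\Gamma_O$ recovers \eqref{stokes}, at first in the weak sense.
\item \emph{Bootstrap.} By Step 2, $\bs{v}_k\in H^{3/2}(\Omega)^3$, so its trace lies in $H^1(\Gamma_O)^3\subset H^{1/2}(\Gamma_O)^3$. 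The data $(\lambda_k\bs{v}_k,\lambda_k\bs{v}_k|_{\Gamma_O})$ therefore belong to $L^2(\Omega)^3\times H^{1/2}(\Gamma_O)^3$.
\item \emph{$H^2$ regularity.} Here I would invoke the mixed-problem $H^2$ regularity of Maz'ya--Rossmann as adapted in \cite[Corollary A.3]{benes3d}. To reduce to the homogeneous Neumann case treated there, I would first subtract a lifting $\bs{w}\in H^2(\Omega)^3$ with $\bs{w}=\bs{0}$ on $\Gamma_I\cup\Gamma_W$ and prescribed normal derivative $\lambda_k\bs{v}_k$ on $\Gamma_O$, supported near the cylindrical outlet. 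This gives $\bs{v}_k\in H^2(\Omega)^3$ and $\pi_k\in H^1(\Omega)$.
\end{itemize}

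The delicate point is exactly this last reduction. One has to check that $\lambda_k\bs{v}_k|_{\Gamma_O}$ is compatible at the edge $\partial\Gamma_O$, where $\bs{v}_k$ vanishes on $\Gamma_W$. Its trace lies in $H^1(\Gamma_O)$ and vanishes on $\partial\Gamma_O$, which I expect to be enough to build the lifting in the $H^{1/2}_{00}$ sense. The remaining question is whether the right angle between $\Gamma_O$ and the cylindrical wall keeps us inside the range of validity of \cite{benes3d}.
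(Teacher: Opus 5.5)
Steps 1--3 are correct, and they follow the paper's skeleton in a more self-contained way. The paper proves closedness and self-adjointness ``exactly as in'' Boyer--Fabrie and then quotes their spectral theorem. You instead obtain closedness, self-adjointness, the eigenbasis, positivity of the $\lambda_k$ and completeness in the graph norm of $D(\mathcal{A})$ directly from the compact, symmetric, injective solution operator $\mathcal{G}$, with $\mathcal{A}=\mathcal{G}^{-1}$. Your proof of $\mathcal{V}_*\cap H^2(\Omega)^3\subset D(\mathcal{A})$ tests $\bs{F}$ only against $\bs{\varphi}\in\mathcal{V}_*\subset\mathcal{H}_*$, which is shorter than the paper's. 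The paper additionally reconstructs a pressure to write $P_{\mathcal{H}_*}\bs{F}$ explicitly as in \eqref{proj}. That formula is not needed for the theorem, but the paper uses it later, in \eqref{eq005}. The inclusion $D(\mathcal{A})\subset H^{3/2}(\Omega)^3$ and \eqref{estimatea} come from Proposition~\ref{benes} exactly as in the paper.

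The gap is in Step 4, i.e.\ in $\bs{v}_k\in H^2(\Omega)^3$, $\pi_k\in H^1(\Omega)$, and hence in the strong form of \eqref{stokes}. Your reduction fails as stated, because a lifting $\bs{w}$ with $\partial_{\bs{n}}\bs{w}=\lambda_k\bs{v}_k$ on $\Gamma_O$ is not solenoidal. For instance, if $\bs{w}=\bs{0}$ on $\Gamma_O$, then $\nabla\cdot\bs{w}=\lambda_k\bs{v}_k\cdot\bs{n}$ there, which is in general nonzero. So $\bs{v}_k-\bs{w}$ solves a Stokes system whose divergence does not vanish on $\Gamma_O$. This is not the setting of \cite[Corollary A.3]{benes3d} as the paper describes it, namely problem \eqref{stokesf} with $\nabla\cdot\bs{u}=0$ and $\bs{f}_{\Gamma_O}\equiv\bs{0}$. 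Nor can the divergence be removed by a Bogovskii field in $H^2_0(\Omega)^3$, since such a field has divergence in $H^1_0(\Omega)$.

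You have two ways to close the step.
\begin{itemize}
\item Invoke a regularity result that admits nonhomogeneous divergence and Neumann-type data, i.e.\ the Maz'ya--Rossmann theory \cite{mazya}. This is effectively what the paper does in one line: it ``invokes again'' \cite{benes3d}, relying on its remark before Proposition~\ref{benes} that the $H^2$ theory persists for data in $L^2(\Omega)^3\times H^{1/2}(\Gamma_O)^3$.
\item Build a genuinely solenoidal lifting. Prescribe on $\Gamma_O$ a zero normal trace and a tangential trace with surface divergence $-\lambda_k\bs{v}_k\cdot\bs{n}$; this is solvable because $\int_{\Gamma_O}\bs{v}_k\cdot\bs{n}=0$. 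Also impose $\partial_{\bs{n}}\bs{w}=\bs{0}$ on $\Gamma_W$. Then $\nabla\cdot\bs{w}\in H^1_0(\Omega)$ with zero mean, and only at that point subtract a Bogovskii correction.
\end{itemize}

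On either route, the edge compatibility you worried about is precisely $\lambda_k\bs{v}_k|_{\Gamma_O}\in H^{1/2}_{00}(\Gamma_O)^3$. This already follows from $\bs{v}_k\in H^1(\Omega)^3$ vanishing on $\Gamma_W$, so your $H^{3/2}$ bootstrap is not needed for it.
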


\subsection{Construction of the reference flow}\label{reference}

Since $\Omega_{2}$ is a given cylinder, we can define a family of \textit{fully developed flows}, characterized by the fact that the only nonzero component of the associated velocity field is directed along the axis of $\Omega_{2}$. 

Put $\bs{x}:=(x,y,z)\in\R^3$ and introduce the horizontal cylinder $\mathcal{P}_2:= \Theta_{2} \times (0,\ell_{2})$. Let $w\in \mathcal{C}^{2}(\overline{\Theta_{2}})$ be the unique strong solution to the following torsion problem:
$$
-\Delta w = 1 \ \ \mbox{ in } \ \ \Theta_{2} \, , \qquad w=0 \ \ \mbox{ on } \ \ \partial \Theta_{2} \, ,
$$
and define
$$
\varrho := \int_{\Theta_{2}} w = \int_{\Theta_2} | \nabla w |^{2} > 0 \, .
$$ 
The time-dependent Hagen-Poiseuille flow associated to $\mathcal{P}_{2}$, having kinematic viscosity $\nu>0$ and time-dependent flow rate $\Phi \in \mathcal{C}([0,T))$, is characterized by the velocity field $\bs{U}_{\! \Phi} : \overline{\mathcal{P}_{2}}\times (0,T) \longrightarrow \mathbb{R}^3$ and scalar pressure $Q_{\Phi}: \overline{\mathcal{P}_{2}}\times (0,T) \longrightarrow \mathbb{R}$ defined as
\begin{equation}\label{poi23d}
	\bs{U}_{\! \Phi}(\bs{x},t) := \dfrac{\Phi(t)}{\varrho} \big(0,0, w(x,y) \big) \quad \text{and} \quad Q_{\Phi}(\bs{x},t) := -\dfrac{\nu \Phi(t)}{\varrho} \left(z - \ell_{2} \right) \quad \forall (\bs{x},t) \in \overline{\mathcal{P}_{2}}\times [0,T) \, .
\end{equation}
Now, given the geometry of $\Omega_{2}$, there exists a point $\bs{y} \in \mathbb{R}^3$ and a rotation matrix $\mathcal{Q} \in \text{SO}(3)$ such that the set $\overline{\Omega_{2}}$ can be mapped onto $\overline{\mathcal{P}_{2}}$ through the following rigid transformation: 
$$
\mathbf{T}(\bs{x}) = \bs{y} + \mathcal{Q} \, \bs{x} \qquad \forall \bs{x}\in \overline{\Omega_{2}} \, .  
$$
Then, the time-dependent Hagen-Poiseuille flow associated to $\Omega_{2}$ can be defined as
\begin{equation}\label{poi23dd}
	\bs{V}_{\! \Phi}(\bs{x},t) := \mathcal{Q}^{\top} \bs{U}_{\! \Phi}\big(\mathbf{T}(\bs{x}),t\big) \quad \text{and} \quad
	P_{\Phi}(\bs{x},t) := Q_{\Phi}(\mathbf{T}(\bs{x}),t) \qquad \forall (\bs{x},t) \in \overline{\Omega_{2}}\times [0,T) \, ,
\end{equation}
so that 
\begin{equation}\label{flux03d}
	\left\{
	\begin{aligned}
		\begin{aligned}
			& -\nu\Delta \bs{V}_{\! \Phi} + \nabla P_{\Phi}  = \bs{0} \, , \quad  \nabla\cdot \bs{V}_{\! \Phi} = 0 \ \ \mbox{ in } \ \ \Omega_{2}\times (0,T) \, , \\[5pt]
			& \bs{V}_{\! \Phi}=\bs{0} \ \ \hspace{47mm}\mbox{ on } \ \ (\Gamma_{W} \cap \partial \Omega_{2})\times (0,T) \, , \\[5pt]
			& \int_{\Sigma} \bs{V}_{\! \Phi}(\bs{x},t) \cdot \bs{n} = \Phi(t) \quad \text{for any inner cross-section $\Sigma \subset \overline{\Omega_{2}}$ of $\Omega_{2}$,  $\forall t\in[0,T)$} \, .
		\end{aligned}
	\end{aligned}
	\right.
\end{equation}

A smooth reference flow, capturing the inlet velocity on $\Gamma_{I}$ and subject to a do-nothing boundary condition on $\Gamma_{O}$, needs to be constructed (moreover, such reference flow will necessarily preserve the flux condition \eqref{flux03d}$_3$). Given $k \in\{1,2\}$ and $\bs{g}_{*}\in H^1(0,T;H_{00}^{k-1/2}(\Gamma_{I})^{3})$,
 we define its \textit{flux across $\Gamma_{I}$} as the function
\begin{equation} \label{gflux}
	\Phi_{*}(t) := - \int_{\Gamma_{I}} \bs{g}_{*}\cdot \bs{n} \,\qquad \forall t\in [0,T) \, ,
\end{equation}
so that $\Phi_{*} \in H^1(0,T)$. As an extension of \cite[Lemma 2.1]{fss}, in Section \ref{prooflemma} we prove the following:
\begin{lemma}\label{lemma0}
	Let $\Omega \subset \mathbb{R}^{3}$ be an admissible domain. For any $k \in \{1,2\}$, $\bs{g}_{*}\in H^1(0,T;H_{00}^{k-1/2}(\Gamma_{I})^3)$ and $\sigma_{*} \in H^1(0,T;H^{k-3/2}(\Gamma_{O}))$, there exists a pair   $$(\bs{W}_{\! \! *}, \Pi_{*}) \in H^1(0,T;H^{k}(\Omega)^3\cap\VV) \times H^{1}(0,T;H^{k-1}(\Omega)),$$
	satisfying
	\begin{equation}\label{stokesintrolemma}
		\left\{
		\begin{aligned}
			& \nabla\cdot \bs{W}_{\! \! *} = 0 \ \ \hspace{16mm}\mbox{ in } \ \ Q_T \, , \\[5pt]
			& \bs{W}_{\! \! *}=\bs{g}_{*}\ \ \hspace{20mm} \mbox{ on } \ \ \Gamma_{I}\times (0,T)  \, , \\[5pt] &\bs{W}_{\! \! *}=\bs{0} \ \ \hspace{22mm}\mbox{ on } \ \ \Gamma_{W}\times (0,T) \, , \\[5pt]
			& \nu \dfrac{\partial \bs{W}_{\! \! *}}{\partial \bs{n}} - \Pi_{*} \, \bs{n} = \sigma_{*} \, \bs{n} \ \ \mbox{ on } \ \ \Gamma_{O}\times (0,T)  \, .
		\end{aligned}
		\right.
	\end{equation}
	Moreover,  
	\begin{equation}\label{flux}
		\bs{W}_{\! \! *}=\bs{V}_{\! \Phi_{*}} \ \ \mbox{ on } \ \ \Gamma_{O}\times (0,T) \, ,
	\end{equation}
	where $\bs{V}_{\! \Phi_{*}} \in H^1(0,T;\mathcal{C}^{2}(\overline{\Omega_{2}})^3)$ is the Hagen-Poiseuille flow defined in \eqref{poi23dd}-\eqref{flux03d} with flux given by \eqref{gflux}, and there hold
	the estimates
	\begin{equation}\label{fluxestimate}
		\begin{split}
			&\| \bs{W}_{\! \! *}\|_{H^1(0,T;H^{k}(\Omega))} \leq C^{*}\| \bs{g}_{*}\|_{H^1(0,T;H^{k-1/2}(\Gamma_{I}))}, \\[6pt]
			&\| \Pi_{*} \|_{H^1(0,T;H^{k-1}(\Omega))} \leq C^{*}   \| \sigma_{*}\|_{H^1(0,T;H^{k-3/2}(\Gamma_{O}))},
		\end{split}
	\end{equation}
	for some constant $C^{*}:=C^{*}(\Omega)> 0$. 
\end{lemma}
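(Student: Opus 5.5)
The plan is a decomposition $\bs{W}_{\! \! *} = \bs{V}^{(1)} + \bs{V}^{(2)}$ and $\Pi_* = \Pi^{(1)}+\Pi^{(2)}$. The first part carries the inflow datum and the Poiseuille profile; the second carries the traction datum $\sigma_*$. Time enters only as a parameter. Every construction below is linear and bounded in the spatial data, and the time-dependent coefficients are $H^1(0,T)$ scalars times fixed profiles. Hence $H^1$-in-time regularity is inherited directly: applying a bounded linear operator $L$ pointwise in $t$ commutes with $\partial_t$.

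\emph{Step 1 (inflow part).} Fix a cutoff $\chi\in\mathcal{C}^\infty(\overline\Omega)$ with $\chi\equiv1$ in a neighbourhood of $\Gamma_O$ inside $\Omega_2$ and $\chi\equiv0$ outside $\Omega_2$ away from its outer portion. Set $\bs{B}(t):=\chi\,\bs{V}_{\!\Phi_*(t)}$. This field equals the Hagen--Poiseuille flow near $\Gamma_O$, vanishes on $\Gamma_W$ and on $\Gamma_I$, and is smooth. Its divergence $\nabla\chi\cdot\bs{V}_{\!\Phi_*}$ is supported in a compact region of $\Omega_2$. By the flux condition \eqref{flux03d}$_3$ and the definition \eqref{gflux} of $\Phi_*$, we have $\int_{\partial\Omega}(\bs{g}_*^{\rm ext}+\bs{B})\cdot\bs{n}=0$, where $\bs{g}_*^{\rm ext}$ is a bounded extension of $\bs{g}_*\in H^{k-1/2}_{00}(\Gamma_I)$. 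One can choose $\bs{g}_*^{\rm ext}\in H^k(\Omega)$ vanishing near $\Gamma_W\cup\Gamma_O$; this is possible precisely because of the $H_{00}$ assumption, since the zero extension lies in $H^{k-1/2}(\partial\Omega)$. Next, correct the divergence with a Bogovskii operator on a subdomain $\Omega'$ that stays away from $\Gamma_O$ and on whose boundary the field has zero net flux. This gives $\bs{V}^{(1)}\in H^k(\Omega)^3\cap\VV$ with $\bs{V}^{(1)}=\bs{g}_*$ on $\Gamma_I$, $\bs{V}^{(1)}=\bs{V}_{\!\Phi_*}$ on $\Gamma_O$, and $\|\bs{V}^{(1)}\|_{H^k}\le C\|\bs{g}_*\|_{H^{k-1/2}(\Gamma_I)}$. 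Note that $|\Phi_*|\le C\|\bs{g}_*\|$ and the Bogovskii operator is bounded $H^{k-1}_0\to H^k_0$. Near $\Gamma_O$ the field is exactly Poiseuille. For Poiseuille, $\partial_{\bs n}\bs{V}_{\!\Phi}=0$ on $\Gamma_O$ because the profile is independent of the axial variable, and $P_\Phi=0$ there. So $\nu\partial_{\bs n}\bs{V}^{(1)}-P_{\Phi_*}\bs{n}=0$ on $\Gamma_O$, and we define $\Pi^{(1)}:=\chi P_{\Phi_*}$, or simply take $\Pi^{(1)}=0$ since only the boundary trace matters.

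\emph{Step 2 (traction part).} Take $\bs{V}^{(2)}:=\bs{0}$ and $\Pi^{(2)}:=-\mathcal{E}\sigma_*$, where $\mathcal{E}$ is a bounded right inverse of the trace map. For $k=2$ we need $\mathcal{E}:H^{1/2}(\Gamma_O)\to H^1(\Omega)$. For $k=1$ we need a pressure in $L^2(\Omega)$ with ``trace'' in $H^{-1/2}(\Gamma_O)$. The natural way to handle $k=1$ is to interpret \eqref{stokesintrolemma}$_4$ weakly, i.e.
\[
\nu(\nabla\bs{W}_{\! \! *},\nabla\bs\varphi)_\Omega-(\Pi_*,\nabla\cdot\bs\varphi)_\Omega+\nu(\Delta \bs{W}_{\! \! *},\bs\varphi)_\Omega-(\nabla\Pi_*,\bs\varphi)_\Omega=\langle\sigma_*,\bs\varphi\cdot\bs n\rangle_{\Gamma_O},
\]
for test fields vanishing on $\Gamma_I\cup\Gamma_W$. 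This is satisfied by choosing $\Pi^{(2)}$ as the harmonic function with Neumann-type data given by duality. Alternatively, extend $\sigma_*$ via a Lions--Magenes lifting in $L^2(\Omega)$ whose normal-trace pairing reproduces $-\sigma_*$. In either case $\|\Pi_*\|_{H^{k-1}}\le C\|\sigma_*\|_{H^{k-3/2}(\Gamma_O)}$, which gives the second estimate in \eqref{fluxestimate}.

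The main obstacle is Step 2 for $k=1$: making rigorous sense of a pressure in $L^2(\Omega)$ with boundary datum in $H^{-1/2}$. I would handle it by constructing $\Pi^{(2)}$ as $-\mathcal{E}\sigma_*$ using the very-weak Dirichlet problem for the Laplacian in $\Omega$. For $H^{-1/2}$ boundary data this gives a harmonic function in $L^2(\Omega)$, with the boundary identity understood distributionally. A secondary technical point is the flux compatibility in the Bogovskii step, which is guaranteed by \eqref{gflux} and \eqref{flux03d}$_3$.
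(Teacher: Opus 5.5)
Step 1 is fine for $k=1$, but it fails for $k=2$, which is the case the strong-solution theorem needs. In $H^2$ the Bogovskii operator requires its datum to lie in $H^1_0(\Omega')\cap L^2_0(\Omega')$.

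\begin{itemize}
\item The Poiseuille contribution $\nabla\chi\cdot\bs V_{\!\Phi_*}$ is harmless: $\bs V_{\!\Phi_*}$ vanishes on the lateral wall of $\Omega_2$, and the transition of $\chi$ can be kept away from the cut.
\item The term $\nabla\cdot\bs g_*^{\mathrm{ext}}$ is the problem. It is supported up to $\Gamma_I$, so $\Gamma_I$ and the adjacent part of $\Gamma_W$ necessarily belong to $\partial\Omega'$, and there its trace is not zero for a generic lifting.
\item On $\Gamma_I$ that trace equals the surface divergence of the tangential part of $\bs g_*$ plus the normal derivative of the normal component of the extension.
\item On $\Gamma_W$ we have $\bs g_*^{\mathrm{ext}}=\bs 0$, so only normal derivatives survive and the trace equals $\bs n\cdot\partial_{\bs n}\bs g_*^{\mathrm{ext}}$.
\end{itemize}

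Hence the datum is not in $H^1_0(\Omega')$, and your claim that Bogovskii is bounded $H^{k-1}_0\to H^k_0$ does not apply. What you would actually need is a right inverse of the divergence from $H^1(\Omega')\cap L^2_0(\Omega')$ into $H^2(\Omega')$ with zero trace on $\Gamma_I\cup\Gamma_W$. That is an elliptic-regularity statement near the edge $\partial\Gamma_I$, and Bogovskii's construction does not provide it. (Also, $\bs g_*^{\mathrm{ext}}$ can vanish on $\Gamma_W$ but not in a neighbourhood of it, since $\overline{\Gamma_I}$ meets $\overline{\Gamma_W}$.)

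There are two ways to repair this.
\begin{itemize}
\item Choose the lifting so that its divergence has zero trace on $\partial\Omega'$, by also prescribing its normal derivative. On $\Gamma_I$, let $\partial_{\bs n}(\bs g_*^{\mathrm{ext}}\cdot\bs n)$ cancel the surface divergence of $\bs g_*$; on $\Gamma_W$, impose $\bs n\cdot\partial_{\bs n}\bs g_*^{\mathrm{ext}}=0$. This needs an $H^2$ lifting of Cauchy data with compatibility at $\partial\Gamma_I$, which is where the $H^{3/2}_{00}$ hypothesis must enter.
\item Follow the paper. Solve the Dirichlet Stokes problem in the truncated domain $\Omega_*$ (everything up to the cross-section at one third of $\Omega_2$) with data $\bs g_*$, $\bs 0$ and $\bs V_{\!\Phi_*}$; these are compatible by the flux identity. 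Get $H^2$ regularity from Cattabriga's result plus localization. Then glue with Poiseuille by a cutoff $\zeta$ inside the straight piece $\Omega_\sharp$. There $\nabla\zeta\cdot(\bs W_{\!0}-\bs V_{\!\Phi_*})$ does lie in $H^{k-1}_0(\Omega_\sharp)$: both fields vanish on the lateral wall and $\nabla\zeta=\bs 0$ near the two cross-sections. So Bogovskii applies for both values of $k$.
\end{itemize}

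Two smaller points.
\begin{itemize}
\item You must take $\Pi^{(1)}=0$. The choice $\chi P_{\Phi_*}$ violates $\|\Pi_*\|\le C^*\|\sigma_*\|$ when $\sigma_*=0$ but $\bs g_*\neq\bs 0$.
\item Step 2 is a genuinely simpler alternative to the paper's Zaremba problem for $k=2$: a right inverse of the trace $H^{1/2}(\Gamma_O)\to H^1(\Omega)$ suffices, with no $H^2$ regularity for the mixed problem. For $k=1$, your very-weak Dirichlet lifting is no less rigorous than the paper's $\Pi_*=\nabla\widehat\sigma\cdot\bs n_{\Gamma_O}$. However, the weak identity you display is not meaningful as written: $(\nabla\Pi_*,\bs\varphi)_\Omega$ is undefined for $\Pi_*\in L^2(\Omega)$ and $\bs\varphi$ nonzero on $\Gamma_O$.
\end{itemize}
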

\begin{remark}\label{rmk}
	We point out that the reference flow inherits the time regularity assumed for the data $\bs{g}_{*}$ and $\sigma_{*}$. For instance, if $\bs{g}_{*}\in H^m(0,T;H_{00}^{k-1/2}(\Gamma_{I})^3)$ and $\sigma_{*} \in H^m(0,T;H^{k-3/2}(\Gamma_{O}))$ with $m\geq 1$ integer and $k\in\{1,2\}$, then $(\bs{W}_{\! \! *}, \Pi_{*}) \in H^m(0,T;H^{k}(\Omega)^3\cap\VV) \times H^{m}(0,T;H^{k-1}(\Omega))$.
\end{remark}
	
\newpage	
\section{Main results}\label{main}
As specified in the Introduction, we implicitly assume that the compatibility condition between initial datum $\bs{v}_0$ and boundary datum $\bs{g}_{*}$ at $t=0$, i.e. $\gamma_{\Gamma_I}(\bs{v}_{0,\Omega}) = \bs{g}_{*}(\cdot,0)$, holds. Recalling the duality product in \eqref{fomega}, we begin with the definition of weak solutions to problem \eqref{ns}:
\begin{definition}\label{weaksolution}
	Let $T>0$, $\Omega \subset \mathbb{R}^{3}$ be an admissible domain and $\bs{v}_0\in \HH$.	Given $\bs{f} \in L^2(0,T;H^{-1}(\Omega)^3)$, $\bs{g}_{*} \in H^1(0,T;H_{00}^{1/2}(\Gamma_{I})^3)$ and $\sigma_{*} \in H^1(0,T;H^{-1/2}(\Gamma_{O}))$, let $(\bs{W}_{\! \! *}, \Pi_{*}) \in H^1(0,T;\VV) \times H^{1}(0,T;L^2(\Omega))$ be the reference flow from Lemma \ref{lemma0}. 
	A vector field $\bs{v} \in L^\infty(0,T;\mathcal{H})\cap L^2(0,T;\mathcal{V}) $ is called a \textbf{weak solution} of problem \eqref{ns} if $\bs{v} - \bs{W}_{\! \! *} \in L^\infty(0,T;\mathcal{H}_*)\cap L^2(0,T;\mathcal{V}_*)$ and
	\begin{equation} \label{nsweak}
		\begin{split}
			&\int_0^T \phi'(t) (\bs{v}(t),\bs{\varphi})_{\LL^2}\, dt+\phi(0)(\bs{v}_{0},\bs{\varphi})_{\LL^2}=\int_0^T\bigg[\nu (\nabla \bs{v}(t), \nabla \bs{\varphi})_\Omega + \int_\Omega(\bs{v}(t) \cdot \nabla)\bs{v}(t)\cdot \bs{\varphi} +\\& \dfrac{1}{2} \int_{\Gamma_{O}} [\bs{v}(t) \cdot \bs{n}]^{-}\left(\bs{v}(t) - \bs{W}_{\! \! *}(t)\right) \cdot \bs{\varphi} - \langle\bs{f}(t), \bs{\varphi}\rangle  - \langle\sigma_{*}(t)\bs{n}\,,\, \bs{\varphi} \rangle_{H^{-1/2}(\Gamma_O), \, H_{00}^{1/2}(\Gamma_O)} \bigg]\phi(t)dt \, ,
		\end{split}
	\end{equation}
	for every $\bs{\varphi} \in \mathcal{V}_{*}$ and for all $\phi\in \mathcal{C}^\infty_0\big([0,T)\big)$.
\end{definition}

Results concerning existence of weak solutions to the three-dimensional Navier-Stokes equations under mixed boundary conditions, including do-nothing boundary conditions, can be found in \cite{braack2014directional,bruneau1996new}. In the following result, which is proved through the Galerkin method in Section \ref{proof}, we extend to system \eqref{ns} classical weak regularity results on the time-dependent Navier-Stokes problem. 

\begin{theorem}\label{existence}
Let $T>0$, $\Omega \subset \mathbb{R}^{3}$ be an admissible domain and $\bs{v}_0\in \HH$.	Given $\bs{f} \in L^2(0,T;H^{-1}(\Omega)^3)$, $\bs{g}_{*} \in H^1(0,T;H_{00}^{1/2}(\Gamma_{I})^3)$ and $\sigma_{*} \in H^1(0,T;H^{-1/2}(\Gamma_{O}))$, let $(\bs{W}_{\! \! *}, \Pi_{*}) \in H^1(0,T;\VV) \times H^{1}(0,T;L^2(\Omega))$ be the reference flow from Lemma \ref{lemma0}. Then, the problem \eqref{ns} admits a weak solution such that
\begin{equation}\label{reg_weak}
\bs{v}\in L^\infty(0,T;\HH)\cap L^2(0,T;\VV) \, ,\qquad \partial_t\bs{v}\in L^{4/3}(0,T;\VV').
\end{equation}
  Moreover, it satisfies the strong energy inequality
	\begin{equation}\label{sei}
	\begin{split}
		&\frac{1}{2}\|\bs{v}(t)-\bs{W}_{\! \! *}(t)\|^2_{\LL^2}+\nu\int_s^t\|\nabla \big(\bs{v}(\tau)-\bs{W}_{\! \! *}(\tau)\big)\|^2_{L^2(\Omega)}d\tau\\\hspace{0mm}\leq& \, \frac{1}{2}\|\bs{v}(s)-\bs{W}_{\! \! *}(s)\|^2_{\LL^2}\!-\!\int_s^t\Big(\partial_t\bs{W}_{\! \! *}(\tau)\,,\bs{v}(\tau)-\bs{W}_{\! \! *}(\tau)\Big)_{\LL^2}d\tau\\&+ 		\int_s^t\Big\langle\nu \Delta\bs{W}_{\! \! *}(\tau)-(\bs{v}(\tau)\cdot \nabla)\bs{W}_{\! \! *}(\tau)+\bs{f}(\tau)\,,\bs{v}(\tau)-\bs{W}_{\! \! *}(\tau)\Big\rangle d\tau
	\end{split}
\end{equation}
	for almost all $s\geq0$, including $s=0$, and all $t\in[s,T]$.
\end{theorem}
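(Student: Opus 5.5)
We write $\bs{v}=\bs{u}+\bs{W}_{\! \! *}$, with $\bs{u}(t)\in\VV_*$, and solve for $\bs{u}$ by a Faedo--Galerkin scheme. The natural basis is the eigenfunctions $\{\bs{v}_k\}$ of Theorem \ref{stokesoperatortheo}. They are orthonormal in $\HH_*$, orthogonal in $\VV_*$, and complete in both spaces.

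\textbf{Galerkin system.} Set $\bs{u}_m(t)=\sum_{k\le m}c_{km}(t)\bs{v}_k$. We impose \eqref{nsweak}, written in differential form, for every $\bs{\varphi}=\bs{v}_j$ with $j\le m$, where $\bs{v}=\bs{u}_m+\bs{W}_{\! \! *}$. This gives the equation
\[
(\partial_t\bs{u}_m,\bs{v}_j)_{\LL^2}+(\partial_t\bs{W}_{\! \! *},\bs{v}_j)_{\LL^2}+\nu(\nabla\bs{v},\nabla\bs{v}_j)_\Omega+b(\bs{v},\bs{v},\bs{v}_j)+\tfrac12\int_{\Gamma_O}[\bs{v}\cdot\bs{n}]^-\bs{u}_m\cdot\bs{v}_j=\langle\bs{f},\bs{v}_j\rangle+\langle\sigma_*\bs{n},\bs{v}_j\rangle .
\]
The initial datum is $\bs{u}_m(0)=P_m(\bs{v}_0-\bs{W}_{\! \! *}(0))$. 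This function lies in $\HH_*$ thanks to the compatibility condition on $\Gamma_I$ and Lemma \ref{lemaH}. The resulting ODE system has locally Lipschitz right-hand side, since $z\mapsto[z]^-$ is Lipschitz. Carathéodory's theorem therefore gives a local solution, and the energy bound below makes it global.

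\textbf{Energy estimate.} Test with $\bs{u}_m$. The key computation is the trilinear term. Since $\bs{u}_m=0$ on $\Gamma_I\cup\Gamma_W$, integrating by parts as in \eqref{introeq1} gives
\[
b(\bs{v},\bs{u}_m,\bs{u}_m)=\tfrac12\int_{\Gamma_O}|\bs{u}_m|^2(\bs{v}\cdot\bs{n}).
\]
Adding the DDN term $\tfrac12\int_{\Gamma_O}[\bs{v}\cdot\bs{n}]^-|\bs{u}_m|^2$ produces $\tfrac12\int_{\Gamma_O}|\bs{u}_m|^2[\bs{v}\cdot\bs{n}]^+\ge0$. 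This term is dropped on the good side, and it is exactly the energy-stability mechanism.

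The remaining terms are controlled as follows.
\begin{itemize}
\item $b(\bs{u}_m,\bs{W}_{\! \! *},\bs{u}_m)$ and $b(\bs{W}_{\! \! *},\bs{W}_{\! \! *},\bs{u}_m)$ are bounded by Hölder and the $L^4$ Sobolev embedding:
\[
\|\bs{u}_m\|_{L^4}^2\|\nabla\bs{W}_{\! \! *}\|_{L^2}\le C\|\bs{u}_m\|_{L^2}^{1/2}\|\nabla\bs{u}_m\|^{3/2}\|\bs{W}_{\! \! *}\|_{H^1}.
\]
Young's inequality then yields $\tfrac\nu4\|\nabla\bs{u}_m\|^2+C\|\bs{W}_{\! \! *}\|_{H^1}^4\|\bs{u}_m\|^2$.
\item $\|\bs{W}_{\! \! *}\|_{H^1}^4$ is integrable in time because $\bs{W}_{\! \! *}\in H^1(0,T;\VV)\subset C([0,T];\VV)$.
\item The terms involving $\partial_t\bs{W}_{\! \! *}$, $\nu\nabla\bs{W}_{\! \! *}$, $\bs{f}$ and $\sigma_*$ are linear in $\bs{u}_m$. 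They are handled with the trace inequality and Poincaré's inequality on $\VV_*$, which holds because of the Dirichlet condition on $\Gamma_I\cup\Gamma_W$.
\end{itemize}
Gronwall's lemma then gives $\bs{u}_m$ bounded in $L^\infty(0,T;\HH_*)\cap L^2(0,T;\VV_*)$. We also need the boundary trace of $\bs{u}_m$ bounded in $L^\infty L^2(\Gamma_O)$, and this is part of the $\LL^2$ norm.

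\textbf{Time derivative and compactness.} By duality, $\|b(\bs{v},\bs{v},\cdot)\|_{\VV_*'}\le C\|\bs{v}\|_{L^2}^{1/2}\|\bs{v}\|_{H^1}^{3/2}\in L^{4/3}$. For the boundary term we use $H^{1/2}(\Gamma_O)\subset L^4(\Gamma_O)$:
\[
\Big|\int_{\Gamma_O}[\bs{v}\cdot\bs{n}]^-\bs{u}\cdot\bs{\varphi}\Big|\le\|\bs{v}\|_{L^3(\Gamma_O)}\|\bs{u}\|_{L^3(\Gamma_O)}\|\bs{\varphi}\|_{L^3(\Gamma_O)}.
\]
Interpolation gives $\|\bs{v}\|_{L^3(\Gamma_O)}\lesssim\|\bs{v}\|_{L^2(\Gamma_O)}^{1/2}\|\bs{v}\|_{H^1(\Omega)}^{1/2}$, which is again in $L^{4}$ in time, squared into $L^2\subset L^{4/3}$. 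Since $P_m$ is orthogonal in both $\HH_*$ and $\VV_*$, this gives $\partial_t\bs{u}_m$ bounded in $L^{4/3}(0,T;\VV_*')$.

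By Remark \ref{compactemb}, $\VV_*\subset\HH_*$ is compact. Aubin--Lions therefore gives strong convergence of $\bs{u}_m$ in $L^2(0,T;\HH_*)$, including the traces on $\Gamma_O$. For the traces I would use the compactness of $H^1\to H^{1-\epsilon}(\Omega)$ and hence of $H^1\to L^2(\Gamma_O)$, together with the bound $\|\bs{u}\|_{L^2(\Gamma_O)}^2\le\epsilon\|\bs{u}\|_{H^1}^2+C_\epsilon\|\bs{u}\|_{L^2(\Omega)}^2$, to upgrade the convergence to $L^p(0,T;L^2(\Gamma_O))$ for $p$ large enough.

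\textbf{Main obstacle.} I expect the main obstacle to be passing to the limit in the nonlinear boundary term $[\bs{v}_m\cdot\bs{n}]^-\bs{u}_m$. The plan is to use the strong convergence of traces in $L^2(0,T;L^2(\Gamma_O))$, the $L^4$ bounds, and the Lipschitz continuity of $[\cdot]^-$. Together these give strong $L^1$ convergence of the product against fixed smooth $\bs{\varphi}\phi$, after which density of $\mathrm{span}\{\bs{v}_k\}$ in $\VV_*$ recovers \eqref{nsweak}.

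\textbf{Strong energy inequality.} For \eqref{sei}, I would first write the Galerkin energy identity integrated over $(s,t)$, discarding the nonnegative $[\cdot]^+$ term. Passing to the limit uses the following:
\begin{itemize}
\item weak lower semicontinuity for the dissipation term and for $\|\bs{u}(t)\|_{\LL^2}$;
\item strong convergence for the linear forcing and $\partial_t\bs{W}_{\! \! *}$ terms, and for the term $b(\bs{v},\bs{W}_{\! \! *},\bs{u})$, which is rearranged into the form $\langle(\bs{v}\cdot\nabla)\bs{W}_{\! \! *},\bs{u}\rangle$ of \eqref{sei} via the boundary identity;
\item strong convergence $\bs{u}_m(s)\to\bs{u}(s)$ in $\LL^2$, valid for a.e.\ $s$ along a subsequence and at $s=0$ by the choice of initial data.
\end{itemize}
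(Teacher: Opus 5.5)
Your proposal is correct and follows the paper's proof essentially step for step: the same lifting by $\bs{W}_{\!\!*}$ and the same Galerkin basis of eigenfunctions of $\A$. It uses the same cancellation $b(\bs{v},\bs{u}_m,\bs{u}_m)+\tfrac12\int_{\Gamma_O}[\bs{v}\cdot\bs{n}]^-|\bs{u}_m|^2=\tfrac12\int_{\Gamma_O}[\bs{v}\cdot\bs{n}]^+|\bs{u}_m|^2\ge 0$, and the same Gronwall, $L^{4/3}(0,T;\VV_*')$ and compactness arguments, while being more explicit than the paper about Aubin--Lions and the limit in the boundary nonlinearity. Two small fixes: first, the trace interpolation should read $\|\bs{v}\|_{L^3(\Gamma_O)}\le C\|\bs{v}\|_{L^2(\Gamma_O)}^{1/3}\|\bs{v}\|_{H^1(\Omega)}^{2/3}$ (your $\tfrac12,\tfrac12$ version fails by scaling, though the corrected one still yields $L^{3/2}\subset L^{4/3}$ in time); second, since you test \eqref{nsweak} directly, your limit inequality carries $-\nu(\nabla\bs{W}_{\!\!*},\nabla\bs{u})+\langle\sigma_*\bs{n},\bs{u}\rangle$, which by \eqref{stokesintrolemma}$_4$ is the pairing of $\nu\Delta\bs{W}_{\!\!*}-\nabla\Pi_*$ with $\bs{u}$, so you should state explicitly that this is how $\langle\nu\Delta\bs{W}_{\!\!*},\cdot\rangle$ in \eqref{sei} is to be understood, because the $-\nabla\Pi_*$ term of \eqref{ns_aux} (nonzero on $\VV_*$ whenever $\sigma_*$ is not constant on $\Gamma_O$) does not appear in the paper's \eqref{weak:evolutive}.
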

We refer to Section \ref{recpressuresection} for the corresponding recovery of the pressure, in particular, Theorem \ref{press}.

\noindent
Taking  initial data sufficiently small and regular, in the next result we show that the weak solution from Theorem \ref{existence} is, in turn, a global and unique strong solution. The proof is given in Section \ref{proof_strong}.

\begin{theorem}\label{strong}
Let $T>0$, $\Omega \subset \mathbb{R}^{3}$ be an admissible domain and $\bs{v}_0\in \VV$. Given $\bs{f} \in L^2(Q_T)^3$,  $\bs{g}_{*} \in H^1(0,T;H_{00}^{3/2}(\Gamma_{I})^3)$ and $\sigma_{*} \in H^1(0,T;H^{1/2}(\Gamma_{O}))$, let $(\bs{W}_{\! \! *}, \Pi_{*}) \in H^1(0,T;H^2(\Omega)^3\cap\VV) \times H^{1}(Q_T)$ be the reference flow from Lemma \ref{lemma0}. If there holds the inequality
\begin{align}
& \|\nabla (\bs{v}_0- \bs{W}_{\! \! *}(\cdot,0))\|_{L^2(\Omega)}^2 + \dfrac{C}{\nu} \! \int_0^T \! \! \left( \|\bs{W}_{\! \! *}(t)\|_{H^2(\Omega)}^4 +\nu^2\|\bs{W}_{\! \! *}(t)\|_{H^2(\Omega)}^2 +\|\partial_t\bs{W}_{\! \! *}(t)\|_{{\mathcal L}^2}^2+\|\bs{f} (t)\|_{L^2(\Omega)}^{2} \right)dt \notag \\[6pt]
& \leq \frac{\nu^2}{C^2} \exp\!\left(-\frac{C M}{\nu^2}\right) \label{datopiccolo3d} \, ,
\end{align}
where $C:=C(\Omega)$ and $M:=M(\Omega,\nu,T,\bs{v}_0,\bs{W}^*,\bs{f})>0$,
	then the weak solution $\bs{v}$ of problem \eqref{ns} (found in Theorem \ref{existence}) satisfies 
		\begin{equation}\label{regularity3d}
		\bs{v}\in L^\infty(0,T;\VV)\cap L^2(0,T;H^{3/2}(\Omega)^3)\qquad \bs{v}_t,\, \A \bs{v}\in L^2(0,T;\LL^2)\qquad p\in L^2(0,T;W^{1,3/2}(\Omega)),
	\end{equation}
and thus, it is unique and global in time.
	\end{theorem}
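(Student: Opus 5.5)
We work with the perturbation $\bs{u}:=\bs{v}-\bs{W}_{\! \! *}$, which by Lemma \ref{lemma0} takes values in $\mathcal{V}_*$ and has homogeneous Dirichlet data on $\Gamma_I\cup\Gamma_W$. It satisfies a dynamic Neumann problem on $\Gamma_O$ with forcing
\[
\bs{F}:=\bs{f}-\partial_t\bs{W}_{\! \! *}+\nu\Delta\bs{W}_{\! \! *}-(\bs{W}_{\! \! *}\cdot\nabla)\bs{W}_{\! \! *}\in L^2(0,T;\mathcal{L}^2),
\]
whose boundary component comes from $-\partial_t\bs{W}_{\! \! *}$ on $\Gamma_O$. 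The outlet stress cancels thanks to \eqref{stokesintrolemma}$_4$. The nonlinear terms are $(\bs{u}\cdot\nabla)\bs{u}+(\bs{u}\cdot\nabla)\bs{W}_{\! \! *}+(\bs{W}_{\! \! *}\cdot\nabla)\bs{u}$ in $\Omega$, together with the DDN term $\tfrac12[\bs{v}\cdot\bs{n}]^-\bs{u}$ on $\Gamma_O$.

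We run the Galerkin scheme of Theorem \ref{existence} with the eigenfunctions $\{\bs{v}_k\}$ of Theorem \ref{stokesoperatortheo}. Write $\bs{u}_m=\sum_{k\le m}c_{km}(t)\bs{v}_k$ and project onto $\mathrm{span}\{\bs{v}_1,\dots,\bs{v}_m\}$ in $\mathcal{H}_*$. Since $\bs{v}_k$ solve \eqref{stokes} in strong form, $\mathcal{A}\bs{u}_m$ lies in this span and is an admissible test function. Testing with $\mathcal{A}\bs{u}_m$ and using $(\partial_t\bs{u}_m,\mathcal{A}\bs{u}_m)_{\mathcal{L}^2}=\tfrac12\tfrac{d}{dt}\|\nabla\bs{u}_m\|^2_{L^2(\Omega)}$ (self-adjointness) gives
\[
\tfrac12\tfrac{d}{dt}\|\nabla\bs{u}_m\|^2+\nu\|\mathcal{A}\bs{u}_m\|^2_{\mathcal{L}^2}\le |(\bs{F},\mathcal{A}\bs{u}_m)|+|N_m|,
\]
where $N_m$ collects the nonlinear and boundary contributions. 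To estimate them we use \eqref{estimatea}, $\|\bs{u}\|_{H^{3/2}}\le C\|\mathcal{A}\bs{u}\|_{\mathcal{L}^2}$, together with the embeddings $H^{3/2}(\Omega)\subset L^\infty$-borderline. More precisely, $H^{3/2}\subset W^{1,3}$ and $H^{3/2}\subset L^q$ for every $q<\infty$, while $H^1(\Gamma_O)$ traces are available from $H^{3/2}(\Omega)$. For the cubic term, Hölder gives $|((\bs{u}\cdot\nabla)\bs{u},\mathcal{A}\bs{u})|\le\|\bs{u}\|_{L^6}\|\nabla\bs{u}\|_{L^3}\|\mathcal{A}\bs{u}\|$. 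Interpolating $\|\nabla\bs{u}\|_{L^3}\le C\|\bs{u}\|_{H^{3/2}}$ yields $\le C\|\nabla\bs{u}\|\,\|\mathcal{A}\bs{u}\|^2$. This is superlinear with no room to absorb, which is exactly why the smallness condition \eqref{datopiccolo3d} is needed. The linear-in-$\bs{u}$ terms involving $\bs{W}_{\! \! *}$ are bounded by $\tfrac{\nu}{8}\|\mathcal{A}\bs{u}\|^2+\tfrac{C}{\nu}(\|\bs{W}_{\! \! *}\|^4_{H^2}+\nu^2\|\bs{W}_{\! \! *}\|^2_{H^2})\|\nabla\bs{u}\|^2$ and similar expressions. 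For the boundary DDN term we use the trace embedding $H^{3/2}(\Omega)\to H^1(\Gamma_O)\subset L^4(\Gamma_O)$, together with the 2D Ladyzhenskaya inequality on $\Gamma_O$, to get $\int_{\Gamma_O}|\bs{v}||\bs{u}||\mathcal{A}\bs{u}|\le C(\|\nabla\bs{u}\|+\|\bs{W}_{\! \! *}\|_{H^2})\|\bs{u}\|_{H^{3/2}}\|\mathcal{A}\bs{u}\|$, which has the same structure.

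This produces a differential inequality of the form
\[
y'+\nu\|\mathcal{A}\bs{u}_m\|^2\le \tfrac{C}{\nu}\, y^{1/2}\|\mathcal{A}\bs{u}_m\|^2\cdot\nu+\tfrac{C}{\nu}g(t)y+\tfrac{C}{\nu}h(t),
\]
with $y=\|\nabla\bs{u}_m\|^2$, $g\in L^1$ determined by $\bs{W}_{\! \! *}$ and the energy bound of Theorem \ref{existence} (this is where $M$ enters), and $h$ the data integrand in \eqref{datopiccolo3d}. A continuation argument closes it. While $y(t)<\nu^2/C^2$, the cubic term is absorbed, and Gronwall gives $y(t)\le\big(y(0)+\tfrac{C}{\nu}\int_0^T h\big)e^{CM/\nu^2}$. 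Under \eqref{datopiccolo3d} this bound stays strictly below the threshold, so the threshold is never reached on $[0,T]$. Hence $\bs{u}_m$ is bounded in $L^\infty(0,T;\mathcal{V}_*)$ and $\mathcal{A}\bs{u}_m$ is bounded in $L^2(0,T;\mathcal{L}^2)$, and by \eqref{estimatea} $\bs{u}_m$ is bounded in $L^2(0,T;H^{3/2})$. Testing with $\partial_t\bs{u}_m$ then gives a bound on $\partial_t\bs{u}_m$ in $L^2(0,T;\mathcal{L}^2)$.

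Passing to the limit, using uniqueness of weak limits and the compactness of Remark \ref{compactemb}, identifies the limit with the solution of Theorem \ref{existence} and yields \eqref{regularity3d} for $\bs{v}=\bs{u}+\bs{W}_{\! \! *}$. The pressure is recovered through Lemma \ref{leraydecomp}: $\nabla p$ equals $\bs{f}-\partial_t\bs{v}+\nu\Delta\bs{v}-(\bs{v}\cdot\nabla)\bs{v}$. Here $(\bs{v}\cdot\nabla)\bs{v}\in L^2(0,T;L^{3/2})$, because $\bs{v}\in L^\infty(L^6)$ and $\nabla\bs{v}\in L^2(L^3)$, which gives $p\in L^2(0,T;W^{1,3/2})$. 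Uniqueness among weak solutions follows from a weak–strong argument: test the difference $\bs{w}$ of two solutions against itself, exploit the monotonicity-type sign of the DDN term as in \eqref{introeq1}, and apply Gronwall with $\|\nabla\bs{v}\|_{L^3}\in L^2(0,T)$. The main obstacle is the $H^{3/2}$ (rather than $H^2$) control, particularly in the boundary term on $\Gamma_O$ and the $L^3$ gradient bound. I would first check that the trace of $H^{3/2}(\Omega)$ on the flat outlet lands in $H^1(\Gamma_O)$ with the needed constants; if it does not, I would fall back on the $H^{1-\epsilon}$ traces together with fractional interpolation.
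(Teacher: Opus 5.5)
Your plan follows the paper: Galerkin in the eigenbasis of $\A$, testing with $\A\bs{u}_m$, the bound $C\|\nabla\bs{u}\|\,\|\A\bs{u}\|^2_{\LL^2}$ for the cubic term (Lemma \ref{lemma:gagliardo}), a barrier at $\|\nabla\bs{u}_m\|=\nu/C$ closed by Gronwall, then testing with $\partial_t\bs{u}_m$ and using \eqref{estimatea}. Norms without subscript are in $L^2(\Omega)$.

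\textbf{Gap: the DDN term on $\Gamma_O$.} Combined with \eqref{estimatea}, your bound $C(\|\nabla\bs{u}\|+\|\bs{W}_{\!\!*}\|_{H^2})\|\bs{u}\|_{H^{3/2}}\|\A\bs{u}\|_{\LL^2}$ contains the piece $C\|\bs{W}_{\!\!*}(t)\|_{H^2}\|\A\bs{u}\|^2_{\LL^2}$. This is not "the same structure" as the cubic term. Its coefficient is not $y^{1/2}$, which the barrier keeps small, but $\|\bs{W}_{\!\!*}(t)\|_{H^2}$. Hypothesis \eqref{datopiccolo3d} controls that only through $\int_0^T\|\bs{W}_{\!\!*}\|^2_{H^2}$ and $\int_0^T\|\bs{W}_{\!\!*}\|^4_{H^2}$. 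The term $\int_0^T\|\partial_t\bs{W}_{\!\!*}\|^2_{\LL^2}$ does not upgrade this to a pointwise $H^2$ bound.

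Absorbing this piece into $\nu\|\A\bs{u}\|^2_{\LL^2}$ would need an extra hypothesis such as $\sup_t\|\bs{W}_{\!\!*}(t)\|_{H^2}\ll\nu$. So, as written, the continuation argument does not close under the stated assumption. It also relies on the borderline trace $H^{3/2}(\Omega)\to H^1(\Gamma_O)$, which you rightly doubt.

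The paper's \eqref{bd0} uses no $H^{3/2}$ information here. The boundary component of the test function is bounded by $\|\A\bs{u}\|_{\LL^2}$. The other two factors are bounded via $H^1(\Omega)\hookrightarrow H^{1/2}(\partial\Omega)\hookrightarrow L^4(\partial\Omega)$, so
\[
\Big|\int_{\Gamma_O}[\bs{v}\cdot\bs{n}]^{-}\,\bs{u}\cdot(\A\bs{u})_{\Gamma_O}\Big|\le C\big(\|\nabla\bs{u}\|+\|\nabla\bs{W}_{\!\!*}\|\big)\|\nabla\bs{u}\|\,\|\A\bs{u}\|_{\LL^2}\le\epsilon\|\A\bs{u}\|^2_{\LL^2}+\frac{C}{\epsilon}\big(y^2+\|\nabla\bs{W}_{\!\!*}\|^2\,y\big),
\]
which is lower order. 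Next, use Young to turn the $\bs{W}_{\!\!*}$-dependent coefficients of $y$ into $y^2+\|\bs{W}_{\!\!*}\|^4_{H^2}$. The Gronwall exponent then reduces to $\frac{C}{\nu}\int_0^T y\le \frac{CM}{\nu^2}$ by \eqref{eq0044}. This is exactly where the factor $e^{-CM/\nu^2}$ in \eqref{datopiccolo3d} comes from; your unspecified $g$ leaves this open.

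\textbf{Pressure.} You use Lemma \ref{leraydecomp}, whereas the paper applies the $L^{3/2}$--$L^3$ Helmholtz--Weyl decomposition of \cite{fabesmitreanew} in $\Omega$ and then recovers the outlet condition from \eqref{decL1O}. Your route is reasonable and gives the outlet condition at the same time. But Lemma \ref{leraydecomp} is an $\LL^2$ statement, so it must be applied to the residual written with $\nu\A\bs{w}$. All terms of that residual, including $[\bs{v}\cdot\bs{n}]^-\bs{w}\in L^2(\Gamma_O)$, lie in $L^2(0,T;\LL^2)$, and the lemma then yields only $p(t)\in L^2(\Omega)$.

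Your justification of $p\in L^2(0,T;W^{1,3/2}(\Omega))$ covers only the convective term, which is in fact in $L^2(0,T;L^2(\Omega))$ by the same H\"older bound. The delicate term is the viscous one: from $\bs{v}(t)\in H^{3/2}(\Omega)$ alone, $\Delta\bs{v}(t)$ need not be an $L^{3/2}$ function. This needs an argument in either route.

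\textbf{Uniqueness.} The paper asserts uniqueness without proof, so your weak--strong sketch is an addition. To make it rigorous, take $\bs{v}_1$ strong and $\bs{v}_2$ weak, and go through the strong energy inequality \eqref{sei}. You cannot test $\bs{v}_2$ against $\bs{d}=\bs{v}_1-\bs{v}_2$ directly, since $\partial_t\bs{v}_2\in L^{4/3}(0,T;\VV')$ only.

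The DDN difference is not sign-definite on its own. Split it as
\[
\tfrac12[\bs{v}_2\cdot\bs{n}]^-\bs{d}+\tfrac12\big([\bs{v}_1\cdot\bs{n}]^--[\bs{v}_2\cdot\bs{n}]^-\big)(\bs{v}_1-\bs{W}_{\!\!*}).
\]
Merge the first piece with the convective flux to get $\tfrac12\int_{\Gamma_O}[\bs{v}_2\cdot\bs{n}]^+|\bs{d}|^2\ge0$. Control the second via $|[a]^--[b]^-|\le|a-b|$ and $\|\bs{d}\|^2_{L^{8/3}(\Gamma_O)}\le\|\bs{d}\|_{L^2(\Gamma_O)}\|\bs{d}\|_{L^4(\Gamma_O)}$.
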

\begin{remark}
The condition \eqref{datopiccolo3d} is verified, for instance, taking $\nu>0$ sufficiently large; indeed, $M/\nu^2\rightarrow 0$ as $\nu\rightarrow\infty$, see \eqref{eq0044}. Alternatively, \eqref{datopiccolo3d} holds assuming sufficiently small data ($\bs{v}_0, \bs{f}, \bs{g}_*, \sigma_{*}$ and, in turn, $\bs{W}_{\!\!*}$) in the respective norms. 
\end{remark}

\section{Proofs of the results}\label{proofs}

\subsection{Proof of Lemma \ref{lemaH}}\label{prooflemmaH}
We only prove the second identity in Lemma \ref{lemaH}, as the first one can be obtained similarly. For this purpose, we introduce the set
$$
{\mathcal K}_{*}  := \left\lbrace \bs{v} \in {\LL^2} \mid  \nabla \cdot \bs{v}_\Omega=0 \ \  \mbox{in} \ \  \Omega \, , \ \ \  \bs{v}_\Omega\cdot\bs{n} = 0 \ \ \mbox{on} \ \ \Gamma_{I} \cup \Gamma_{W}\, , \ \ \ \bs{v}_\Omega\cdot \bs{n}=\bs{v}_{\Gamma_O}\cdot \bs{n} \ \ \mbox{on} \ \  \Gamma_{O} \right\rbrace \, ,
$$
and we will show that ${\mathcal K}_{*} =\overline{{\mathcal D}_{*}}^{{\mathcal L}^2} ={\mathcal H}_{*}$. 

Firstly, we prove that ${\mathcal K}_{*}$ is a closed subspace of ${\mathcal L}^2$. Let $\left\{ (\bs{v}^{k}_{\Omega}, \bs{v}^{k}_{\Gamma_{O}}) \right\}_{k \in \mathbb N}$ be a sequence in ${\mathcal K}_*$ which converges in ${\mathcal L}^2$, meaning there exist $\bs{v}_\Omega \in L^2(\Omega)^3$ and $\bs{v}_{\Gamma_O} \in L^2(\Gamma_O)^3$ such that
	$$
	\| \bs{v}^{k}_{\Omega} - \bs{v}_\Omega \|_{L^2(\Omega)} \to 0 \,\, \text{ and }\,\, \| \bs{v}^{k}_{\Gamma_{O}} - \bs{v}_{\Gamma_O} \|_{L^2(\Gamma_O)} \to 0, \text{ as } k \to \infty.
	$$
	Our aim is to show that $(\bs{v}_\Omega,\bs{v}_{\Gamma_O}) \in {\mathcal K}_{*}$. Since $\bs{v}^{k}_{\Omega} \cdot \bs{n} = 0$ on $\Gamma_{I} \cup \Gamma_{W}$ and $\bs{v}^{k}_{\Omega} \cdot \bs{n}=\bs{v}^{k}_{\Gamma_{O}} \cdot \bs{n}$ on $\Gamma_{O}$, the generalized Stokes formula (arguing as in \eqref{weakdiv}) furnishes
	$$
	0 = \int_\Omega \varphi (\nabla \cdot \bs{v}^{k}_{\Omega} )  =  \int_{\Gamma_{O}} \varphi (\bs{v}^{k}_{\Gamma_{O}} \cdot \bs{n}) -  \int_\Omega \bs{v}^{k}_{\Omega}   \cdot  \nabla \varphi  \qquad \forall \varphi \in H^1(\Omega) \, , \ \forall k \in \mathbb{N} \, .
	$$
Passing to the limit as $k \to \infty$ yields
\begin{equation} 	\label{divogo}
\int_{\Gamma_{O}} \varphi (\bs{v}_{\Gamma_{O}} \cdot \bs{n}) =  \int_\Omega \bs{v}_{\Omega}   \cdot  \nabla \varphi  \qquad \forall \varphi \in H^1(\Omega) \, .
\end{equation}
In particular, from \eqref{divogo}, we have
$$
\int_\Omega \bs{v}_\Omega  \cdot  \nabla \varphi   = 0 \quad \forall \varphi \in H^1_0(\Omega) \iff  \langle  \nabla \cdot  \bs{v}_\Omega ,  \varphi  \rangle_{H^{-1}(\Omega), \, H^1_0(\Omega)} = 0  \qquad\forall \varphi \in H^1_0(\Omega),
$$
meaning that $\nabla \cdot \bs{v}_\Omega = 0$ in $\Omega$, in weak sense. From this divergence-free condition and $\bs{v}_\Omega \in L^2(\Omega)^3$, it follows by the generalized Stokes formula that 
$$
0 = \langle  \bs{v}_\Omega \cdot \bs{n} , \varphi  \rangle_{H^{-1/2}(\partial \Omega), \, H^{1/2}(\partial \Omega)} - \int_\Omega \bs{v}_\Omega  \cdot \nabla \varphi \qquad \forall \varphi \in H^1(\Omega) \, ,
$$
which, when confronted with \eqref{divogo}, yields
\begin{equation}
	\langle  \bs{v}_\Omega \cdot \bs{n} , \varphi  \rangle_{H^{-1/2}(\partial \Omega), \, H^{1/2}(\partial \Omega)} = \int_{\Gamma_O} \varphi ( \bs{v}_{\Gamma_O} \cdot \bs{n} ) \qquad \forall \varphi \in H^1(\Omega) \, .
	\label{igualt}
\end{equation}
A particular case of  \eqref{igualt} is
$$
\langle  \bs{v}_\Omega \cdot \bs{n} , \varphi  \rangle_{H^{-1/2}(\partial \Omega), \, H^{1/2}(\partial \Omega)} = \int_{\Gamma_O} \varphi ( \bs{v}_{\Gamma_O} \cdot \bs{n} )  \qquad \forall \varphi \in H^1(\Omega)\, \text{ such that }  \,\gamma_{\Gamma_O}(\varphi) \in H^{1/2}_{00}(\Gamma_O) \, ,
$$
that is,
\begin{equation}
	\langle  \bs{v}_\Omega \cdot \bs{n} -  \bs{v}_{\Gamma_O} \cdot \bs{n} , \varphi  \rangle_{H^{-1/2}(\Gamma_O), \, H^{1/2}_{00}(\Gamma_O)} = 0
	\qquad \forall \varphi \in H^1(\Omega)\, \text{ such that } \, \gamma_{\Gamma_O}(\varphi) \in H^{1/2}_{00}(\Gamma_O) \, .
	\label{tra00}
\end{equation}
Actually, we have 
$$
\langle  \bs{v}_\Omega \cdot \bs{n} -  \bs{v}_{\Gamma_O} \cdot \bs{n} , \psi  \rangle_{H^{-1/2}(\Gamma_O), \, H^{1/2}_{00}(\Gamma_O)} = 0 
\quad \forall \psi \in  H^{1/2}_{00}(\Gamma_O) \iff \bs{v}_\Omega \cdot \bs{n} =  \bs{v}_{\Gamma_O} \cdot \bs{n}\,  \text{ in }\, H^{-1/2}(\Gamma_O).
$$
In order to justify this last assertion, relying on the  validity of \eqref{tra00}, it is sufficient to observe that for any $\psi \in  H^{1/2}_{00}(\Gamma_O)$ there exists $\varphi \in H^1(\Omega)$ such that $ \gamma_{\Gamma_O}(\varphi) = \psi$. This is immediate, since we can consider $\widetilde{\psi} \in  H^{1/2}(\partial \Omega)$, the extension by zero of $\psi$ to the whole boundary $\partial \Omega$, and then take $\varphi \in H^1(\Omega)$ as the extension of  $\widetilde{\psi}$  to $\Omega $. From \eqref{igualt} we also have
$$
\langle  \bs{v}_\Omega \cdot \bs{n} , \varphi  \rangle_{H^{-1/2}(\partial \Omega), \, H^{1/2}(\partial \Omega)} = 0 \qquad \forall \varphi \in H^1(\Omega) \,\text{ such that }\,  \gamma_{\Gamma_O}(\varphi) = 0 \, .
$$
More specifically, denoting by $\gamma_{\Gamma_W \cup \Gamma_I}$ the restriction of the trace operator to $\Gamma_W \cup \Gamma_I$, 
$$
\langle  \bs{v}_\Omega \cdot \bs{n} , \varphi  \rangle_{H^{-1/2}(\Gamma_W \cup \Gamma_I), \, H^{1/2}_{00}(\Gamma_W \cup \Gamma_I)} = 0  \qquad \forall \varphi \in H^1(\Omega)\, \text{ such that } \, \gamma_{\Gamma_W \cup \Gamma_I}(\varphi) \in H^{1/2}_{00}(\Gamma_W \cup \Gamma_I) \, ,
$$
and by an argument similar to the one used above, we conclude that 
$$
\bs{v}_\Omega \cdot \bs{n} =  0 \ \ \text{ in } \ \ H^{-1/2}(\Gamma_W \cup \Gamma_I) \, .
$$
Therefore, $(\bs{v}_\Omega,\bs{v}_{\Gamma_O}) \in {\mathcal K}_{*}$. This completes the proof that ${\mathcal K}_{*}$ is closed in ${\mathcal L}^2$. 

Clearly, ${\mathcal D}_{*} \subset {\mathcal K}_{*}$ and therefore $\HH_*  = \overline{{\mathcal D}_{*}}^{{\mathcal L}^2} \subseteq {\mathcal K}_{*}$. Denote by $\mathcal{M}_{*} \subseteq \mathcal{K}_{*}$ the orthogonal complement of $\mathcal{H}_{*}$ within $\mathcal{K}_{*}$.  We will prove that $\mathcal{M}_{*}  = \{ (\bs{0}, \bs{0}) \}$. Take any element $ (\bs{v}_\Omega,\bs{v}_{\Gamma_O}) \in \mathcal{M}_{*}$. Then,
\begin{equation}
	(\bs{v}_\Omega ,\bs{w}_{\Omega})_{\Omega} + (\bs{v}_{\Gamma_O},\bs{w}_{\Gamma_{O}})_{\Gamma_O}=0, \quad \forall (\bs{w}_\Omega,\bs{w}_{\Gamma_O}) \in {\mathcal H}_{*} \, .
	\label{dens0} 
\end{equation}
In particular, since $\mathcal{C}^\infty_{0,\sigma}(\Omega) \subset {\mathcal H}_{*}$, \eqref{dens0} entails, by density,
$$
(\bs{v}_\Omega,\bs{w})_\Omega=0 \qquad \forall \bs{w}\in  \overline{\mathcal{C}^\infty_{0,\sigma}(\Omega)}^{L^{2}(\Omega)^{3}} \, ,
$$
so that the usual Leray decomposition of the space $L^{2}(\Omega)^{3}$ (see \cite[Chapter 1, Section 2]{ladyzhenskaya1969mathematical}) ensures the existence of $p\in H^1(\Omega)$ such that $\bs{v}_\Omega=\nabla p$ in $\Omega$. Since ${\mathcal D}_{*} \subset {\mathcal H}_{*}$, it then follows from \eqref{dens0} that
\begin{equation}\label{eq0i}
	(\nabla p,\bs{w})_{\Omega} + (\bs{v}_{\Gamma_O},\bs{w})_{\Gamma_O}=0 \quad \forall \bs{w} \in {\mathcal D}_{*}
	\ \iff \ \int_{\Gamma_O}p \bs{w} \cdot \bs{n} +\int_{\Gamma_O}\bs{v}_{\Gamma_O}\cdot\bs{w} = 0 \quad \forall \bs{w} \in {\mathcal D}_{*} \, ,
\end{equation}
and by density of ${\mathcal D}_{*}$ in ${\mathcal V}_{*}$, we get 
$$
\int_{\Gamma_O}p \bs{w} \cdot \bs{n} +\int_{\Gamma_O}\bs{v}_{\Gamma_O}\cdot\bs{w} = 0 \qquad \forall \bs{w} \in {\mathcal V}_{*} \, .
$$
Now, given $\bs{\psi} \in H^{1/2}_{00}(\Gamma_O)^3 \cap L^{2}_{\bs{n},0}(\Gamma_O)^{3}$, there exists $
\bs{w} \in {\mathcal V}_{*}$ such that $\gamma_{\Gamma_O}(\bs{w}) = \bs{\psi} $ (see, for example, \cite[Exercise III.3.5]{galdi2011introduction}). Thus,
\begin{equation}\label{eq0fa}
	\int_{\Gamma_O}p \bs{\psi} \cdot \bs{n} + \int_{\Gamma_O}\bs{v}_{\Gamma_O}\cdot\bs{\psi} = 0 \qquad \forall  \bs{\psi} \in H^{1/2}_{00}(\Gamma_O)^3 \cap L^{2}_{\bs{n},0}(\Gamma_O)^{3} \, ,
\end{equation}
which, by density of $H^{1/2}_{00}(\Gamma_O)$ in $L^{2}(\Gamma_O)$, implies 
\begin{equation}\label{eq0f}
	\int_{\Gamma_O}(p  \bs{n}  +  \bs{v}_{\Gamma_O} ) \cdot \bs{\psi} = 0 \qquad \forall \bs{\psi} \in L^2_{\bs{n},0}(\Gamma_O)^3 \, .
\end{equation}
Since $\bs{v}_\Omega=\nabla p$ in $\Omega$, from \eqref{decL1O} and \eqref{eq0f} we obtain a more complete characterization of $\bs{v}$:
$$
(\bs{v}_\Omega, \bs{v}_{\Gamma_{O}}) \in {\mathcal K}_*, \qquad \bs{v}_\Omega=\nabla p , \qquad \bs{v}_{\Gamma_O} + p \bs{n}=c \, \bs{n} \quad \text{(for some constant $c \in \R$)} \, .
$$
Since $\bs{v}_\Omega\cdot \bs{n}=\bs{v}_{\Gamma_O}\cdot \bs{n}$ on $\Gamma_{O}$, we deduce that $p \in H^1(\Omega)$ satisfies the following Neumann problem in $\Omega$:
$$
\left\{
\begin{aligned}
	&\Delta p = 0 & &\quad  \text{ in  } \Omega \, , \\[1pt]
	&\frac{\partial p}{\partial \bs{n} } = 0 & &\quad  \text{ on }  \Gamma_I \cup \Gamma_W \, ,  \\[1pt]
	&\frac{\partial p}{\partial \bs{n} }+ p  = c  & &\quad  \text{ on } \Gamma_O \, .
\end{aligned}
\right.
$$
From this system, we deduce 
$$
\| \nabla (p-c)\|^2_{L^2(\Omega)} + \| p-c \|^2_{L^2(\Gamma_O)} =0 \, ,
$$
so that $p  \equiv c$ in $\Omega$, and consequently, $(\bs{v}_\Omega, \bs{v}_{\Gamma_{O}})=(\nabla p,(c-p)\bs{n})=(\bs{0}, \bs{0})$. Since $\mathcal{H}_{*} \subseteq {\mathcal K}_{*}$ and its orthogonal complement within $\mathcal{K}_{*}$ is $\{ (\bs{0}, \bs{0}) \}$, we conclude that $\mathcal{H}_{*} = {\mathcal K}_{*}$.\hfill\qed

\subsection{Proof of Lemma \ref{leraydecomp}}\label{prooflemma00}
We prove at first $\mathcal{G}_{*}\subseteq \HH_*^{\perp}$. Let $\bs{\phi}\in \mathcal{G}_*$. By definition of $\mathcal{H}_{*}$ and $\mathcal{G}_{*}$, we have
$$
(\bs{v},\bs{\phi})_{\LL^2}=(\bs{v}_{\Omega},\nabla p)_\Omega+(\bs{v}_{\Gamma_O},-p\bs{n})_{\Gamma_O}=\int_{\Gamma_O} p (\bs{v}_{\Gamma_{O}} \cdot \bs{n})-\int_{\Gamma_O} p (\bs{v}_{\Gamma_O}\cdot\bs{n})=0 \qquad \forall  \bs{v}\in\HH_* \, ,
$$
and, therefore, $\bs{\phi}\in \HH_*^{\perp}$.

Now we prove that $\HH_*^{\perp} \subseteq \mathcal{G}_{*}$.  Given $\bs{\phi}\in\HH_*^\perp$, by definition, it holds $(\bs{\phi},\bs{v})_{\LL^2}=0$ for any $\bs{v}\in\mathcal{H}_*$, i.e.
\begin{equation}\label{eq0}
	(\bs{\phi}_\Omega,\bs{v}_\Omega)_{\Omega}+(\bs{\phi}_{\Gamma_O},\bs{v}_{\Gamma_O})_{\Gamma_O}=0 \qquad \forall \bs{v}\in\mathcal{H}_* \, .
\end{equation}
We have to show the existence of  $p\in H^1(\Omega)$ such that $\bs{\phi}_\Omega=\nabla p$ in $\Omega$ and  $\bs{\phi}_{\Gamma_O} = -p\bs{n}$  on   $\Gamma_{O}$. In particular, \eqref{eq0} holds for $\bs{v}\in\HH_*$ such that $\bs{v}_{\Gamma_O} \equiv \bs{0}$, that is,
$$
(\bs{\phi}_\Omega,\bs{v})_\Omega=0, \quad \forall \bs{v}\in L^2_\sigma(\Omega):=\left\{\bs{u}\in L^2(\Omega)^3 \mid \nabla \cdot \bs{u}=0 \ \ \mbox{in} \ \ \Omega \, , \ \bs{u}\cdot \bs{n} = 0 \ \ \mbox{on} \ \ \partial\Omega \right\} \, .
$$
Due to the classical Helmholtz decomposition of $L^2(\Omega)^3$, there exists $\mathfrak{p} \in H^1(\Omega)$ such that $\bs{\phi}_\Omega=\nabla \mathfrak{p}  $ in $\Omega$. Therefore, \eqref{eq0} becomes
\begin{equation}\label{eq00}
	(\nabla \mathfrak{p} ,\bs{v}_\Omega)_{\Omega}+(\bs{\phi}_{\Gamma_O},\bs{v}_{\Gamma_O})_{\Gamma_O}=0 \quad\forall\bs{v}\in\HH_*
	\ \iff\ \int_{\Gamma_O} \mathfrak{p} (\bs{v}_{\Gamma_O}\cdot \bs{n})+\int_{\Gamma_O}\bs{\phi}_{\Gamma_O}\cdot\bs{v}_{\Gamma_O}=0 \quad\forall \bs{v}\in\HH_* \, .
\end{equation}
Recalling \eqref{l2zero}, the next step is to show that 
\begin{equation} 	\label{psinu0}
\int_{\Gamma_O} \mathfrak{p}  (\bs{\psi} \cdot \bs{n})+\int_{\Gamma_O}\bs{\phi}_{\Gamma_O} \cdot\bs{\psi} = 0 \qquad \forall \bs{\psi} \in L^2_{\bs{n},0}(\Gamma_O)^3 \, .
\end{equation}
For $\bs{\psi} \in L^2_{\bs{n},0}(\Gamma_O)^3$, consider the unique weak solution $u \in H^1(\Omega)$ to the following the Neumann problem:
$$
\left\{
	\begin{aligned}
&\Delta u = 0 & &\quad  \text{ on } \Omega \, , \\[2pt]
&\frac{\partial u }{\partial \bs{n} }  = 0  & &\quad  \text{ on } \Gamma_{I}\cup\Gamma_{W} \, ,  \\[2pt]
&\frac{\partial u}{\partial \bs{n} } = \bs{\psi}  \cdot \bs{n}& &\quad  \text{ on }  \Gamma_O \, ,
\end{aligned}
\right.
$$
and take $\bs{v}_\Omega := \nabla u$, $\bs{v}_{\Gamma_O} := \bs{\psi} $ so that $(\bs{v}_\Omega , \bs{v}_{\Gamma_O} ) \in \HH_*$; \eqref{psinu0} then follows from \eqref{eq00}. Now recall the decomposition \eqref{decL1O} and observe  that  \eqref{psinu0} means $\mathfrak{p}  \bs{n} + \bs{\phi}_{\Gamma_O} \in \left( L^2_{\bs{n},0}(\Gamma_O)^3 \right) ^\perp \cong {\mathbb R} \bs{n}$. Therefore, there exists $c \in \mathbb R$ such that 
$$
\mathfrak{p}  \bs{n} + \bs{\phi}_{\Gamma_O} = c \, \bs{n} \iff \bs{\phi}_{\Gamma_O} = - (\mathfrak{p}  - c) \bs{n} .
$$
Defining $p \doteq \mathfrak{p} - c \in H^{1}(\Omega)$, we have $\bs{\phi}_{\Omega} = \nabla p $ in $\Omega$ and $\bs{\phi}_{\Gamma_O} = - p \bs{n}$ on $\Gamma_{O}$.\hfill\qed

\subsection{Proof of Proposition \ref{benes}}\label{proofprop}
In what follows, $C > 0$ shall denote a generic constant that depends exclusively on $\Omega$, that may change from line to line.

Firstly, let $(\bs{u}_{1},\pi_{1})\in \VV_*\times L^2(\Omega)$ be the unique weak solution to the following Stokes system in $\Omega$: 
\begin{equation}\label{stokesf1}
	\left\{
	\begin{array}{ll}
		-\Delta  \bs{u}_{1} + \nabla \pi_{1}=\bs{f}_\Omega \, , \quad  \nabla\cdot \bs{u}_{1}=0 \ \ &\mbox{ in } \  \Omega \, , \\[5pt]
		\bs{u}_{1}=\bs{0} \ \ &\mbox{ on } \  \Gamma_{I}\cup\Gamma_{W}  \, , \\[5pt]
		\dfrac{\partial  \bs{u}_{1}}{\partial \bs{n}} - \pi_{1} \bs{n}  =\bs{0}\ \ &\mbox{ on } \  \Gamma_{O} \,  .
	\end{array}
	\right.
\end{equation}
The results from \cite[Corollary A.3]{benes3d} ensure that $(\bs{u}_{1},\pi_{1})\in H^2(\Omega)^{3} \times H^1(\Omega)$, together with the estimate
\begin{equation}\label{stokesf1bound}
\|\bs{u}_{1}\|_{H^{2}(\Omega)} \leq C\|\bs{f}_\Omega\|_{L^{2}(\Omega)} \, .
\end{equation}
Applying the Brezis-Mironescu \cite[Theorem A]{brezis} interpolation inequality we find $\bs{u}_{1} \in H^{3/2}(\Omega)^{3}$, together with the bound
\begin{equation}\label{stokesf1bound2}
\|\bs{u}_{1}\|_{H^{3/2}(\Omega)} \leq C \|\bs{u}_{1}\|^{1/2}_{H^{1}(\Omega)} \|\bs{u}_{1}\|^{1/2}_{H^{2}(\Omega)} \leq C\|\bs{u}_{1}\|_{H^{2}(\Omega)} \, .
\end{equation} 
Inserting \eqref{stokesf1bound2} into the left-hand side of \eqref{stokesf1bound} gives us
\begin{equation}\label{stokesf1bound4}
	\|\bs{u}_{1}\|_{H^{3/2}(\Omega)} \leq C\|\bs{f}_\Omega\|_{L^{2}(\Omega)} \, .
\end{equation}

Secondly, let $(\bs{u}_{2},\pi_{2})\in \VV_*\times L^2(\Omega)$ be the unique weak solution to the following Stokes system in $\Omega$: 
\begin{equation}\label{stokesf2}
	\left\{
	\begin{array}{ll}
		-\Delta  \bs{u}_{2} + \nabla \pi_{2}=\bs{0} \, , \quad  \nabla\cdot \bs{u}_{2}=0 \ \ &\mbox{ in } \  \Omega \, , \\[5pt]
		\bs{u}_{2}=\bs{0} \ \ &\mbox{ on } \  \Gamma_{I}\cup\Gamma_{W}  \, , \\[5pt]
		\dfrac{\partial  \bs{u}_{2}}{\partial \bs{n}} - \pi_{2} \bs{n}  =\bs{f}_{\Gamma_{O}} \ \ &\mbox{ on } \  \Gamma_{O} \,  .
	\end{array}
	\right.
\end{equation}
Taking $\bs{u}_{2}$ as a test function in the weak formulation of \eqref{stokesf2}, and then representing the scalar pressure $\pi_{2}$ as the divergence of a suitable vector field in $\mathcal{V}_{*}$ (see \cite[Lemma 2.4]{fss}), we obtain the bound
\begin{equation} \label{essential0}
\| \nabla \bs{u}_{2} \|_{L^2(\Omega)}  +  \| \pi_{2} \|_{L^2(\Omega)} \leq C \| \bs{f}_{\Gamma_{O}} \|_{L^2( \Gamma_{O} )} \, .
\end{equation}
Additionally, from the trace inequality and  \eqref{essential0} we infer
\begin{equation} \label{essential00}
	\| \bs{u}_{2} \|_{L^2(\partial \Omega)}  \leq C \| \nabla \bs{u}_{2} \|_{L^2(\Omega)} \leq C \| \bs{f}_{\Gamma_{O}} \|_{L^2( \Gamma_{O} )} \, .
\end{equation}

Notice that $\Omega$ is a \textit{creased} Lipschitz domain, in the sense of \cite[Definition 2.2]{brown2010mixed}. We can therefore combine \cite[Theorem 6.3]{brown2010mixed} with \cite[Theorem 4.1]{brown2010mixed} in order to furnish the regularity estimate
\begin{equation} \label{essential1}
\| \nabla  \bs{u}_{2} \|^2_{L^2(\partial \Omega)} \leq C \left( \| \nabla \bs{u}_{2} \|^2_{L^2(\Omega)}  +  \| \pi_{2} \|^2_{L^2(\Omega)} + \| \bs{f}_{\Gamma_{O}} \|^2_{L^2( \Gamma_{O} )} \right) \leq C \| \bs{f}_{\Gamma_{O}} \|^{2}_{L^2( \Gamma_{O} )} \, ,
\end{equation}
where the second inequality in \eqref{essential1} follows from \eqref{essential0}. The estimate \eqref{essential1} implies that $\bs{u}_{2} \in H^{1}(\partial \Omega)^{3}$, so that an application of \cite[Theorem 4.15]{fabes1988dirichlet} (see \cite[Theorem 2.2]{brown1995estimates} for a more quantitative statement) ensures that $\bs{u}_{2} \in H^{3/2}(\Omega)^{3}$, together with the bound
\begin{equation} \label{essential2}
	\| \bs{u}_{2} \|^{2}_{H^{3/2}(\Omega)} \leq C 	\| \bs{u}_{2} \|^{2}_{H^{1}(\partial \Omega)} = C \left( \| \bs{u}_{2} \|^2_{L^2(\partial \Omega)} + \| \nabla  \bs{u}_{2} \|^2_{L^2(\partial \Omega)} \right) \leq C \| \bs{f}_{\Gamma_{O}} \|^{2}_{L^2( \Gamma_{O} )} \, ,
\end{equation}
where the last inequality in \eqref{essential2} follows from \eqref{essential00}-\eqref{essential1}.

Finally, setting $\bs{u} := \bs{u}_{1} + \bs{u}_{2}$ and $\pi := \pi_{1} + \pi_{2}$ in $\Omega$, \eqref{stokesf1} and \eqref{stokesf2} imply that $(\bs{u}, \pi) \in \VV_*\times L^2(\Omega)$ is the unique weak solution to \eqref{stokesf}. Owing to \eqref{stokesf1bound4} and \eqref{essential2}, we deduce that $\bs{u} \in H^{3/2}(\Omega)^{3}$ and the validity of the estimate \eqref{stima}.
\hfill\qed

\subsection{Proof of Theorem \ref{stokesoperatortheo}} \label{proofstokestheo}

We start by showing that
\begin{equation} \label{domainareg}
	\A\bs{v} = P_{\mathcal{H}_{*}}\left(- \Delta \bs{v}, \gamma_{\Gamma_O}\left( \dfrac{\partial  \bs{v}}{\partial \bs{n}}  \right) \right) \qquad \forall \bs{v} \in \mathcal{V}_{*} \cap H^{2}(\Omega)^{3} \, ,
\end{equation}
where $ P_{\mathcal{H}_{*}}$ is a generalization of the classical Leray projection; more precisely, in view of Lemma \ref{leraydecomp}, $P_{\mathcal{H}_{*}} : \mathcal{L}^{2} \longrightarrow \mathcal{H}_{*}$ is the orthogonal projection of $\mathcal{L}^{2}$ onto $\mathcal{H}_{*}$.

Take any vector field $\bs{v} \in \mathcal{V}_{*} \cap H^{2}(\Omega)^{3}$, so that $\A\bs{v} \in \mathcal{V}_{*}'$ (a priori). We certainly have
$$
\left(\Delta \bs{v}, \gamma_{\Gamma_O}\left( \dfrac{\partial  \bs{v}}{\partial \bs{n}} \right) \right) \in L^{2}(\Omega)^{3} \times H^{1/2}(\Gamma_{O})^{3} \subset \mathcal{L}^{2} \, .
$$
Then, suppose that
$$
P_{\mathcal{H}_{*}}\left(- \Delta \bs{v}, \gamma_{\Gamma_O}\left( \dfrac{\partial  \bs{v}}{\partial \bs{n}}  \right) \right) = \bs{f} \, ,
$$
for some $\bs{f}=(\bs{f}_\Omega, \bs{f}_{\Gamma_O}) \in \mathcal{H}_{*}$. By the usual characterization of orthogonal projections in Hilbert spaces, this means that
\begin{equation}\label{weak:stokesf3}
	\int_{\Omega} (- \Delta \bs{v} - \bs{f}_\Omega) \cdot \bs{\varphi}_\Omega + \int_{\Gamma_{O}} \left(\dfrac{\partial  \bs{v}}{\partial \bs{n}} - \bs{f}_{\Gamma_O}  \right) \cdot \bs{\varphi}_{\Gamma_O} = 0 \qquad \forall (\bs{\varphi}_\Omega, \bs{\varphi}_{\Gamma_O})  \in \mathcal{H}_{*} \, .
\end{equation}
Notice, in particular, that the identity \eqref{weak:stokesf3} holds for every $\bs{\varphi} \in \mathcal{V}_{*}$. In such case, after integrating by parts in $\Omega$, we realize that \eqref{weak:stokesf} holds for every $\bs{\varphi} \in \mathcal{V}_{*}$, meaning there exists a unique pressure $\pi_0 \in L^{2}(\Omega)$ such that \eqref{weak:stokesf2} is observed. This implies 
\begin{equation}\label{weak:stokesf4}
	\int_{\Omega} \nabla \bs{v} : \nabla \bs{\varphi} - \int_{\Omega} \pi_0 (\nabla \cdot \bs{\varphi}) = \int_{\Omega} \bs{f}_\Omega \cdot \bs{\varphi} \qquad \forall \bs{\varphi} \in \mathcal{U}_{*} \, ,
\end{equation}
and therefore, 
\begin{equation} \label{stokesomega}
	-\Delta  \bs{v} + \nabla \pi_0 = \bs{f}_\Omega \quad \text{ in distributional sense in } \Omega \, .
\end{equation}
Since $\bs{v} \in H^{2}(\Omega)^{3}$, \eqref{stokesomega} further implies that $\nabla \pi_0 \in L^{2}(\Omega)^3$, and then, $\pi_0 \in H^{1}(\Omega)$. In other words, \eqref{stokesomega} holds in strong sense in $\Omega$. Consequently, we can multiply \eqref{stokesomega} by a vector field $\bs{\varphi} \in {\mathcal H}_*$, and integrate by parts in $\Omega$ to obtain
$$
- \int_\Omega \Delta  \bs{v} \cdot \bs{\varphi}_\Omega + \int_{\Gamma_O} \pi_0 \bs{n} \cdot \bs{\varphi}_{\Gamma_O} - \int_\Omega \bs{f}_\Omega \cdot \bs{\varphi}_\Omega = 0 \, , 
$$
which, once subtracted from \eqref{weak:stokesf3}, yields
$$
\int_{\Gamma_{O}} \left(\dfrac{\partial  \bs{v}}{\partial \bs{n}} - \pi_0 \bs{n} - \bs{f}_{\Gamma_O}  \right) \cdot \bs{\varphi}_{\Gamma_O}  =0 \qquad \forall (\bs{\varphi}_\Omega, \bs{\varphi}_{\Gamma_O})  \in \mathcal{H}_{*} \, .
$$
Therefore, recalling \eqref{l2zero},
$$
\int_{\Gamma_{O}} \left(\dfrac{\partial  \bs{v}}{\partial \bs{n}} - \bs{f}_{\Gamma_O} - \pi_0 \bs{n}  \right) \cdot \bs{\varphi}_{\Gamma_O} = 0 \qquad \forall \bs{\varphi}_{\Gamma_O} \in L^2_{\bs{n},0}(\Gamma_O)^3 \, .
$$
Due to \eqref{decL1O}, there exists $c \in \mathbb R$ such that 
\begin{equation} \label{fboundary}
	\dfrac{\partial  \bs{v}}{\partial \bs{n}} - \bs{f}_{\Gamma_O} - \pi_0 \bs{n}  = c \bs{n} \quad \text{almost everywhere on } \ \Gamma_{O} \, .
\end{equation}
Hence, after setting $\pi := \pi_0 + c$ in $\Omega$, from \eqref{stokesomega}-\eqref{fboundary} we deduce that
\begin{equation}\label{proj}
	P_{\mathcal{H}_{*}}\left(- \Delta \bs{v}, \gamma_{\Gamma_O}\left( \dfrac{\partial  \bs{v}}{\partial \bs{n}}  \right) \right) = \bs{f}   =   \begin{cases}
		-\Delta\bs{v}+\nabla \pi \quad \mbox{in }\Omega \, , \\[6pt]
		\dfrac{\partial  \bs{v}}{\partial \bs{n}} - \pi \bs{n} \quad\hspace{4mm} \mbox{on }\Gamma_O \, .
	\end{cases}
\end{equation}
By definition and due to \eqref{proj}, for every $\bs{\varphi} \in \mathcal{V}_{*}$ we have
\begin{equation}
	\begin{aligned}
		\langle \A\bs{v}, \bs{\varphi} \rangle_{\mathcal{V}_{*}',\mathcal{V}_{*}} & = \big(\nabla \bs{v}, \nabla \bs{\varphi}\big)_\Omega = - \int_\Omega \Delta  \bs{v} \cdot \bs{\varphi} + \int_{\Gamma_{O}} \dfrac{\partial  \bs{v}}{\partial \bs{n}} \cdot \bs{\varphi} \\[6pt]
		& = \int_\Omega (-\Delta  \bs{v} + \nabla \pi) \cdot \bs{\varphi} + \int_{\Gamma_{O}} \left( \dfrac{\partial  \bs{v}}{\partial \bs{n}} - \pi \bs{n} \right) \cdot \bs{\varphi} = \left( P_{\mathcal{H}_{*}}\left(- \Delta \bs{v}, \gamma_{\Gamma_O}\left( \dfrac{\partial  \bs{v}}{\partial \bs{n}}  \right) \right) , \bs{\varphi} \right)_{\mathcal{L}^{2}} \\[6pt]
		& = \left\langle P_{\mathcal{H}_{*}}\left(- \Delta \bs{v}, \gamma_{\Gamma_O}\left( \dfrac{\partial  \bs{v}}{\partial \bs{n}}  \right) \right) , \bs{\varphi} \right\rangle_{\mathcal{V}_{*}',\mathcal{V}_{*}} \, ,
	\end{aligned}
\end{equation}
which proves \eqref{domainareg}.

Now, take any $\bs{f}=(\bs{f}_\Omega, \bs{f}_{\Gamma_O}) \in \mathcal{H}_{*}$. By definition, there exists a sequence $(\bs{v}_{k})_{k \in \mathbb{N}} \subset \mathcal{D}_{*}$ such that $\bs{v}_{k} \longrightarrow \bs{f}_\Omega$ strongly in $L^2(\Omega)^{3}$ and $\gamma_{\Gamma_{O}}(\bs{v}_{k}) \longrightarrow \bs{f}_{\Gamma_O}$ strongly in $L^2(\Gamma_{O})^{3}$, as $k \to \infty$. Since $\mathcal{D}_{*}$ is contained in $\mathcal{V}_{*} \cap H^{2}(\Omega)^{3}$, identity \eqref{domainareg} can be applied to deduce that $\mathcal{A}\bs{v}_{k} \in \mathcal{H}_{*}$ for every $k \in \mathbb{N}$. Therefore, $(\bs{v}_{k})_{k \in \mathbb{N}} \subset D(\mathcal{A})$, implying that $D(\mathcal{A})$ is dense in $\mathcal{H}_{*}$. Afterwards, exactly as in the proofs of \cite[Lemma IV.5.3]{boyer2012mathematical} and \cite[Lemma IV.5.4]{boyer2012mathematical}, we can show that $\mathcal{A}$ is a closed and self-adjoint operator, respectively.

We then turn to the task of proving \eqref{inclusionsa}. Firstly, take any $\bs{v} \in \mathcal{V}_{*} \cap H^{2}(\Omega)^{3}$, so that \eqref{domainareg} entails $\mathcal{A}\bs{v} \in \mathcal{H}_{*}$, and subsequently, $\bs{v} \in D(\mathcal{A})$. Thus, $\left( \mathcal{V}_{*} \cap H^{2}(\Omega)^{3} \right) \subset D(\mathcal{A})$.

Secondly, let $\bs{v} \in D(\mathcal{A})$, and put $\bs{f}:=\mathcal{A}\bs{v} \in \mathcal{H}_{*}$. Therefore,
\begin{equation} \label{secondinclusion}
\big(\nabla \bs{v}, \nabla \bs{\varphi}\big)_\Omega = \langle \mathcal{A}\bs{v}, \bs{\varphi} \rangle_{\mathcal{V}_{*}',\mathcal{V}_{*}} = \langle \bs{f}, \bs{\varphi} \rangle_{\mathcal{V}_{*}',\mathcal{V}_{*}} = \left( \bs{f} , \bs{\varphi} \right)_{\mathcal{L}^{2}} \qquad \forall \bs{\varphi} \in \mathcal{V}_{*} \, ,
\end{equation}
thereby implying the existence of a pressure $\pi \in L^{2}(\Omega)$ such that the pair $(\bs{v}, \pi)$ is a weak solution to \eqref{stokesf} in $\Omega$. Proposition \ref{benes} then ensures $\bs{v} \in H^{3/2}(\Omega)^{3}$, and consequently, $D(\mathcal{A}) \subset \left( \mathcal{V}_{*} \cap H^{3/2}(\Omega)^{3} \right)$. 

Let us now prove the estimate \eqref{estimatea}. Given $\bs{v} \in D(\mathcal{A})$, as before we put $\bs{f} := \mathcal{A}\bs{v} \in \mathcal{H}_{*}$, so that \eqref{secondinclusion} holds. By interpreting \eqref{secondinclusion} as the weak formulation of \eqref{stokesf}, through Proposition \ref{benes} we deduce 
$$
\|\bs{v}\|_{H^{3/2}(\Omega)}\le C \|\bs{f}\|_{{\mathcal L}^2} = C \left\| \mathcal{A}\bs{v}  \right\|_{{\mathcal L}^2} \, ,
$$
for some constant $C:=C(\Omega)>0$.

Concerning the spectral decomposition statement, the properties of the operator $\mathcal{A}$ proved so far allow us to invoke \cite[Theorem II.6.6]{boyer2012mathematical} and deduce the existence of a set $\{\bs{v}_k\}_{k \geq 1} \subset D(\mathcal{A}) \setminus \{\bs{0}\}$ forming a complete orthonormal system in $\mathcal{H}_{*}$, a complete orthogonal system in $D(\mathcal{A})$ (and therefore, also a complete orthogonal system in $\mathcal{V}_{*}$) such that
\begin{equation} \label{eigen1}
\mathcal{A}\bs{v}_k = \lambda_{k} \bs{v}_k \qquad \forall k \geq 1 \, ,
\end{equation}
for some sequence $\{\lambda_k\}_{k \geq 1} \subset \mathbb{R}$ such that $\{|\lambda_{k}|\}_{k \geq 1} \subset [0, +\infty)$ is increasing and divergent towards $+ \infty$ as $k \to +\infty$. As in \eqref{secondinclusion}, the identity \eqref{eigen1} implies the existence of a sequence $\{\pi_{k}\}_{k \geq 1} \subset L^{2}(\Omega)$ such that the pair $(\bs{v}_{k} , \pi_{k})$ is a weak solution to the system 
\begin{equation} \label{eigen2}
	\left\{
	\begin{array}{ll}
		-\Delta  \bs{v}_{k}+\nabla \pi_{k}=\lambda_{k}  \bs{v}_{k} \, , \quad  \nabla\cdot \bs{v}_{k}=0 \ \ &\mbox{ in } \  \Omega \, , \\[5pt]
		\bs{v}_{k}=\bs{0} \ \ &\mbox{ on } \  \Gamma_{I}\cup\Gamma_{W}  \, , \\[5pt]
		\dfrac{\partial  \bs{v}_{k}}{\partial \bs{n}} - \pi_{k} \bs{n}  =\lambda_{k}  \bs{v}_{k}\ \ &\mbox{ on } \  \Gamma_{O} \, ,
	\end{array}
	\right.
\end{equation}
for every $k \geq 1$. We invoke again \cite[Corollary A.3]{benes3d} to ensure $\{(\bs{v}_{k},\pi_{k})\}_{k \in \mathbb{N}} \subset H^2(\Omega)^{3} \times H^1(\Omega)$. Moreover, after multiplying \eqref{eigen2} by $\bs{v}_{k}$ and integrating by parts in $\Omega$, we find
$$
\| \nabla \bs{v}_{k} \|^{2}_{L^{2}(\Omega)} = \lambda_{k} \left( \| \bs{v}_{k} \|^{2}_{L^{2}(\Omega)} + \| \bs{v}_{k} \|^{2}_{L^{2}(\Gamma_{O})} \right) \qquad \forall  k \geq 1 \, ,
$$
thus showing $\lambda_{k} > 0$ for every $k \geq 1$. This concludes the proof. 
\hfill\qed

 \subsection{Proof of Lemma \ref{lemma0}}\label{prooflemma}
 In view of Definition \ref{addomain3}, let us introduce (in local coordinates) the sets
 $$
\Omega_{*} := \Omega_{0} \cup \Omega_{1} \cup \left\lbrace \bs{x} \in \Omega_{2} \ \Big\vert \  z < \dfrac{\ell_{2}}{3} \, \right\rbrace \, , \qquad \Gamma_{*} := \overline{\Omega} \cap \left\lbrace \bs{x} \in \mathbb{R}^{3} \ \Big\vert \  z = \dfrac{\ell_{2}}{3} \, \right\rbrace \, .
 $$
 Let $\bs{g}_{*}\in H^1(0,T;H_{00}^{1/2}(\Omega_{*})^3)$ (case $k=1$) and consider the following Stokes system in $\Omega_{*} $ (in terms of the parameter $t \in [0,T)$ of the data, via $\bs{g}_{*}$):
 \begin{equation}\label{stopoi2truncated}
 	\left\{
 	\begin{aligned}
 		& -\Delta \bs{W}_{\! 0} + \nabla \Pi_{0}  = \bs{0} \, , \quad  \nabla\cdot \bs{W}_{\! 0} = 0 \ \ \mbox{ in } \ \ \Omega_{*} \times (0,T) \, , \\[5pt]
 		& \bs{W}_{\! 0}=\bs{g}_{*}\ \ \hspace{3mm}\mbox{ on } \ \ \Gamma_{I}\times (0,T)  \, ,\\[5pt] &\bs{W}_{\! 0}=\bs{0} \ \ \hspace{5mm}\mbox{ on } \ \ (\Gamma_{W} \cap \partial \Omega_{*} )\times (0,T) \, , \\[5pt]
 		& \bs{W}_{\! 0}=\bs{V}_{\! \Phi_{*}} \ \ \mbox{ on } \ \ \Gamma_{*} \times (0,T) \, .
 	\end{aligned}
 	\right.
 \end{equation}
 The existence of a unique weak solution $(\bs{W}_{\! \! 0}, \Pi_{0}) \in H^1(0,T;H^{1}(\Omega_{*} )^3) \times H^1(0,T;L_{0}^{2}(\Omega_{*} ))$ to system \eqref{stopoi2truncated} is guaranteed by standard methods, owing to the compatibility condition coming from \eqref{flux03d}$_3$ and \eqref{gflux}; in particular, we have
 $$
 \int_{\Gamma_{I}} \bs{g}_{*}\cdot \bs{n} + \int_{\Gamma_{*} } \bs{V}_{\! \Phi_{*}} \cdot \bs{n} = -\Phi_{*}(t) + \Phi_{*}(t) = 0 \qquad \forall t\in[0,T) \, .
 $$
In order to justify the above result, consider, for each $t \in(0, T)$, the following stationary Stokes system (with non-homogeneous Dirichlet boundary conditions) in $\Omega_{*}$:
 \begin{equation}\label{stopoi22}
 	\left\{
 	\begin{aligned}
 		& -\Delta \bs{W}_{\! 0} + \nabla \Pi_{0}  = \bs{0} \, , \quad  \nabla\cdot \bs{W}_{\! 0} = 0 \ \ \mbox{ in } \ \ \Omega_{*} \, , \\[5pt]
 		& \bs{W}_{\! 0}=\bs{g}_{*}(t) \ \ \hspace{3mm}\mbox{ on } \ \ \Gamma_{I}  \, , \\[5pt]&\bs{W}_{\! 0}=\bs{0} \ \ \hspace{9mm}\mbox{ on } \ \ \Gamma_{W} \cap \partial \Omega_{*} \, , \\[5pt]
 		& \bs{W}_{\! 0}=\bs{V}_{\! \Phi_{*}}(t) \ \ \mbox{ on } \ \ \Gamma_{*} \, ,
 	\end{aligned}
 	\right.
 \end{equation}
 whose unique solution $(\bs{W}_{\! \! 0}(t), \Pi_{0}(t)) \in H^{1}(\Omega_{*} )^3 \times L_{0}^{2}(\Omega_{*})$ obeys the bound 
 \begin{equation}\label{fluxestimate00}
 		\| \bs{W}_{\! \! 0}(t) \|_{H^1(\Omega_{*} )} + \| \Pi_{0}(t) \|_{L^2(\Omega_{*} )} \leq C\left( \| \bs{g}_{*}(t) \|_{H^{1/2}(\Gamma_{I})} + \| \bs{V}_{\! \Phi_{*}}(t) \|_{H^{1/2}(\Gamma_{*} )} \right) \leq  C \| \bs{g}_{*}(t) \|_{H^{1/2}(\Gamma_{I})} \, ,
 \end{equation} 
for all $t\in(0,T)$, see, for example, \cite[Theorem IV.1.1]{galdi2011introduction}. In the remaining part of this proof, unless otherwise specified, $C:=C(\Omega_*)> 0$ denotes a generic constant, that may change from line to line.
 
  Due to the linearity of the problem \eqref{stopoi22}, for every $t_1,t_2 \in [0,T]$ we have
 \begin{equation}\label{flux estim}
 	\begin{split}
 		& \| \bs{W}_{\! \! 0}(t_1) - \bs{W}_{\! \! 0}(t_2) \|_{H^1(\Omega_{*} )} + \| \Pi_{0}(t_1) - \Pi_{0}(t_2) \|_{L^2(\Omega_{*} )} \\[6pt]
 		\leq & \, C \left( \| \bs{g}_{*}(t_1) -  \bs{g}_{*}(t_2) \|_{H^{1/2}(\Gamma_{I})} \!+\! \| \bs{V}_{\! \Phi_{*}}(t_1) - \bs{V}_{\! \Phi_{*}}(t_2) \|_{H^{1/2}(\Gamma_{*} )} \right)\\[6pt]
 		\leq & \,  C\| \bs{g}_{*}(t_1) - \bs{g}_{*}(t_2) \|_{H^{1/2}(\Gamma_{I})} \, .
 	\end{split}
 \end{equation} 
Since $\bs{g}_{*}\in H^1(0,T;H_{00}^{1/2}(\Gamma_I)^3) \subset \mathcal{C}^{0,1/2}([0,T];H_{00}^{1/2}(\Gamma_I)^3)$, there holds $\bs{W}_{\! \! 0} \in \mathcal{C}^{0,1/2}([0,T];H^{1}(\Omega_{*} )^3)$ and $\Pi_{0} \in \mathcal{C}^{0,1/2}([0,T];L^{2}(\Omega_{*} ))$, where $\mathcal{C}^{0,1/2}$ is the usual H\"older space.  Since $L^2(\Omega_{*} )$ and $H^1(\Omega_{*} )$ are reflexive spaces, for any $J \Subset [0,T]$ and any $h \in \mathbb R$  such that $|h| < \text{dist}(J,\{0,T\})$, we have, owing to \eqref{flux estim},
 \begin{equation}\label{fluxdtestim}
 	\begin{split}
 		& \left(  \int_J \| \bs{W}_{\! \! 0}(t + h) - \bs{W}_{\! \! 0}(t) \|^2_{H^1(\Omega_{*} )}dt \right)^{\frac 12} + \left(  \int_J \| \Pi_{0}(t+h) - \Pi_{0}(t) \|^2_{L^2(\Omega_{*} )} dt \right)^{\frac 12}  \\[6pt]
 		\leq & \, C \left[ \left(  \int_J  \| \bs{g}_{*}(t+h) -  \bs{g}_{*}(t) \|^2_{H^{1/2}(\Gamma_{I})} dt \right)^{\frac 12}  +  \left(  \int_J  \| \bs{V}_{\! \Phi_{*}}(t+h) - \bs{V}_{\! \Phi_{*}}(t) \|^2_{H^{1/2}(\Gamma_{*} )} dt \right)^{\frac 12}  \right] \\[6pt]
 		\leq & \,  C  \left( \int_J  \| \bs{g}_{*}(t+h) - \bs{g}_{*}(t) \|^2_{H^{1/2}(\Gamma_{I})} dt \right)^{\frac 12} \,
 		\\[6pt]
 		\leq & \,  C \| \partial_t \bs{g}_{*} \|_{L^{2}(0,T; H^{1/2}(\Gamma_{I})} |h| \, .
 	\end{split}
 \end{equation} 
 Therefore, by \cite[Proposition 2.2.26]{Papageorgiou2005}, we conclude that $\bs{W}_{\! \! 0} \in H^1(0,T;H^{1}(\Omega_{*} )^3) $ and  $\Pi_{0} \in L^2(0,T;L^{2}(\Omega_{*} )) $. Moreover, from the identity
 \begin{equation}\label{stopoi22dt}
 	\left\{
 	\begin{aligned}
 		& -\Delta (\partial_t \bs{W}_{\! 0}) + \nabla (\partial_t \Pi_{0})  = \bs{0} \, , \quad  \nabla\cdot ( \partial_t  \bs{W}_{\! 0} ) = 0 \ \ \mbox{ in } \ \ \Omega_{*} \, , \\[5pt]
 		& \partial_t  \bs{W}_{\! 0}=(\partial_t \bs{g}_{*})(t) \ \ \hspace{2mm}\mbox{ on } \ \ \Gamma_{I}  \, , \\[5pt] &\bs{W}_{\! 0}=\bs{0} \ \ \hspace{18mm}\mbox{ on } \ \ \Gamma_{W} \cap \partial \Omega_{*} \, , \\[5pt]
 		&\partial_t  \bs{W}_{\! 0}=(\partial_t  \bs{V}_{\! \Phi_{*}})(t) \ \ \mbox{ on } \ \ \Gamma_{*} \, ,
 	\end{aligned}
 	\right.
 \end{equation}
 and the estimate
 \begin{equation}\label{bo2}
 \| (\partial_t  \bs{W}_{\! \! 0})(t) \|_{H^1(\Omega_{*} )} + \| (\partial_t  \Pi_{0})(t) \|_{L^2(\Omega_{*} )}  \leq  C \| (\partial_t  \bs{g}_{*})(t) \|_{H^{1/2}(\Gamma_{I})} \, ,
  \end{equation}
 it follows that
 \begin{equation}\label{bo}
 \| \bs{W}_{\! \! 0}\|_{H^1(0,T;H^1(\Omega_{*} ))} + \| \Pi_{0}\|_{H^1(0,T;L^2(\Omega_{*} ))} \leq C \| \bs{g}_{*}\|_{H^1(0,T;H^{1/2}(\Gamma_{I}))} \, .
 \end{equation}
In the remaining part of the proof we get estimates in space for all $t\in(0,T)$ as \eqref{fluxestimate00}, and it is implied that estimates such as \eqref{bo2}-\eqref{bo}, involving time derivatives, can be inferred repeating the previous arguments.
 
We consider now the case $k=2$, i.e. we assume that $\bs{g}_{*}\in H^1(0,T;H^{3/2}_{00}(\Gamma_I)^3)$.
 Since the domains $\Omega_{1}$ and $\Omega_{2} \cap \Omega_{*} $ are cylinders (thus, convex), and $\Gamma_{W}$ is smooth, we merge the regularity results for the steady-state Stokes equations under non-homogeneous Dirichlet boundary conditions, see \cite[Teorema, page 311]{cattabriga1961problema}, with a localization argument through a partition of unity, e.g. \cite[Theorem A.1]{concaenglish}; hence, we obtain that $(\bs{W}_{\! \! 0}, \Pi_{0}) \in H^1(0,T;H^{2}(\Omega_{*} )^3) \times H^1(0,T; H^{1}(\Omega_{*} ))$ together with the estimate
 \begin{equation}\label{fluxestimate0}
 	\| \bs{W}_{\! \! 0}(t) \|_{H^{2}(\Omega_{*} )} +\| \Pi_{0}(t) \|_{H^{1}(\Omega_{*} )} \leq C \| \bs{g}_{*}(t) \|_{H^{3/2}(\Gamma_{I})} \,\quad \forall t\in(0,T) \, ,
 \end{equation} 
and by \cite[Proposition 2.2.26]{Papageorgiou2005}, 
$$
\| \bs{W}_{\! \! 0}\|_{H^1(0,T;H^2(\Omega_{*} ))} + \| \Pi_{0}\|_{H^1(0,T;H^1(\Omega_{*} ))} \leq C \| \bs{g}_{*}\|_{H^1(0,T;H^{3/2}(\Gamma_{I}))} \, .
$$
 
 Now, for each $t\in (0,T)$ we smoothly connect the vector field $\bs{W}_{\! 0}(t)$, defined in $\Omega_{*} $, with the velocity field $\bs{V}_{\! \Phi_{*}}(t)$, defined in $\Omega_{2}$. For this, we introduce the following subsets of $\Omega_{2}$:
 $$
 \Omega_{\sharp} := \left\lbrace \bs{x} \in \Omega_{2} \ \Big\vert \ 0 < z < \dfrac{\ell_{2}}{3} \, \right\rbrace \, , \qquad \Gamma_{\sharp} :=\overline{\Omega} \cap \left\lbrace \bs{x} \in \mathbb{R}^{3} \ \vert \  z = 0 \, \right\rbrace. 
 $$
 Accordingly, we take a cutoff function $\zeta \in \mathcal{C}^{\infty}_{0}(\mathbb{R}^3)$ such that 
 \begin{equation} \label{cutpro}
 	\zeta \equiv 1 \ \ \text{in a neighborhood of} \ \ \overline{\Omega_{0} \cup \Omega_{1}} \, ; \qquad \zeta \equiv 0 \ \ \text{in a neighborhood of} \ \ \overline{\Omega_{2} \setminus \overline{\Omega_{\sharp}}} \, ,
 \end{equation}
 see Figure \ref{cutoff}.
 \begin{figure}[H]
 	\begin{center}
 		\includegraphics[scale=0.7]{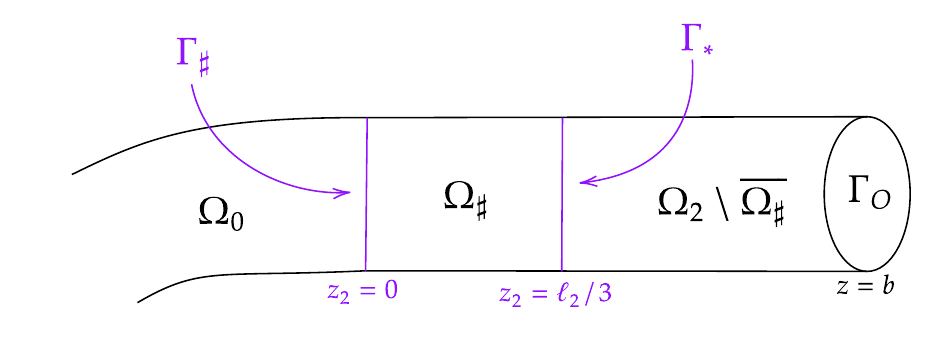}
 	\end{center}
 	\vspace*{-7mm}
 	\caption{The domain $\Omega_{\sharp} \subset \Omega$.}\label{cutoff}
 \end{figure}
 \noindent
 We then introduce the vector field $\bs{Z}_{\! 0} \in H^1(0,T;H^{k}(\Omega_{\sharp})^3)$, with $k\in\{1,2\}$, according to
 $$
 \bs{Z}_{\! 0} := \zeta \bs{W}_{\! 0} + (1-\zeta) \bs{V}_{\! \Phi_{*}} \quad \text{in} \ \ \Omega_{\sharp}\times(0,T) \, ,
 $$
which satisfies
 \begin{equation}\label{divz0}
 	\nabla \cdot \bs{Z}_{\! 0} = \nabla \zeta \cdot (\bs{W}_{\! 0} - \bs{V}_{\! \Phi_{*}}) \quad \text{in} \ \ \Omega_{\sharp}\times (0,T) \, ,
 \end{equation}
 and therefore, $\nabla \cdot \bs{Z}_{\! 0} \in H^1(0,T;H^{k-1}_0(\Omega_{\sharp}))$, due to \eqref{cutpro}. Moreover, from the Divergence Theorem and \eqref{flux03d}$_3$-\eqref{gflux}-\eqref{stopoi2truncated}, we have
 $$
 \int_{\Omega_{\sharp}} \nabla \cdot \bs{Z}_{\! 0} = \int_{\Gamma_{*}} \bs{Z}_{\! 0} \cdot \bs{n} + \int_{\Gamma_{\sharp}} \bs{Z}_{\! 0} \cdot \bs{n} = \int_{\Gamma_{*} } \bs{V}_{\! \Phi_{*}} \cdot \bs{n} + \int_{\Gamma_{\sharp}} \bs{W}_{\! 0} \cdot \bs{n} = \Phi_{*} - \Phi_{*} = 0 \qquad \forall t\in (0,T) \, .
 $$
 Since $\Omega_{\sharp}$ is a Lipschitz domain, for each $t\in(0,T)$, we can invoke \cite[Theorem III.3.3]{galdi2011introduction} to deduce the existence of another vector field $\bs{J}_{\! 0} \in H^1(0,T;H^k_0(\Omega_{\sharp})^3)$ such that $\nabla \cdot \bs{J}_{\! 0} = - \nabla \cdot \bs{Z}_{\! 0}$ in $\Omega_{\sharp}\times (0,T)$, and for all $t\in(0,T)$ and $C:=C(\Omega_{\sharp})>0$ we get
 \begin{equation}\label{fluxestimate1}
 	\| \bs{J}_{\! 0}(t) \|_{H^{k}(\Omega_{\sharp})} \leq C \| \nabla \cdot \bs{Z}_{\! 0}(t) \|_{H^{k-1}(\Omega_{\sharp})} \leq C \left( \| \bs{W}_{\! 0}(t) \|_{H^{k-1}(\Omega_{*} )} + \| \bs{V}_{\! \Phi_{*}}(t) \|_{H^{k-1}(\Omega_{2})} \right),
 \end{equation}
 where the last inequality in \eqref{fluxestimate1} follows from \eqref{divz0}. Again by \cite[Proposition 2.2.26]{Papageorgiou2005}, we obtain
 $$
 \| \bs{J}_{\! 0} \|_{H^1(0,T;H^{k}(\Omega_{\sharp}))} \leq C \| \nabla \cdot \bs{Z}_{\! 0} \|_{H^1(0,T;H^{k-1}(\Omega_{\sharp}))} \leq C \left( \| \bs{W}_{\! 0}\|_{H^1(0,T;H^{k-1}(\Omega_{*} ))} + \| \bs{V}_{\! \Phi_{*}}\|_{H^1(0,T;H^{k-1}(\Omega_{2}))} \right) \, .
 $$
 We are then in position to define the vector field $\bs{W}_{\! \! *} \in H^1(0,T;H^{k}(\Omega)^3)$ by
 \begin{equation}
 	\bs{W}_{\! \! *} :=\begin{cases}
 		\bs{W}_{\! 0} & \ \mathrm{ in } \ \ (\Omega_{0} \cup \Omega_{1})\times (0,T) \, ,  \\[4pt]
 		\bs{J}_{\! 0} + \zeta \bs{W}_{\! 0} + (1-\zeta) \bs{V}_{\! \Phi_{*}} & \ \mathrm{ in }  \ \ \Omega_{\sharp}\times (0,T) \, , \\[4pt]
 		\bs{V}_{\! \Phi_{*}} & \ \mathrm{ in } \ \ (\Omega_{2} \setminus \Omega_{\sharp})\times (0,T) \, ,
 	\end{cases}
 \end{equation}
 which, by the previous construction, complies with \eqref{stokesintrolemma}$_{1,2,3}$. Since $\bs{W}_{\! \! *} = \bs{V}_{\! \Phi_{*}}$ in $(\Omega_{2} \setminus \Omega_{\sharp})\times (0,T)$, there holds
 \begin{equation} \label{normder0}
 	\dfrac{\partial \bs{W}_{\! \! *}}{\partial \bs{n}} = \bs{0} \ \ \mbox{ on } \ \ \Gamma_{O}\times (0,T) \, .
 \end{equation}
 
It then remains to associate a scalar pressure to the vector field $\bs{W}_{\! \! *}$, allowing, in combination with \eqref{normder0}, for the equality \eqref{stokesintrolemma}$_4$ to be observed. Given $\sigma_{*} \in H^1(0,T;H^{k-3/2}(\Gamma_{O}))$, let $\widehat{\sigma} \in H^1(0,T; H^{1}(\Omega))$ be the unique weak solution to the following Zaremba problem in $\Omega$:
\begin{equation}\label{mixedbc}
	\left\{
	\begin{aligned}
		& \Delta \widehat{\sigma}=0 \ \ \mbox{ in } \ \ \Omega \times (0,T) \, , \\[5pt]
		& \widehat{\sigma}=0 \ \ \mbox{ on } \ \ (\Gamma_{I} \cup  \Gamma_{W}) \times (0,T) \, , \\[5pt]
		& \dfrac{\partial \widehat{\sigma}}{\partial \bs{n}} = - \sigma_{*} \ \ \mbox{ on } \ \ \Gamma_{O} \times (0,T) \, ,
	\end{aligned}
	\right.
\end{equation}
which satisfies the bound
\begin{equation}\label{mixedbc2}
\| \nabla \widehat{\sigma}(t) \|_{L^{2}(\Omega)} \leq C \| \sigma_{*}(t) \|_{H^{-1/2}(\Gamma_{O})} \qquad \forall t\in (0,T) \, .
\end{equation}
Recall that the outward unit normal to $\Gamma_{O}$, denoted here by $\bs{n}_{\Gamma_{O}} \in \mathbb{R}^{3}$, is constant. We then put $\Pi_{*}(t) := \nabla\widehat{\sigma}(t) \cdot \bs{n}_{\Gamma_{O}}$ for every $t \in (0,T)$, so that $\Pi_{*} \in H^1(0,T;L^{2}(\Omega))$ and, owing to \eqref{mixedbc2}, it verifies 
\begin{equation}\label{mixedbc3}
	\Pi_{*}= - \sigma_{*} \ \ \mbox{ on } \ \ \Gamma_{O} \times (0,T) \, ; \qquad
	\| \Pi_{*}(t) \|_{L^{2}(\Omega)} \leq C \| \sigma_{*}(t) \|_{H^{-1/2}(\Gamma_{O})} \qquad \forall t\in (0,T) \, ,
\end{equation}
closing the case $k=1$.

In the case when $k=2$ and $\sigma_{*} \in H^1(0,T;H^{1/2}(\Gamma_{O}))$, regularity results such as \cite[Corollary 8.3.2]{mazya2010elliptic} (see also \cite{ott2013mixed} for a relevant discussion) ensure that $\widehat{\sigma} \in H^1(0,T; H^{2}(\Omega))$, together with 
\begin{equation} \label{stimazaremba}
	\| \widehat{\sigma}(t) \|_{H^{2}(\Omega)} \leq C \| \sigma_{*}(t) \|_{H^{1/2}(\Gamma_{O})} \qquad \forall t\in (0,T) \, ,
\end{equation}
thereby implying that $\Pi_{*} \in H^1(0,T;H^{1}(\Omega))$ and
\begin{equation}\label{mixedbc4}
	\| \Pi_{*}(t) \|_{H^{1}(\Omega)} \leq C \| \sigma_{*}(t) \|_{H^{1/2}(\Gamma_{O})} \qquad \forall t\in (0,T) \, .
\end{equation}
Summarizing, there exists $\Pi_{*} \in H^1(0,T;H^{k-1}(\Omega))$ with $k\in\{1,2\}$ such that, in view of \eqref{mixedbc3}-\eqref{mixedbc4}, verifies
\begin{equation}\label{fluxestimate2}
\Pi_{*}= - \sigma_{*} \ \ \mbox{ on } \ \ \Gamma_{O} \times (0,T) \, ; \qquad	\| \Pi_{*}(t) \|_{H^{k-1}(\Omega)} \leq C \| \sigma_{*}(t) \|_{H^{k-\frac{3}{2}}(\Gamma_{O})} \qquad \forall t\in (0,T) \, .
\end{equation}

Now, from \eqref{poi23dd}-\eqref{flux03d}-\eqref{gflux} we observe that
 \begin{equation}\label{fluxestimate4}
 	\| \bs{V}_{\! \Phi_{*}} (t)\|_{H^{k}(\Omega_{2})} + \| \bs{V}_{\! \Phi_{*}}(t) \|_{H^{k-\frac{1}{2}}(\Gamma_{*} )} \leq C \Phi_{*}(t) \leq C \| \bs{g}_{*}(t) \|_{H^{k-\frac{1}{2}}(\Gamma_{I})} \,\quad \forall t\in (0,T) \,,
 \end{equation}
 and, as a consequence of \eqref{fluxestimate00}-\eqref{fluxestimate0}-\eqref{fluxestimate1}-\eqref{fluxestimate2}-\eqref{fluxestimate4}, we get the estimates 
 \begin{equation}\label{fluxestimate_}
 	\| \bs{W}_{\! \! *}(t) \|_{H^{k}(\Omega)} \leq C\| \bs{g}_{*}(t) \|_{H^{k-1/2}(\Gamma_{I})}, \qquad \hspace{17mm}\| \Pi_{*}(t) \|_{H^{k-1}(\Omega)} \leq C \| \sigma_{*}(t) \|_{H^{k-3/2}(\Gamma_{O})}  \,\quad \forall t\in(0,T) \, 
 \end{equation}
and
 \begin{equation}\label{2fluxestimate_}
	\| \bs{W}_{\! \! *} \|_{H^1(0,T;H^{k}(\Omega))} \leq C\| \bs{g}_{*} \|_{H^1(0,T;H^{k-1/2}(\Gamma_{I}))},  \qquad \| \Pi_{*} \|_{H^1(0,T;H^{k-1}(\Omega))} \leq C \| \sigma_{*} \|_{H^1(0,T;H^{k-3/2}(\Gamma_{O}))} \, .
\end{equation} 
This proves \eqref{fluxestimate} and concludes the proof.
\hfill\qed
 
\subsection{Proof of Theorem \ref{existence}}\label{proof}
	The proof of Theorem \ref{existence} is based on the Galerkin method and adapts techniques for the unsteady Navier-Stokes equations under classical boundary conditions, see e.g., \cite{Gal2000a,RRS2016,Tem1977b}. 
	
	\noindent
	\textit{Step 1: Existence of weak solutions.}
	Put $\bs{w}:=\bs{v}-\bs{W}_{\! \! *}$, $q:=p-\Pi_{*}$  and consider the auxiliary problem 
	\begin{equation}\label{ns_aux}
	\left\{
	\begin{array}{ll}
	\partial_t\bs{w}-\nu\Delta \bs{w}+[(\bs{w}+\bs{W}_{\! \! *})\cdot\nabla](\bs{w}+\bs{W}_{\! \! *})+\nabla q=-\partial_t\bs{W}_{\! \! *}+\nu \Delta\bs{W}_{\! \! *} -\nabla\Pi_{*}+\bs{f} \, ,    \ \ &\mbox{ in }  \ Q_T \, , \\[5pt]
	\nabla\cdot \bs{w}=0 \ \ &\mbox{ in } \  Q_T \, , \\[5pt]
	\bs{w}=\bs{0} \ \ &\hspace{-27mm}\mbox{ on } \  (\Gamma_{I}\cup \Gamma_W)\times (0,T) \, , \\[5pt]
		\bs{w}(\cdot,0)= \bs{v}_{0,\Omega}-\bs{W}_{\! \! *}(\cdot,0)|_\Omega:=\bs{w}_{0,\Omega} \ \ &\hspace{3mm}\mbox{ in } \  \Omega \,,\\[5pt]
	\partial_t\bs{w}+\nu \dfrac{\partial \bs{w}}{\partial \bs{n}} -q \bs{n} + \dfrac{1}{2} [(\bs{w}+\bs{W}_{\! \! *}) \cdot \bs{n}]^{-} \bs{w} =-\partial_t\bs{W}_{\! \! *}  \ \ &\hspace{-14mm}\mbox{ on } \  \Gamma_{O}\times (0,T) \, ,\\[5pt]
	\bs{w}(\cdot,0)=\bs{v}_{0,\Gamma_O}-\bs{W}_{\! \! *}(\cdot,0)|_{\Gamma_O}:=\bs{w}_{0,\Gamma_O} \ \ &\mbox{ on } \ \Gamma_O \, .
	\end{array}
	\right.
	\end{equation}
	By Lemma \ref{lemma0}, to prove the existence of a weak solution $\bs{v}$ spatially in $\VV$ is equivalent to show the existence of $\bs{w}$ spatially in $\VV_*$. Testing \eqref{ns_aux} by $\bs{\varphi} \in \mathcal{V}_{*}$ we find
	\begin{equation}\label{weak:evolutive}
	\begin{aligned}
	&\big(\partial_t\bs{w},\bs{\varphi}\big)_{\LL^2}+\nu \big(\nabla \bs{w}, \nabla \bs{\varphi}\big)_\Omega +\int_\Omega (\bs{w} \cdot \nabla)\bs{w}\cdot \bs{\varphi} + \int_\Omega(\bs{w} \cdot \nabla)\bs{W}_{\! \! *}\cdot  \bs{\varphi} + \int_\Omega(\bs{W}_{\! \! *} \cdot \nabla)\bs{w}\cdot \bs{\varphi} +\\&\dfrac{1}{2} \int_{\Gamma_{O}} [(\bs{w} + \bs{W}_{\! \! *}) \cdot \bs{n}]^{-}(\bs{w} \cdot \bs{\varphi}) 
	\!=\!-\big(\partial_t\bs{W}_{\! \! *}\,,\,\bs{\varphi}\big)_{\LL^2}+ \big\langle \nu \Delta\bs{W}_{\! \! *} -(\bs{W}_{\! \! *}\cdot \nabla)\bs{W}_{\! \! *}+\bs{f} \,,\,\bs{\varphi}\big\rangle,
	\end{aligned}
	\end{equation}
	with $\bs{w}_0 :=(\bs{w}_{0,\Omega},\bs{w}_{0,\Gamma_O})\in\mathcal{H}_*$ and $\langle\cdot,\cdot\rangle$ defined in \eqref{fomega}; we recall that, from Lemma \ref{lemma0} ($k=1$), the spatial regularity of $\bs{W}_{\! \! *}$ is $H^1(\Omega)^3$ so that $\Delta\bs{W}_{\! \! *}\in H^{-1}(\Omega)^3$ and $(\bs{W}_{\! \! *}\cdot \nabla)\bs{W}_{\! \! *}\in L^{3/2}(\Omega)^3$.

	We use, as basis in the Galerkin approximation, the eigenfunctions $\bs{u}_k$ satisfying \eqref{stokes} so that we can write the $n^{th}$-order approximation of $\bs{w}$ as
	\begin{equation}\label{approx}
	\bs{ w}^n(\bs{x},t):=\sum_{k=1}^n c^n_{k}(t)\,\bs{u}_k(\bs{x}) \qquad \forall (\bs{x},t) \in Q_{T} \, .
	\end{equation}
Let $\VV_*^n:=\text{span}\{\bs{u}_1,\bs{u}_2,\dots,\bs{u}_n\}$, so that $\bs{w}^n$ in \eqref{approx} satisfies \eqref{weak:evolutive}:
	\begin{equation}\label{ode0}
\begin{aligned}
&\big(\partial_t\bs{w}^n(t),\bs{\varphi}\big)_{\LL^2}\!+\!\nu \big(\nabla \bs{w}^n(t), \nabla \bs{\varphi}\big)_\Omega +\big( (\bs{w}^n(t)\! \cdot\! \nabla)\bs{w}^n(t), \bs{\varphi}\big)_\Omega\! +\! \big((\bs{w}^n(t)\! \cdot\! \nabla)\bs{W}_{\! \! *}(t), \bs{\varphi}\big)_\Omega+ \\& \big((\bs{W}_{\! \! *}(t) \cdot \nabla)\bs{w}^n(t), \bs{\varphi}\big)_\Omega + \dfrac{1}{2} \int_{\Gamma_{O}} [(\bs{w}^n(t) + \bs{W}_{\! \! *}(t)) \cdot \bs{n}]^{-}(\bs{w}^n(t)\cdot \bs{\varphi}) 
=\\&-\big(\partial_t\bs{W}_{\! \! *}(t),\bs{\varphi}\big)_{\LL^2} +\big\langle \nu \Delta\bs{W}_{\! \! *}(t) -(\bs{W}_{\! \! *}(t)\cdot \nabla)\bs{W}_{\! \! *}(t)+\bs{f}(t) \,,\,\bs{\varphi}\rangle\qquad \forall \bs{\varphi}\in\VV_*^n. 	
\end{aligned}
\end{equation}
Now we consider \eqref{ode0} with $\bs{\varphi}=\bs{u}_k\in\VV_*$, i.e. 
	\begin{equation}\label{ode}
	\begin{cases}
	\begin{aligned}
	&\big(\partial_t\bs{w}^n(t),\bs{u}_k\big)_{\LL^2}\!+\!\nu \big(\nabla \bs{w}^n(t), \nabla \bs{u}_k\big)_\Omega +\big( (\bs{w}^n(t)\! \cdot\! \nabla)\bs{w}^n(t), \bs{u}_k\big)_\Omega\! +\! \big((\bs{w}^n(t)\! \cdot\! \nabla)\bs{W}_{\! \! *}(t), \bs{u}_k\big)_\Omega+ \\& \big((\bs{W}_{\! \! *}(t) \cdot \nabla)\bs{w}^n(t), \bs{u}_k\big)_\Omega + \dfrac{1}{2} \int_{\Gamma_{O}} [(\bs{w}^n(t) + \bs{W}_{\! \! *}(t)) \cdot \bs{n}]^{-}(\bs{w}^n(t)\cdot \bs{u}_k) 
	=\\&-\big(\partial_t\bs{W}_{\! \! *}(t),\bs{u}_k\big)_{\LL^2} +\big\langle \nu \Delta\bs{W}_{\! \! *}(t) -(\bs{W}_{\! \! *}(t)\cdot \nabla)\bs{W}_{\! \! *}(t)+\bs{f}(t) \,,\,\bs{u}_k\rangle 	\quad\quad k=1,\dots,n,
	\end{aligned}\\
	\bs{w}^n(\cdot,0)=\bs{w}_0^n \, ,
	\end{cases}
	\end{equation}
	where  $\bs{w}_0^n:=\sum_{k=1}^n (\bs{w}_0,\bs{u}_k)\,\bs{u}_k$ is the projection
	(within $\HH_*$)  of $\bs{w}_0$ onto $\VV_*^n$. By the classical theory of ODE's systems, \eqref{ode} admits a unique (local) solution.

	Being $c^n_k(t)$ smooth coefficients, we multiply  \eqref{ode}$_1$ by $c^n_k(t)$
	and we sum over $k$, getting
	\begin{equation}\label{eq000}
	\begin{split}
	\dfrac{1}{2}	&\frac{d}{dt}\|\bs{w}^n(t)\|^2_{\LL^2}+\nu\|\nabla \bs{w}^n(t)\|^2_{L^2(\Omega)}\!=\!-\big((\bs{w}^n(t)\!\cdot\!\nabla) \bs{w}^n(t), \bs{w}^n(t)\big)_\Omega-\! \big((\bs{w}^n(t) \!\cdot\! \nabla)\bs{W}_{\! \! *}(t),  \bs{w}^n(t)\big)_\Omega\\& - \big((\bs{W}_{\! \! *}(t) \cdot \nabla)\bs{w}^n(t), \bs{w}^n(t) \big)_\Omega -\dfrac{1}{2} \int_{\Gamma_{O}} [(\bs{w}^n(t)+\bs{W}_{\! \! *}(t)) \cdot \bs{n}]^{-}|\bs{w}^n(t)|^2\\&-\big(\partial_t\bs{W}_{\! \! *}(t),\bs{w}^n(t)\big)_{\LL^2}+\big\langle\nu \Delta\bs{W}_{\! \! *}(t)-(\bs{W}_{\! \! *}(t)\cdot \nabla)\bs{W}_{\! \! *}(t)+\bs{f}(t)\,,\,\bs{w}^n(t)\big\rangle.
	\end{split}
	\end{equation}
	Integrating by parts it holds
	\begin{equation} \label{vnonlin}
	\begin{aligned}
	\big( (\bs{w}^{n} \cdot \nabla)\bs{w}^{n} , \bs{w}^{n}\big)_\Omega = \dfrac{1}{2} \int_{\Gamma_{O}} | \bs{w}^{n} |^{2}(\bs{w}^{n} \cdot \bs{n}) \qquad 	\big( (\bs{W}_{\! \! *} \cdot \nabla)\bs{w}^{n} , \bs{w}^{n}\big)_\Omega = \dfrac{1}{2} \int_{\Gamma_{O}} | \bs{w}^{n} |^{2}(\bs{W}_{\! \! *}\cdot \bs{n}),
	\end{aligned}
	\end{equation}
 then \eqref{eq000} becomes
	\begin{equation}\label{eq001}
	\begin{split}
	\dfrac{1}{2}&	\frac{d}{dt}\|\bs{w}^n(t)\|^2_{\LL^2}+\nu\|\nabla \bs{w}^n(t)\|^2_{L^2(\Omega)}=- \big((\bs{w}^n(t) \cdot \nabla)\bs{W}_{\! \! *}(t),  \bs{w}^n(t)\big)_\Omega\\&-\big(\partial_t\bs{W}_{\! \! *}(t)\,,\,\bs{w}^n(t)\big)_{\LL^2}+\big\langle \nu \Delta\bs{W}_{\! \! *}(t)-(\bs{W}_{\! \! *}(t)\cdot \nabla)\bs{W}_{\! \! *}(t)+\bs{f}(t)\,,\,\bs{w}^n(t)\big\rangle\\&-\dfrac{1}{2} \int_{\Gamma_{O}} \big\{(\bs{w}^n(t)+\bs{W}_{\! \! *}(t))\cdot \bs{n}+[(\bs{w}^n(t)+\bs{W}_{\! \! *}(t)) \cdot \bs{n}]^{-}\big\}|\bs{w}^n(t)|^2 \\[2mm]
	\leq& \big|\big((\bs{w}^n(t) \cdot \nabla)\bs{W}_{\! \! *}(t),  \bs{w}^n(t)\big)_\Omega\big|+\big|\big(\partial_t\bs{W}_{\! \! *}(t)\,,\,\bs{w}^n(t)\big)_{\LL^2}\big|+\\&+\big|\big\langle \nu \Delta\bs{W}_{\! \! *}(t)-(\bs{W}_{\! \! *}(t)\cdot \nabla)\bs{W}_{\! \! *}(t)+\bs{f}(t)\,,\,\bs{w}^n(t)\big\rangle\big|,
	\end{split}
	\end{equation}
where we used 
$$
\int_{\Gamma_{O}} \big\{(\bs{w}^n(t)+\bs{W}_{\! \! *}(t))\cdot \bs{n}+[(\bs{w}^n(t)+\bs{W}_{\! \! *}(t)) \cdot \bs{n}]^{-}\big\}|\bs{w}^n(t)|^2=\int_{\Gamma_{O}} [(\bs{w}^n(t)+\bs{W}_{\! \! *}(t)) \cdot \bs{n}]^{+}|\bs{w}^n(t)|^2 \geq 0.
$$

In order to bound the nonlinear term on the right-hand side of \eqref{eq001}, recall the Gagliardo-Nirenberg inequality: 
\begin{equation}
	\begin{split}
	\|\bs{v}\|_{L^4(\Omega)}\leq C
		\|\bs{v}\|^{1/4}_{L^2(\Omega)}\|\nabla \bs{v}\|^{3/4}_{L^2(\Omega)}
\qquad \forall \bs{v}\in  \VV_*,\quad (C:=C(\Omega)>0).
	\end{split}
\end{equation}
From Lemma \ref{lemma0} (with $k=1$) we know that $\bs{W}_{\! \! *}\in H^1(0,T;\VV)$, so that the Young inequality in the form $|ab|\leq \nu\frac{a^{4/3}}{4}+\frac{Cb^4}{\nu^3}$, $C\in\R^+$, entails
	\begin{equation}\label{eq002}
	\begin{split}
		\big|\big((\bs{w}^n \cdot \nabla)\bs{W}_{\! \! *},  \bs{w}^n\big)_\Omega\big|&\leq  \|\bs{w}^n\|^2_{L^4(\Omega)}\|\nabla\bs{W}_{\! \! *}\|_{L^2(\Omega)}\\&\leq \|\nabla\bs{W}_{\! \! *}\|_{L^2(\Omega)}
			\|\bs{w}^n\|^{1/2}_{L^2(\Omega)}\|\nabla \bs{w}^n\|^{3/2}_{L^2(\Omega)} \\&\leq \frac{\nu}{4}\|\nabla\bs{w}^n\|_{L^2(\Omega)}^2+\frac{C}{\nu^{3}}\|\nabla\bs{W}_{\! \! *}\|^{4}_{L^2(\Omega)}\|\bs{w}^n\|_{L^2(\Omega)}^2.
	\end{split}
\end{equation}
Using Schwartz and Young inequalities, we find	
		\begin{equation}
		\begin{split}
			\Big|\big(\partial_t\bs{W}_{\! \! *}\,,\,\bs{w}^n\big)_{\LL^2}\Big|\leq \frac{1}{2}\|\bs{w}^n\|^2_{\mathcal{L}^2}+\frac{1}{2}\|\partial_t\bs{W}_{\! \! *}\|_{\mathcal{L}^2}^2 \, ,
		\end{split}
	\end{equation}
as well as
	\begin{equation}\label{eq0002}
		\begin{split}
	&\big|\big\langle \nu \Delta\bs{W}_{\! \! *}-(\bs{W}_{\! \! *}\cdot \nabla)\bs{W}_{\! \! *}+\bs{f}\,,\,\bs{w}^n\big\rangle\big|\\\leq& C \big(\nu\|\Delta\bs{W}^*\|_{H^{-1}(\Omega)}\!+\!\|(\bs{W}_{\! \! *}\!\cdot\! \nabla)\bs{W}_{\! \! *}\|_{H^{-1}(\Omega)}\!+\!\|\bs{f}\|_{H^{-1}(\Omega)}\big)\|\nabla\bs{w}^n\|_{L^2(\Omega)}\\\leq& \frac{\nu}{4}\|\nabla\bs{w}^n\|_{L^2(\Omega)}^2+\frac{C}{\nu}\big(\nu^2\|\Delta\bs{W}^*\|^2_{H^{-1}(\Omega)}+\|(\bs{W}_{\! \! *}\cdot \nabla)\bs{W}_{\! \! *}\|^2_{H^{-1}(\Omega)}+\|\bs{f}\|^2_{H^{-1}(\Omega)}\big),
	\end{split}
	\end{equation}
	obtaining $C:=C(\Omega)>0$ such that 
	\begin{equation}\label{eq003}
	\begin{split}
	\frac{d}{dt}\|\bs{w}^n\|^2_{\mathcal{L}^2}+\nu\|\nabla \bs{w}^n\|^2_{L^2(\Omega)}\leq C
	&\bigg[\left(1+\dfrac{\|\nabla\bs{W}_{\! \! *}\|^{4}_{L^2(\Omega)}}{\nu^{3}}\right)\|\bs{w}^n\|^2_{\mathcal{L}^2}+\|\partial_t\bs{W}_{\! \! *}\|_{\mathcal{L}^2}^2\\&+\nu\|\Delta\bs{W}^*\|^2_{H^{-1}(\Omega)}+\tfrac{1}{\nu}\|(\bs{W}_{\! \! *}\cdot \nabla)\bs{W}_{\! \! *}\|^2_{H^{-1}(\Omega)}+\tfrac{1}{\nu}\|\bs{f}\|^2_{H^{-1}(\Omega)}\bigg].
	\end{split}
	\end{equation}
	Integrating on $(0,t)$ and applying the Gronwall Lemma, we infer the existence of a positive constant $\mathcal{M}:=\mathcal{M}(\Omega, \nu,T,\bs{w}_0,\bs{W}^*,\bs{f})$ such that, for all $t\in (0,T)$,
	\begin{equation}\label{eq004}
	\begin{split}
		\|\bs{w}^n(t)\|^2_{\mathcal{L}^2}\leq& 	\big[\|\bs{w}_0\|^2_{\mathcal{L}^2}+C\big(\|\partial_t\bs{W}_{\! \! *}\|_{L^2(0,T;\LL^2)}^2+\nu\|\Delta\bs{W}_{\! \! *}\|_{L^2(0,T;H^{-1}(\Omega))}^2\\&+\tfrac{1}{\nu}\|(\bs{W}_{\! \! *}\cdot \nabla)\bs{W}_{\! \! *}\|^2_{L^2(0,T;H^{-1}(\Omega))}+\tfrac{1}{\nu}\|\bs{f}\|_{L^2(0,T;H^{-1}(\Omega))}^2\big)\big]\cdot\\&\hspace{2mm}\cdot
		\exp\left[CT \left(1+\tfrac{1}{\nu^{3}}\|\nabla\bs{W}_{\! \! *}(t)\|^{4}_{L^\infty(0,T;L^2(\Omega))}\right)\right]:=\mathcal{M},
	\end{split}
	\end{equation}
	since $\|\bs{w}^n_0\|_{\LL^2}\leq \|\bs{w}_0\|_{\LL^2}$.
	This gives a \textit{uniform bound} for $\bs{w}^n$ in $L^\infty(0,T;\HH_*)$, from which we also infer the boundedness of $\bs{w}^n$ in $L^2(0,T;\VV_*)$, indeed
	\small{
		\begin{equation}\label{eq0044}
	\begin{split}
	\nu\int_0^T\|\nabla \bs{w}^n(t)&\|^2_{L^2(\Omega)}dt\leq\|\bs{w}_0\|^2_{\LL^2}+ C\big[T\mathcal{M}
	\big(1+\tfrac{1}{\nu^{3}}\|\nabla\bs{W}_{\! \! *}\|^{4}_{L^\infty(0,T;L^2(\Omega))}\big)+\|\partial_t\bs{W}_{\! \! *}\|_{L^2(0,T;\LL^2)}^2+\\&\nu\|\Delta\bs{W}_{\! \! *}\|_{L^2(0,T;H^{-1}(\Omega))}^2+\tfrac{1}{\nu}\|(\bs{W}_{\! \! *}\cdot \nabla)\bs{W}_{\! \! *}\|^2_{L^2(0,T;H^{-1}(\Omega))}+\tfrac{1}{\nu}\|\bs{f}\|_{L^2(0,T;H^{-1}(\Omega))}^2\big]:=M,
	\end{split}
	\end{equation}}\normalsize
with $M:= M(\Omega, \nu,T,\bs{w}_0,\bs{W}^*,\bs{f})$. We apply a diagonal argument to choose a subsequence of $\bs{w}^n$ such that, due to the compact embeddings $H^1(\Omega)\subset L^2(\Omega)$ and $H^{1/2}(\partial \Omega) \subset L^2(\partial\Omega)$, it verifies for any $T >0$
	\begin{equation}\label{conv}
	\bs{w}^n\rightarrow\bs{w}\quad \text{in }L^2(0,T;\mathcal{H}_*),\qquad 	\bs{w}^n\overset{\ast}{\rightharpoonup}\bs{w}\quad \text{in }L^\infty(0,T;\mathcal{H}_*),\qquad \nabla\bs{w}^n\rightharpoonup\nabla\bs{w}\quad \text{in }L^2(Q_T)^{3\times 3}.
\end{equation}
	This implies the existence of a weak solution $\bs{w}\in L^\infty(0,T;\HH_*)\cap L^2(0,T;\VV_*)$, and, in turn, this yields $ \bs{v}=\bs{w}+\bs{W}_{\! \! *}\in L^\infty(0,T;\HH)\cap L^2(0,T;\VV)$, being  $\bs{W}_{\! \! *}\in H^1(0,T; \VV)\subset \mathcal{C}([0,T];\VV)$.

We notice that  $$\left(\partial_t\bs{w}^n,\,\bs{\psi}\right)_{\LL^2}=\left(\partial_t\bs{w}^n,\,\mathbb{P}\bs{\psi}\right)_{\LL^2}\quad \forall\bs{\psi}\in \VV_*,$$
where $\mathbb{P}$ is the projection onto $\VV_*^n$; from the Gagliardo-Nirenberg inequality, we have 
\begin{equation}
	\begin{split}
	\big|\big( (\bs{w}^n\! \cdot\! \nabla)\bs{w}^n, \mathbb{P}\bs{\psi}\big)_\Omega\big|\leq&\|\bs{w}^n\|_{L^3(\Omega)}\|\nabla\bs{w}^n\|_{L^2(\Omega)}\|\mathbb{P}\bs{w}^n\|_{L^6(\Omega)}\\\leq& C\|\bs{w}^n\|^{1/2}_{L^2(\Omega)}\|\nabla\bs{w}^n\|^{3/2}_{L^2(\Omega)}\|\nabla\mathbb{P}\bs{\psi}\|_{L^2(\Omega)}\\=&C\|\bs{w}^n\|^{1/2}_{L^2(\Omega)}\|\nabla\bs{w}^n\|^{3/2}_{L^2(\Omega)}\|\mathbb{P}\bs{\psi}\|_{\VV_*} ,\\
	\bigg|\int_{\Gamma_{O}} [(\bs{w}^n(t) + \bs{W}_{\! \! *}) \cdot \bs{n}]^{-}(\bs{w}^n\cdot \mathbb{P}\bs{\psi}) \bigg|\leq& \|\bs{w}^n + \bs{W}_{\! \! *}\|_{L^4(\Gamma_O)}\|\bs{w}^n\|_{L^2(\Gamma_O)}\|\mathbb{P}\bs{\psi}\|_{L^4(\Gamma_O)}\\\leq& C\|\nabla\bs{w}^n + \nabla\bs{W}_{\! \! *}\|_{L^2(\Omega)}\|\bs{w}^n\|_{\LL^2}\|\mathbb{P}\bs{\psi}\|_{\VV_*}.
	\end{split}
\end{equation}
Now we consider \eqref{ode0} with $\bs{\varphi}=\mathbb{P}\bs{\psi}$; collecting these and previous estimates, we get
	\begin{equation}\label{odestime}
\begin{aligned}
\big|\big(\partial_t\bs{w}^n(t),\mathbb{P}\bs{\psi}\big)_{\LL^2}\big|\!\leq C \bigg( &\nu\|\nabla\bs{w}^n\|_{L^2(\Omega)}+\|\bs{w}^n\|^{1/2}_{L^2(\Omega)}\|\nabla\bs{w}^n\|^{3/2}_{L^2(\Omega)}+\\&\|\bs{w}^n\|^{1/2}_{L^2(\Omega)}\|\nabla\bs{w}^n\|^{1/2}_{L^2(\Omega)}\|\nabla\bs{W}_{\! \! *}\|_{L^2(\Omega)}+\|\bs{W}_{\! \! *}\|^{1/2}_{L^2}\|\nabla\bs{W}_{\! \! *}\|^{1/2}_{L^2(\Gamma_O)}\|\nabla\bs{w}^n\|_{L^2(\Omega)}+\\&\|\nabla\bs{w}^n + \nabla\bs{W}_{\! \! *}\|_{L^2(\Omega)}\|\bs{w}^n\|_{\LL^2}+
\|\partial_t\bs{W}_{\! \! *}\|_{\LL^2(\Omega)}+\\&\nu\|\Delta\bs{W}^*\|_{H^{-1}(\Omega)}+\|(\bs{W}_{\! \! *}\cdot \nabla)\bs{W}_{\! \! *}\|_{H^{-1}(\Omega)}+\|\bs{f}\|_{H^{-1}(\Omega)}\bigg)\|\mathbb{P}\bs{\psi}\|_{\VV_*}.
\end{aligned}
\end{equation}
Therefore, 
	$$
	\langle\partial_t\bs{w}^n,\bs{\psi}\rangle_{\VV_*',\VV_*}=\left(\partial_t\bs{w}^n,\bs{\psi}\right)_{\LL^2}=\left(\partial_t\bs{w}^n,\mathbb{P}\bs{\psi}\right)_{\LL^2},
	$$
	and, since $\|\mathbb{P}\bs{\psi}\|_{\VV_*}\leq \|\bs{\psi}\|_{\VV_*}$, from \eqref{odestime}
	we infer
		\begin{equation}\label{odestime2}
	\begin{aligned}
\|\partial_t\bs{w}^n\|_{\VV_*'}\leq C \bigg( &\nu\|\nabla\bs{w}^n\|_{L^2(\Omega)}+\|\bs{w}^n\|^{1/2}_{L^2(\Omega)}\|\nabla\bs{w}^n\|^{3/2}_{L^2(\Omega)}+\\&\|\bs{w}^n\|^{1/2}_{L^2(\Omega)}\|\nabla\bs{w}^n\|^{1/2}_{L^2(\Omega)}\|\nabla\bs{W}_{\! \! *}\|_{L^2(\Omega)}+\|\bs{W}_{\! \! *}\|^{1/2}_{L^2}\|\nabla\bs{W}_{\! \! *}\|^{1/2}_{L^2(\Gamma_O)}\|\nabla\bs{w}^n\|_{L^2(\Omega)}+\\&\|\nabla\bs{w}^n + \nabla\bs{W}_{\! \! *}\|_{L^2(\Omega)}\|\bs{w}^n\|_{\LL^2}+
	\|\partial_t\bs{W}_{\! \! *}\|_{\LL^2(\Omega)}+\\&\nu\|\Delta\bs{W}^*\|_{H^{-1}(\Omega)}+\|(\bs{W}_{\! \! *}\cdot \nabla)\bs{W}_{\! \! *}\|_{H^{-1}(\Omega)}+\|\bs{f}\|_{H^{-1}(\Omega)}\bigg).
	\end{aligned}
	\end{equation}
	Using the previous estimates on $\bs{w}^n$ we conclude that $\partial_t\bs{w}^n$ is bounded in $L^{4/3}(0,T;\VV'_*)$, so, up to a subsequence, we obtain $\partial_t\bs{w} \in L^{4/3}(0,T;\VV')$, and,  in turn, $\partial_t\bs{v} \in L^{4/3}(0,T;\VV')$.

	\medskip
	\noindent
	\textit{Step 2: Strong energy inequality.}
	To prove \eqref{sei}, we observe that $\bs{w}^n(t)\rightarrow\bs{w}(t)$ strongly in $\mathcal{L}^2$ for almost all $t\in[0,T)$, so in particular 
	\begin{equation}\label{L2_strong}
	\|\bs{w}^n(t)\|_{\LL^2}^2\rightarrow\|\bs{w}(t)\|_{\LL^2}^2.
	\end{equation}
	Integrating the equation in \eqref{eq001} over $(s,t)$ we find
	\small{
		\begin{equation}\label{eq008}
		\begin{split}
			&\dfrac{\|\bs{w}^n(t)\|^2_{\LL^2}}{2}+\nu\int_s^t\|\nabla \bs{w}^n(\tau)\|^2_{L^2(\Omega)}d\tau\leq \dfrac{\|\bs{w}^n(s)\|^2_{\LL^2}}{2}- \int_s^t\big((\bs{w}^n(\tau) \cdot \nabla)\bs{W}_{\! \! *},  \bs{w}^n(\tau)\big)_\Omega d\tau\\&-\int_s^t\big(\partial_t\bs{W}_{\! \! *}(\tau)\,,\,\bs{w}^n(\tau)\big)_{\LL^2} d\tau+\int_s^t\big\langle\nu \Delta\bs{W}_{\! \! *}(\tau)-(\bs{W}_{\! \! *}(\tau)\cdot \nabla)\bs{W}_{\! \! *}(\tau)+\bs{f}(\tau)\,,\,\bs{w}^n(\tau)\big\rangle d\tau.
		\end{split}
	\end{equation}}
\normalsize	We check the convergence of the right-hand side terms; firstly, we observe that 
	\begin{equation}\label{ineq0}
	\begin{split}
	&\big((\bs{w}^n \cdot \nabla)\bs{W}_{\! \! *},  \bs{w}^n\big)_\Omega -\big((\bs{w} \cdot \nabla)\bs{W}_{\! \! *},  \bs{w}\big)_\Omega=\big((\bs{w}^n \cdot \nabla)\bs{W}_{\! \! *},  \bs{w}^n-\bs{w}\big)_\Omega -\big([(\bs{w}-\bs{w}^n) \cdot \nabla]\bs{W}_{\! \! *},  \bs{w}\big)_\Omega;
		\end{split}
\end{equation}
hence, as $n\rightarrow+\infty$ applying \eqref{conv}, H\"older, Sobolev and interpolation inequalities, we find  $C:=C\big(\Omega\big)>0$ such that
\small{
	\begin{equation}\label{ineq2}
		\begin{split}
				\bigg|&\int_s^t\big((\bs{w}^n(\tau) \cdot \nabla)\bs{W}_{\! \! *},  \bs{w}^n(\tau)-\bs{w}(\tau)\big)_\Omega d\tau\bigg|\\&\leq 
				\int_s^t\|\bs{w}^n(\tau)\|_{L^6(\Omega)} \|\nabla\bs{W}_{\! \! *}(\tau)\|_{L^2(\Omega)}  \|\bs{w}^n(\tau)-\bs{w}(\tau)\|_{L^3(\Omega)} d\tau\\&\leq  C\|\nabla\bs{W}_{\! \! *}\|_{L^\infty(0,T;L^2(\Omega))}\int_s^t\left(\|\nabla\bs{w}^n(\tau)\|_{L^2(\Omega)}\|\nabla\bs{w}^n(\tau)-\nabla\bs{w}(\tau)\|^{1/2}_{L^2(\Omega)}\|\bs{w}^n(\tau)-\bs{w}(\tau)\|^{1/2}_{L^2(\Omega)}\right)d\tau\\&\leq  C\|\nabla\bs{W}_{\! \! *}\|_{L^\infty(0,T;L^2(\Omega))}\|\nabla\bs{w}^n\|_{L^2(0,T;L^2(\Omega))}\|\nabla\bs{w}^n-\nabla\bs{w}\|^{1/2}_{L^2(0,T;L^2(\Omega))}\|\bs{w}^n-\bs{w}\|^{1/2}_{L^2(0,T;L^2(\Omega))}\rightarrow 0,\\
				\bigg|&\int_s^t\big([(\bs{w}(\tau)-\bs{w}^n(\tau)) \cdot \nabla]\bs{W}_{\! \! *},  \bs{w}(\tau)\big)_\Omega d\tau\bigg|\\&\leq  \int_s^t\|\bs{w}^n(\tau)-\bs{w}(\tau)\|_{L^3(\Omega)}\|\nabla\bs{W}_{\! \! *}(\tau)\|_{L^2(\Omega)}\|\bs{w}(\tau)\|_{L^6(\Omega)}d\tau\\&\leq  C\|\nabla\bs{W}_{\! \! *}\|_{L^\infty(0,T;L^2(\Omega))}\|\nabla\bs{w}\|_{L^2(0,T;L^2(\Omega))}\|\nabla\bs{w}^n-\nabla\bs{w}\|^{1/2}_{L^2(0,T;L^2(\Omega))}\|\bs{w}^n-\bs{w}\|^{1/2}_{L^2(0,T;L^2(\Omega))}\rightarrow 0.
		\end{split}
	\end{equation}}
\normalsize
	The linear terms on the right-hand side pass to the limit due to \eqref{conv}; for instance, we have
		\begin{equation}\label{ineq}
		\begin{split}
			&\bigg|\int_s^t\big(\partial_t\bs{W}_{\! \! *}(\tau)\,,\,\bs{w}^n(\tau)-\bs{w}(\tau)\big)_{\LL^2}\bigg|\leq \|\partial_t\bs{W}_{\! \! *}\|_{L^2(0,T;\mathcal{L}^2)}\|\bs{w}^n-\bs{w}\|_{L^2(0,T;\mathcal{L}^2)}\rightarrow 0.
		\end{split}
	\end{equation}
	Therefore, we pass to the limit on both sides of \eqref{eq008} and, due to \eqref{conv}, \eqref{L2_strong} we obtain \eqref{sei}.
Since $\bs{w}^n(0)$  is the projection of $\bs{w}_0$, it holds $\|\bs{w}^n(0)\|_{\LL^2}^2\rightarrow\|\bs{w}(0)\|_{\LL^2}^2$, and \eqref{sei} is true also in $s=0$. \hfill\qed

\subsection{Recovery of the pressure for weak solutions} \label{recpressuresection}
Now, we consider the pressure recovery, starting from the weak formulation:
\begin{equation}\label{odep}
\begin{cases}
\begin{aligned}
\displaystyle  	\frac{ d }{dt}& \big(\bs{v} (t),\bs{\varphi}\big)_{\LL^2} + \nu \big(\nabla \bs{v}(t), \nabla \bs{\varphi} \big)_\Omega  +\big( (\bs{v}(t)\! \cdot\! \nabla)\bs{v}(t), \bs{\varphi} \big)_\Omega\! \\
&  \displaystyle  + \dfrac{1}{2} \int_{\Gamma_{O}} [\bs{v}(t)  \cdot \bs{n}]^{-}(\bs{w}(t)\cdot \bs{\varphi}) 
= \big\langle \bs{f}(t) \,,\,\bs{\varphi} \rangle \, + \langle\sigma_{*}(t)\bs{n}\,,\, \bs{\varphi} \rangle_{H^{-1/2}(\Gamma_O), \, H_{00}^{1/2}(\Gamma_O)},
\end{aligned} \\
\displaystyle	 \big(\bs{v} (0),\bs{\varphi}\big)_{\LL^2} =  \big(\bs{v}_0,\bs{\varphi}\big)_{\LL^2}\,  
\end{cases} \forall \bs{\varphi} \in \mathcal{V}_{*} \, .
\end{equation}
Recalling the space in \eqref{U}, the adjoint of the divergence operator $\text{div}_{\Omega} : \mathcal{U}_{*} \longrightarrow L^{2}(\Omega)$ is defined by the map $\text{div}^{*}_{\Omega} : L^{2}(\Omega) \longrightarrow \mathcal{U}'_*$ such that
\begin{equation}
\langle \text{div}^{*}_{\Omega}(q) , \bs{\varphi} \rangle_{*} := \int_{\Omega} q (\nabla \cdot \bs{\varphi}) \qquad \forall q \in L^{2}(\Omega) \, , \quad \forall \bs{\varphi} \in \mathcal{U}_{*} \, .
\label{div*def}
\end{equation}
Therefore, by the Closed Range Theorem of Banach, we have
\begin{equation}
\text{Range}(\text{div}^{*}_{\Omega}) = [ \text{Ker}(\text{div}_{\Omega})]^{\circ} = (\mathcal{V}_{*})^{\circ} := \{ \bs{F} \in
\mathcal{U}'_* \ | \ \langle  \bs{F}, \bs{\varphi} \rangle_{*} = 0 \quad \forall \bs{\varphi} \in \mathcal{V}_{*}\, \} \, .
\label{relU}
\end{equation}

Given $\bs{v}  \in {\mathcal V}$ and $\bs{w}  \in {\mathcal V}_*$, the applications
$$
\begin{aligned}
& \bs{\varphi} \in \mathcal{U}_{*} \longmapsto \big(\bs{v},\bs{\varphi}\big)_{\LL^2} \, , \\[6pt]
& \bs{\varphi} \in \mathcal{U}_{*} \longmapsto \int_{\Omega} \nabla \bs{v} : \nabla \bs{\varphi}  = : \langle {\mathcal G}( \bs{v}) ,\bs{\varphi}  \rangle_{*} \, , \\[6pt]
& \bs{\varphi} \in \mathcal{U}_{*} \longmapsto \int_{\Omega} (\bs{v} \cdot \nabla) \bs{v} \cdot \bs{\varphi}  + \dfrac{1}{2} \int_{\Gamma_{O}} [\bs{v} \cdot \bs{n}]^{-} \bs{w} \cdot \bs{\varphi} = : \langle {\mathcal B}( \bs{v} , \bs{w} ) , \bs{\varphi} \rangle_{*}\, , \\[6pt]
& \bs{\varphi} \in \mathcal{U}_{*} \longmapsto \big\langle \bs{f} \,,\,\bs{\varphi} \rangle + \langle\sigma_{*}\bs{n}\,,\, \bs{\varphi} \rangle_{H^{-1/2}(\Gamma_O), \, H_{00}^{1/2}(\Gamma_O)} =: \langle \mathcal F , \bs{\varphi} \rangle_{*} \, ,
\end{aligned}
$$
clearly define linear continuous functionals on $\mathcal{U}_{*}$. Then, \eqref{odep} may be written as
$$
\big(\bs{v},\bs{\varphi}\big)_{\LL^2} - \big(\bs{v}_0,\bs{\varphi}\big)_{\LL^2} + \nu \, \int_0^t \langle {\mathcal G}\left( \bs{v}\right) ,\bs{\varphi}  \rangle_{*} + \int_0^t  \langle {\mathcal B}\left( \bs{v} , \bs{w} \right) , \bs{\varphi} \rangle_{*}  - \int_0^t \langle {\mathcal F} , \bs{\varphi} \rangle_{*}  = 0\, ,\quad \forall \bs{\varphi} \in \mathcal{V}_{*} \, ,
$$
for almost any $t \in (0,T)$. Let 
$$
\langle  \bs{F} (t), \bs{\varphi}  \rangle_{*}  := \big(\bs{v} (t),\bs{\varphi}\big)_{\LL^2} - \big(\bs{v}_0,\bs{\varphi}\big)_{\LL^2} + \nu \, \int_0^t \langle {\mathcal G}\left( \bs{v}(t)\right) ,\bs{\varphi}  \rangle_{*}  + \int_0^t  \langle {\mathcal B}\left( \bs{v}(t) , \bs{w}(t) \right) , \bs{\varphi} \rangle_{*}  - \int_0^t \langle {\mathcal F}(t) , \bs{\varphi} \rangle_{*} \, ,
$$
for $\bs{\varphi} \in \mathcal{U}_{*}$. Then 
$$
\langle  \bs{F} (t), \bs{\varphi}  \rangle_{*}  = 0,  \quad \forall \bs{\varphi}  \in \mathcal{V}_{*}, \quad \text{ for a.a. } t \in (0,T), 
$$
and then, by \eqref{relU},  we have $\bs{F} (t) \in \text{Range}(\text{div}^{*}_{\Omega})$ for a.a. $t \in (0,T)$. Consequently, from \eqref{div*def}, there exists $p(t) \in L^2(\Omega)$ such that 
$$
\langle  \bs{F} (t), \bs{\varphi}  \rangle_{*}  = \int_{\Omega} p(t) (\nabla \cdot \bs{\varphi}),  \quad \forall \bs{\varphi}  \in \mathcal{U}_{*}, \quad \text{ for a.a. } t \in (0,T). 
$$
To characterize more precisely the time dependence of $p$, we follow the approach of \cite{Neustupa2020}. Note that ${\mathcal G}( \bs{v})  \in L^2(0,T;{\mathcal U}'_*)$, ${\mathcal B}( \bs{v},\bs{w})  \in L^{4/3}(0,T;{\mathcal U}'_*)$ and ${\mathcal F} \in L^2(0,T;{\mathcal U}'_*)$. 

Consider the decompositions 
$$
\mathcal{U}_{*} = \mathcal{V}_{*} \oplus \mathcal{V}_{*}^\perp \cong \text{Ker}(\text{div}_{\Omega})  \oplus \text{Range}(\text{div}^*_{\Omega}) \, ,
$$
 the projection $Q_{\mathcal{V}_{*}^\perp}$ from $\mathcal{U}_{*}$ onto $\mathcal{V}_{*}^\perp$ and its adjoint projection $Q^*_{\mathcal{V}_{*}^\perp}$ in $\mathcal{U}'_*$, which, in virtue of the Closed Range Theorem and \eqref{relU}, satisfy
$$
\text{Range}(Q^*_{\mathcal{V}_{*}^\perp})  = [\text{Ker}(Q_{\mathcal{V}_{*}^\perp})]^\circ  = 
(\mathcal{V}_{*})^\circ  = [\text{Ker}(\text{div}_{\Omega})]^\circ = \text{Range}(\text{div}^{*}_{\Omega}) .
$$
Since $\bs{F} (t) \in \text{Range}(\text{div}^{*}_{\Omega})$ for a.a. $t \in (0,T)$, we have 
$$
\langle  \bs{F} (t), (I_{\mathcal{U}_{*}} - Q_{\mathcal{V}_{*}^\perp} ) \bs{\varphi}  \rangle_{*}  = 0,  \quad \forall \bs{\varphi}  \in \mathcal{U}_{*}, \quad \text{ for a.a. } t \in (0,T), 
$$
$$
\iff \langle  \bs{F} (t),\bs{\varphi}  \rangle_{*}  = \langle  \bs{F} (t), Q_{\mathcal{V}_{*}^\perp} \bs{\varphi}  \rangle_{*} ,  \quad \forall \bs{\varphi}  \in \mathcal{U}_{*}, \quad \text{ for a.a. } t \in (0,T), 
$$
$$
\iff \langle  \bs{F} (t),\bs{\varphi}  \rangle_{*}  = \langle  Q^*_{\mathcal{V}_{*}^\perp}  \bs{F} (t), \bs{\varphi}  \rangle_{*} ,  \quad \forall \bs{\varphi}  \in \mathcal{U}_{*}, \quad \text{ for a.a. } t \in (0,T), 
$$
$$
\implies \langle  \bs{F}' ,\bs{\varphi}  \rangle_{*}  = \langle  Q^*_{\mathcal{V}_{*}^\perp}  \bs{F}', \bs{\varphi}  \rangle_{*} ,  \quad \forall \bs{\varphi}  \in \mathcal{U}_{*}, \quad \text{ in } \mathcal{C}_0^\infty (0,T)', 
$$
where $\bs{F}'$  denotes the distributional derivative with respect to $t$.  Since $Q^*_{\mathcal{V}_{*}^\perp}  \bs{F}(t) \in \text{Range}(Q^*_{\mathcal{V}_{*}^\perp})  = \text{Range}(\text{div}^{*}_{\Omega})$,  for a.a. $t \in (0,T)$, we conclude the following: 
$$
\langle  Q^*_{\mathcal{V}_{*}^\perp}  \bs{F}'(t), \bs{\varphi}  \rangle_{*} = J'_1 + J_2 + J_3 + J_4 \, ,
$$
where
\begin{equation}
\begin{split}
J_1 (t) &= \big((Q_{\mathcal{V}_{*}^\perp}  \bs{v} )(t),\bs{\varphi} \big)_{\LL^2} = \int_{\Omega} p_0 (t)(\nabla \cdot \bs{\varphi}) , \qquad\hspace{9.5mm} p_0 \in L^\infty(0,T;L^2(\Omega))\\
J_2 (t)&= \nu \, \langle Q^*_{\mathcal{V}_{*}^\perp}  {\mathcal G}( \bs{v}(t)) ,\bs{\varphi}  \rangle_{*}  = \int_{\Omega} p_1 (t)(\nabla \cdot \bs{\varphi}), \qquad \hspace{7mm}p_1 \in L^2(0,T;L^2(\Omega))\\
J_3 (t)&= \langle Q^*_{\mathcal{V}_{*}^\perp}  {\mathcal B}( \bs{v} (t), \bs{w}(t) ) , \bs{\varphi}  \rangle_{*}  = \int_{\Omega} p_2 (t)(\nabla \cdot \bs{\varphi}), \qquad p_2 \in L^{4/3}(0,T;L^2(\Omega))\\
J_4 (t)&= \langle Q^*_{\mathcal{V}_{*}^\perp}  {\mathcal F} (t) , \bs{\varphi}  \rangle_{*}  = \int_{\Omega} p_3 (t) (\nabla \cdot \bs{\varphi}) , \qquad \hspace{14.5mm}p_3 \in L^2(0,T;L^2(\Omega)).
\end{split}
\end{equation}
We obtain
\begin{equation}\label{odep2}
\begin{cases}
\begin{aligned}
\displaystyle \frac{ d }{dt}  \Big[ \big(\bs{v} (t),\bs{\varphi}\big)_{\LL^2}  &- \int_{\Omega} p_0(t) (\nabla \cdot \bs{\varphi}) \Big]   + \nu \big(\nabla \bs{v}(t), \nabla \bs{\varphi} \big)_\Omega   +\big( (\bs{v}(t)\! \cdot\! \nabla)\bs{v}(t), \bs{\varphi} \big)_\Omega\! \\
&  \displaystyle  + \dfrac{1}{2} \int_{\Gamma_{O}} [\bs{v}(t)  \cdot \bs{n}]^{-}(\bs{w}(t)\cdot \bs{\varphi})  - \int_{\Omega} \left(p_1(t) + p_2(t) +p_3(t)\right)  (\nabla \cdot \bs{\varphi}) 
\\&=\big\langle \bs{f}(t) \,,\,\bs{\varphi} \rangle \, + \langle\sigma_{*}(t)\bs{n}\,,\, \bs{\varphi} \rangle_{H^{-1/2}(\Gamma_O), \, H^{1/2}_{00}(\Gamma_O)} \, , 
\end{aligned} \\
\displaystyle	 \big(\bs{v} (0),\bs{\varphi}\big)_{\LL^2} =  \big(\bs{v}_0,\bs{\varphi}\big)_{\LL^2}\, ,  
\end{cases} \quad \forall \bs{\varphi} \in \mathcal{U}_{*} \, ,
\end{equation}
so that, 
\begin{equation}\label{press0}
p:=\partial_tp_0+p_1+p_2+p_3
\end{equation}
being $\partial_t p_0$ the time-derivative of $p_0$ in distributional sense, and
\begin{equation}\label{press1}
\begin{split}
&p_0\in L^{\infty}(0,T;L^2(\Omega))\qquad p_1\in L^{2}(0,T;L^2(\Omega))\\
&p_2\in L^{4/3}(0,T;L^2(\Omega))\qquad p_3\in L^2(0,T;L^2(\Omega)).
\end{split}
\end{equation}
We summarize our findings about the pressure in the following theorem.
\begin{theorem}\label{press}
If $\bs{v}$ is a weak solution to \eqref{ns}, then there exists an associated pressure $p$ (as a distribution in $Q_T$) as in \eqref{press0}, where $p_0, p_1, p_2, p_3$ have the regularity in \eqref{press1}.
\end{theorem}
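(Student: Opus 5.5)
The statement summarizes the construction just carried out, so the proof amounts to checking each ingredient of that construction rigorously, in order. First, for a weak solution $\bs{v}$ given by Theorem \ref{existence}, we will verify that the functional $\bs{F}(t)$ is a well-defined element of $\mathcal{U}'_*$ for a.a. $t$ and annihilates $\mathcal{V}_*$. For this we integrate the weak formulation \eqref{odep} (equivalently \eqref{nsweak}) in time over $(0,t)$, using the time regularity $\partial_t\bs{v}\in L^{4/3}(0,T;\VV')$. This makes $t\mapsto(\bs{v}(t),\bs{\varphi})_{\LL^2}$ absolutely continuous for each $\bs{\varphi}\in\mathcal{V}_*$, and the initial condition gives the value at $t=0$.

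Next we check the Bochner integrability of the three functionals in $\mathcal{U}'_*$. The bound ${\mathcal G}(\bs{v})\in L^2(0,T;\mathcal{U}'_*)$ is immediate from $\bs{v}\in L^2(0,T;\VV)$, and ${\mathcal F}\in L^2(0,T;\mathcal{U}'_*)$ follows from \eqref{fomega} and $\sigma_*\in H^1(0,T;H^{-1/2}(\Gamma_O))$. For ${\mathcal B}(\bs{v},\bs{w})$ we reuse the estimates that led to \eqref{odestime2}. The Gagliardo--Nirenberg bound gives $\|(\bs{v}\cdot\nabla)\bs{v}\|_{\mathcal{U}'_*}\le C\|\bs{v}\|^{1/2}_{L^2}\|\nabla\bs{v}\|^{3/2}_{L^2}$, which lies in $L^{4/3}(0,T)$ because $\bs{v}\in L^\infty(0,T;\HH)\cap L^2(0,T;\VV)$. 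The boundary term on $\Gamma_O$ is controlled by $\|\bs{v}\|_{L^4(\Gamma_O)}\|\bs{w}\|_{L^2(\Gamma_O)}\|\bs{\varphi}\|_{L^4(\Gamma_O)}$, which lies in $L^2(0,T)$. These checks justify all the integrals that define $\bs{F}$.

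Then we apply the Closed Range Theorem to $\mathrm{div}_\Omega:\mathcal{U}_*\to L^2(\Omega)$. This needs $\mathrm{div}_\Omega$ to have closed range, and here we must be careful. Because $\Gamma_O$ carries no Dirichlet condition, $\mathrm{div}_\Omega$ is in fact surjective onto $L^2(\Omega)$: the existence of $\bs{\varphi}\in\mathcal{U}_*$ with prescribed divergence and nonzero flux through $\Gamma_O$ is the result of \cite[Lemma 2.4]{fss} already used in the proof of Proposition \ref{benes}. With closed range we get \eqref{relU}, and the decomposition $\mathcal{U}_*=\mathcal{V}_*\oplus\mathcal{V}_*^\perp$ gives the bounded projection $Q_{\mathcal{V}_*^\perp}$, its adjoint, and the identity $\mathrm{Range}(Q^*_{\mathcal{V}_*^\perp})=\mathrm{Range}(\mathrm{div}^*_\Omega)$. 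The operator $\mathrm{div}^*_\Omega$ is injective with bounded inverse on its range (this is the Ne\v{c}as-type inequality $\|q\|_{L^2}\le C\|\mathrm{div}^*_\Omega q\|_{\mathcal{U}'_*}$). Consequently, each bounded linear map of Bochner spaces $L^r(0,T;\mathcal{U}'_*)\to L^r(0,T;L^2(\Omega))$ given by $(\mathrm{div}^*_\Omega)^{-1}Q^*_{\mathcal{V}_*^\perp}$ preserves the time integrability. Applying it to $\bs{v}$ (viewed in $\mathcal{U}'_*$ through the $\LL^2$ pairing), ${\mathcal G}$, ${\mathcal B}$ and ${\mathcal F}$ produces $p_0,p_1,p_2,p_3$ with exactly the regularity \eqref{press1}. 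Measurability in $t$ is inherited because these maps are linear and continuous.

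Finally, we differentiate the identity $\bs{F}=Q^*_{\mathcal{V}_*^\perp}\bs{F}$ in the distributional sense in $t$. Testing against $\phi\in\mathcal{C}^\infty_0(0,T)$ and $\bs{\varphi}\in\mathcal{U}_*$ yields \eqref{odep2}, and hence $p=\partial_t p_0+p_1+p_2+p_3$ as a distribution on $Q_T$ (take $\bs{\varphi}\in\mathcal{C}^\infty_0(\Omega)^3$). The main obstacle is the closed range and inverse-boundedness step on $\mathcal{U}_*$ with the mixed boundary conditions: every integrability statement in \eqref{press1} rests on it. If that surjectivity were not available, one would instead work modulo constants with $L^2_0(\Omega)$ and add the flux correction by hand.
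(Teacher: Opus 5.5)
Your proposal is correct and follows the paper's route. The paper applies the Closed Range Theorem to $\mathrm{div}^*_\Omega$ on $\mathcal{U}_*$, uses the projection $Q_{\mathcal{V}_*^\perp}$ and its adjoint (Neustupa's approach), and differentiates $\bs{F}=Q^*_{\mathcal{V}_*^\perp}\bs{F}$ in time in the distributional sense to get $p=\partial_t p_0+p_1+p_2+p_3$. Your additions---surjectivity of $\mathrm{div}_\Omega:\mathcal{U}_*\to L^2(\Omega)$, so the range is genuinely closed, and the resulting bounded inverse of $\mathrm{div}^*_\Omega$ on its range, which is what carries the Bochner integrability of $\bs{v},\mathcal{G},\mathcal{B},\mathcal{F}$ over to $p_0,\dots,p_3$---are justifications the paper uses without stating them.
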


\subsection{Proof of Theorem \ref{strong}}\label{proof_strong}
In order to prove the theorem we state some preliminary lemmas. They are used to estimate the convective term in \eqref{ns}.
\begin{lemma}\label{lemma:gagliardo}
	Let $\Omega\subset \R^3$ be an admissible domain. For all $\bs{w} \in D(\A)$ and $\bs{\varphi}\in \LL^2$ the following inequality holds:
	\begin{equation}\label{trilinear}
		\left|\left((\bs{w}\cdot \nabla)\bs{w},\bs{\varphi}\right)_{\Omega}\right|\le C
			\|\nabla\bs{w}\|_{L^{2}(\Omega)} \left\| \mathcal{A}\bs{w} \right\|_{\mathcal{L}^2}\|\bs{\varphi}\|_{\mathcal{L}^2} \, ,
	\end{equation}
	for some $C:=C(\Omega)>0$.
\end{lemma}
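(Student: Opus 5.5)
The inequality follows from H\"older's inequality, a Sobolev embedding of fractional order, and the $H^{3/2}$ estimate \eqref{estimatea} of Theorem \ref{stokesoperatortheo}. Only the $\Omega$-component of $\bs{\varphi}$ enters, so we first bound
$$
\left|\left((\bs{w}\cdot \nabla)\bs{w},\bs{\varphi}\right)_{\Omega}\right|\le \|\bs{w}\|_{L^{\infty}(\Omega)}\|\nabla \bs{w}\|_{L^2(\Omega)}\|\bs{\varphi}_\Omega\|_{L^2(\Omega)} .
$$
Note that $\|\bs{\varphi}_\Omega\|_{L^2(\Omega)}\le\|\bs{\varphi}\|_{\mathcal{L}^2}$. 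However, $H^{3/2}(\Omega)$ in three dimensions is exactly the critical space and does not embed into $L^\infty$. So the $L^\infty$ route must be replaced by a slightly different splitting.

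We therefore use the splitting $L^6\times L^3\times L^2$. We write
$$
\left|\left((\bs{w}\cdot \nabla)\bs{w},\bs{\varphi}\right)_{\Omega}\right|\le \|\bs{w}\|_{L^{6}(\Omega)}\|\nabla \bs{w}\|_{L^3(\Omega)}\|\bs{\varphi}_\Omega\|_{L^2(\Omega)} .
$$
For the first factor, since $\bs{w}\in\mathcal{V}_*$ vanishes on $\Gamma_I\cup\Gamma_W$, Sobolev and Poincar\'e give $\|\bs{w}\|_{L^6(\Omega)}\le C\|\nabla\bs{w}\|_{L^2(\Omega)}$. For the second factor, $\nabla\bs{w}\in H^{1/2}(\Omega)$ because $\bs{w}\in D(\A)\subset H^{3/2}(\Omega)^3$ by \eqref{inclusionsa}. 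On the bounded Lipschitz domain $\Omega$ the fractional Sobolev embedding $H^{1/2}(\Omega)\subset L^3(\Omega)$ holds (via an extension operator to $\mathbb{R}^3$). Hence $\|\nabla\bs{w}\|_{L^3(\Omega)}\le C\|\bs{w}\|_{H^{3/2}(\Omega)}$, and by \eqref{estimatea} this is at most $C\|\A\bs{w}\|_{\mathcal{L}^2}$. Combining the three bounds yields \eqref{trilinear}.

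The one delicate point is the embedding $\|\nabla\bs{w}\|_{L^3(\Omega)}\le C\|\bs{w}\|_{H^{3/2}(\Omega)}$ on the merely piecewise $\mathcal{C}^2$ (Lipschitz) admissible domain. I would justify it through the existence of a bounded extension operator $H^{3/2}(\Omega)\to H^{3/2}(\mathbb{R}^3)$ for Lipschitz domains (e.g.\ Stein's, or Rychkov's universal extension). Then one applies $H^{1/2}(\mathbb{R}^3)\subset L^3(\mathbb{R}^3)$ to the gradient of the extension and restricts back to $\Omega$. Everything else is routine.
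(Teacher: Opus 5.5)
Your proposal is correct and follows essentially the same route as the paper: the $L^6\times L^3\times L^2$ H\"older splitting, $\|\bs{w}\|_{L^6(\Omega)}\le C\|\nabla\bs{w}\|_{L^2(\Omega)}$ on $\VV_*$, $\|\nabla\bs{w}\|_{L^3(\Omega)}\le C\|\bs{w}\|_{H^{3/2}(\Omega)}$, and then \eqref{estimatea}. The paper simply asserts the embedding $H^{3/2}(\Omega)\hookrightarrow W^{1,3}(\Omega)$; your justification via an extension operator on the Lipschitz domain and $H^{1/2}(\mathbb{R}^3)\subset L^3(\mathbb{R}^3)$ supplies that detail.
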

\noindent
\begin{proof}
	By the H{\"o}lder inequality we have
	\begin{equation}\label{bd}
		|\left((\bs{w}\cdot \nabla)\bs{w},\bs{\varphi}\right)_{\Omega}|\leq \|(\bs{w}\cdot \nabla)\bs{w}\|_{L^2(\Omega)}\|\bs{\varphi}\|_{L^2(\Omega)}\leq \|\bs{w}\|_{L^\ell(\Omega)}\|\nabla \bs{w}\|_{L^q(\Omega)}\|\bs{\varphi}\|_{L^2(\Omega)},
	\end{equation}
	for some $\ell,q$ that satisfies 
	\begin{equation}
		\frac{1}{\ell}+\frac{1}{q}=\frac{1}{2}.
		\label{rellq}
	\end{equation}
	
We have the embedding $H^{3/2}(\Omega) \hookrightarrow W^{1,3}(\Omega) \hookrightarrow L^{p}(\Omega)$, for all $p \geq 1$ and \eqref{bd}-\eqref{rellq} hold
	for $\ell \geq 6$ and $2 < q \leq 3$.
	If we use the classical Gagliardo-Nirenberg inequality to bound the $L^\ell$-norm of $ \bs{w}$ in \eqref{bd}, we obtain
	$$
	\|\bs{w}\|_{L^\ell(\Omega)}\leq C \|\bs{w}\|^{\frac 3\ell - \frac 12}_{L^2(\Omega)} \|\nabla\bs{w}\|^{\frac 32 - \frac 3\ell}_{L^2(\Omega)} ,
	\quad \forall \ell \in ( 2,6] \, ,
	$$
	which, taking into account the restriction on the values of $\ell$, will reduce to $\ell=6$ and
	\begin{equation}
		\|\bs{w}\|_{L^6(\Omega)}\leq C  \|\nabla\bs{w}\|_{L^2(\Omega)}\qquad \forall \bs{w}\in  \VV_*.
		\label{estwl3}
	\end{equation}
	Therefore, from \eqref{rellq} we have $q=3$ and
	\begin{equation}
		\|\nabla \bs{w}\|_{L^3(\Omega)}\leq C \|\bs{w}\|_{H^{3/2}(\Omega)} \qquad \forall \bs{w}\in  H^{3/2}(\Omega)^3\cap {\mathcal V}_*.
		\label{estgwq3}
	\end{equation}
Hence we estimate the nonlinear term as
	\begin{equation}\label{stima3d}
	\begin{split}
		& |\left((\bs{w}\cdot \nabla)\bs{w},\bs{\varphi}\right)_{\Omega}|
		\leq C   \|\bs{w}\|_{L^6(\Omega)} \|\nabla\bs{w}\|_{L^3(\Omega)}  \|\bs{\varphi}\|_{L^2(\Omega)} \leq C\|\nabla\bs{w}\|_{L^2(\Omega)} \|\bs{w}\|_{H^{3/2}(\Omega)}\|\bs{\varphi}\|_{\LL^2},
	\end{split}
\end{equation}
for all $\bs{w}\in  H^{3/2}(\Omega)^3\cap {\mathcal V}_*$ and $\bs{\varphi}\in \LL^2$.
Using Theorem \ref{stokesoperatortheo}, we obtain the thesis.
\end{proof}
\medskip

The proof of Theorem \ref{strong} is also based on the Galerkin method, see for instance \cite{Gal2000a,RRS2016,Tem1977b} where the problem under classical boundary conditions is studied. 

Now we consider $\bs{W}_{\! \! *}$ given by Lemma \ref{reference} with $k=2$.
 Since $\bs{v}_0\in \VV$ and $\bs{W}_{\! \! *}\in H^1(0,T;H^2(\Omega)^3\cap\VV)$, we have $\bs{w}_0=\bs{v}_0-\bs{W}_{\! \! *}(\cdot,0)\in \VV_*$. We consider the equality \eqref{ode0} with the test function $\bs{\varphi}=\lambda_k\bs{u}_k$, getting for $k=1,\dots,n$
	\begin{equation}\label{weak2}
\begin{aligned}
&\big(\partial_t\bs{w}^n(t),\lambda_k\bs{u}_k\big)_{\LL^2}+\nu \big(\nabla \bs{w}^n(t), \nabla \lambda_k\bs{u}_k\big)_\Omega +\big( (\bs{w}^n(t) \cdot \nabla)\bs{w}^n(t), \lambda_k\bs{u}_k\big)_\Omega \\&+ \big((\bs{w}^n(t) \cdot \nabla)\bs{W}_{\! \! *}(t),  \lambda_k\bs{u}_k\big)_\Omega + \big((\bs{W}_{\! \! *}(t) \cdot \nabla)\bs{w}^n(t), \lambda_k\bs{u}_k\big)_\Omega=\\&-\dfrac{1}{2} \int_{\Gamma_{O}} [(\bs{w}^n(t) + \bs{W}_{\! \! *}(t)) \cdot \bs{n}]^{-}(\bs{w}^n(t) \cdot \lambda_k\bs{u}_k) 
-\big(\partial_t\bs{W}_{\! \! *}(t)\,,\,\lambda_k\bs{u}_k\big)_{\LL^2}\\&+ \big(\nu \Delta\bs{W}_{\! \! *}(t) -(\bs{W}_{\! \! *}(t)\cdot \nabla)\bs{W}_{\! \! *}(t)+ \bs{f}(t) \,,\,\lambda_k\bs{u}_k\big)_\Omega.
\end{aligned}
\end{equation}
We observe that, integrating by parts,
	\begin{equation}\label{wt}
	\begin{split}
	\big(\partial_t\bs{w}^n,\lambda_k\bs{u}_k\big)_{\LL^2}&=\big(\partial_t\bs{w}^n,-\Delta \bs{u}_k+\nabla \pi_k\big)_\Omega+\left(\partial_t\bs{w}^n,\frac{\partial\bs{u}_k}{\partial\bs{n}}- \pi_k\bs{n}\right)_{\Gamma_O}=\big(\nabla (\partial_t\bs{w}^n),\nabla \bs{u}_k\big)_\Omega.
	\end{split}
\end{equation}
We introduce
	\begin{equation}\label{approxp}
q^n(\bs{x},t):=\sum_{k=1}^n c^n_{k}(t)\,\pi_k(\bs{x}) \qquad \forall (\bs{x},t) \in Q_{T} \, ,
\end{equation}
getting
\begin{equation}\label{q}
	\int_\Omega \nabla q^n(t)\lambda_k\bs{u}_k=\int_{\Gamma_O} q^n(t)\lambda_k(\bs{u}_k\cdot\bs{n})-\int_\Omega\lambda_kq^n(t)(\nabla\cdot \bs{u}_k)=	\int_{\Gamma_O} q^n(t)\lambda_k(\bs{u}_k\cdot\bs{n})
\end{equation}
and 
	\begin{equation}
\begin{split}
\big(\nabla \bs{w}^n(t), \nabla \lambda_k\bs{u}_k\big)_\Omega&=\int_{\Gamma_O}\lambda_k\bs{u}_k\cdot	\frac{\partial\bs{w}^n(t)}{\partial\bs{n}}- \big(\Delta \bs{w}^n(t), \lambda_k\bs{u}_k\big)_\Omega\\
&=\int_{\Gamma_O}\lambda_k\bs{u}_k\cdot	\Big(\frac{\partial\bs{w}^n(t)}{\partial\bs{n}}-q^n(t)\bs{n}\Big)- \big(\Delta \bs{w}^n(t)-\nabla q^n(t), \lambda_k\bs{u}_k\big)_\Omega\\
&=\int_{\Gamma_O}\left(\frac{\partial\bs{u}_k}{\partial\bs{n}}- \pi_k\bs{n}\right)\!\cdot\!	\Big(\frac{\partial\bs{w}^n(t)}{\partial\bs{n}}-q^n(t)\bs{n}\Big)+ \!\big(\!-\Delta \bs{w}^n(t)+\nabla q^n(t),-\Delta \bs{u}_k+\nabla \pi_k\big)_\Omega.
\end{split}
\end{equation}
Hence, multiplying \eqref{weak2} by $c^n_k(t)$,  summing for $k$ from 1 to $n$ and recalling \eqref{domainareg} and \eqref{proj}, we obtain
	\begin{equation}\label{eq005}
	\begin{split}
	\dfrac{1}{2}	&\frac{d}{dt}\|\nabla\bs{w}^n(t)\|^2_{L^2(\Omega)}+\nu\|\mathcal{A}\bs{w}^n(t)\|^2_{\LL^2}= -\big((\bs{w}^n(t)\cdot\nabla) \bs{w}^n(t), \A \bs{w}^n(t)\big)_\Omega \\&- \big((\bs{w}^n(t) \cdot \nabla)\bs{W}_{\! \! *}(t),  \A \bs{w}^n(t)\big)_\Omega - \big((\bs{W}_{\! \! *}(t) \cdot \nabla)\bs{w}^n(t), \A \bs{w}^n(t) \big)_\Omega\\&
	-\dfrac{1}{2} \int_{\Gamma_{O}} [(\bs{w}^n(t) + \bs{W}_{\! \! *}(t)) \cdot \bs{n}]^{-}\bigg[\bs{w}^n(t) \cdot \bigg(\frac{\partial \bs{w}^n(t)}{\partial\bs{n}}-q^n(t)\bs{n}\bigg)\bigg]-\big(\partial_t\bs{W}_{\! \! *}(t)\,,\,\A \bs{w}^n(t)\big)_{\LL^2}
	\\
	&+\big(\nu \Delta\bs{W}_{\! \! *}(t)-(\bs{W}_{\! \! *}(t)\cdot \nabla)\bs{W}_{\! \! *}(t)+\bs{f}(t)\,,\,\A \bs{w}^n(t)\big)_\Omega,
	\end{split}
	\end{equation}
where $\A \bs{u}$ is the generalized Stokes operator introduced in Subsection \ref{substokes}.

Now we need bounds for the terms on the right-hand side of \eqref{eq005}.
Using the (spatial) regularity of $\bs{W}_{\! \! *}\in H^2(\Omega)^3\subset \mathcal{C}(\overline\Omega)^3$, see Lemma \ref{lemma0}, H{\"o}lder, Sobolev and Young inequalities ($\epsilon>0$), we obtain
\begin{equation}\label{eq0020}
	\begin{split}
		\big|\big((\bs{w}^n \cdot \nabla)\bs{W}_{\! \! *}\,,\,  \A \bs{w}^n\big)_\Omega\big|&\leq  \|(\bs{w}^n \cdot \nabla)\bs{W}_{\! \! *}\|_{L^2(\Omega)}\|\A \bs{w}^n\|_{L^2(\Omega)}\\&\leq \|\bs{w}^n\|_{L^4(\Omega)}\|\nabla\bs{W}_{\! \! *}\|_{L^4(\Omega)}\|\A \bs{w}^n\|_{L^2(\Omega)}\\&\hspace{0mm}\leq \frac{\epsilon}{2}\|\A \bs{w}^n\|^2_{\mathcal{L}^2}+ \dfrac{C \|\nabla\bs{W}_{\! \! *}\|_{L^4(\Omega)}^2}{\epsilon}\|\nabla\bs{w}^n\|_{L^2(\Omega)}^2 ;\\
		|\big((\bs{W}_{\! \! *} \cdot \nabla)\bs{w}^n\,,\, \A \bs{w}^n \big)_\Omega|&\leq \|\bs{W}_{\! \! *}\|_{L^\infty(\Omega)}\|\nabla \bs{w}^n\|_{L^2(\Omega)}\|\A \bs{w}^n\|_{L^2(\Omega)}\\&\leq \frac{\epsilon}{2}\|\A \bs{w}^n\|^2_{\mathcal{L}^2}+ \dfrac{\|\bs{W}_{\! \! *}\|_{L^\infty(\Omega)}^2}{2\epsilon}\|\nabla\bs{w}^n\|_{L^2(\Omega)}^2 ;\\
		|\big(\partial_t\bs{W}_{\! \! *}\,,\,\A \bs{w}^n\big)_{\LL^2}|&\leq\frac{\epsilon}{2}\|\A \bs{w}^n\|^2_{\mathcal{L}^2}+\frac{1}{2\epsilon}\|\partial_t\bs{W}_{\! \! *}\|^2_{\LL^2};\\
		\big|\big(\nu \Delta\bs{W}_{\! \! *}+\bs{f}\,,\,\A \bs{w}^n\big)_\Omega\big|&\leq \big(\nu\|\Delta\bs{W}_{\! \! *}\|_{L^2(\Omega)}+\|\bs{f}\|_{L^2(\Omega)}\big)\|\A\bs{w}^n\|_{L^2(\Omega)}\\&\leq \frac{\epsilon}{2}\|\A \bs{w}^n\|^2_{\mathcal{L}^2}+\frac{1}{\epsilon}\big(\nu^2\|\Delta\bs{W}_{\! \! *}\|^2_{L^2(\Omega)}+\|\bs{f}\|^2_{L^2(\Omega)}\big);\\
		\big|\big((\bs{W}_{\! \! *}\cdot \nabla)\bs{W}_{\! \! *}\,,\,\A \bs{w}^n\big)_\Omega\big|&\leq \|(\bs{W}_{\! \! *}\cdot \nabla)\bs{W}_{\! \! *}\|_{L^2(\Omega)}\|\A \bs{w}^n\|_{L^2(\Omega)}\\& \leq \|\bs{W}_{\! \! *}\|_{L^\infty(\Omega)}\|\nabla\bs{W}_{\! \! *}\|_{L^2(\Omega)}\|\A \bs{w}^n\|_{L^2(\Omega)}\\&\leq \frac{\epsilon}{2}\|\A\bs{w}^n\|_{\mathcal{L}^2}^2+\frac{\|\bs{W}_{\! \! *}\|_{L^\infty(\Omega)}^2\|\nabla\bs{W}_{\! \! *}\|_{L^2(\Omega)}^2}{2\epsilon}.
	\end{split}
\end{equation}
Similarly, the boundary nonlinear term
\begin{equation}\label{bd0}
	\begin{split}
	\bigg| \int_{\Gamma_{O}} [(\bs{w}^n+ \bs{W}_{\! \! *}) \cdot \bs{n}]^{-}\bigg[\bs{w}^n \cdot \bigg(\frac{\partial \bs{w}^n}{\partial\bs{n}}-q^n\bs{n}\bigg)\bigg]\bigg|&\leq\|\bs{w}^n+ \bs{W}_{\! \! *}\|_{L^4(\Gamma_O)}\|\bs{w}^n\|_{L^4(\Gamma_O)}\|\A \bs{w}^n\|_{L^2(\Gamma_O)}\\
	&\hspace{-40mm}\leq C\|\nabla\bs{w}^n+ \nabla\bs{W}_{\! \! *}\|_{L^2(\Omega)}\|\nabla\bs{w}^n\|_{L^2(\Omega)}\|\A \bs{w}^n\|_{L^2(\Gamma_O)}\\
	&\hspace{-40mm}\leq \frac{\epsilon}{2}\|\A \bs{w}^n\|^2_{\LL^2}+\frac{C}{\epsilon}\bigg(\|\nabla\bs{w}^n\|_{L^2(\Omega)}^4+\|\nabla\bs{w}^n\|_{L^2(\Omega)}^2\|\nabla\bs{W}^*\|_{L^2(\Omega)}^2\bigg).
	\end{split}
\end{equation}
Here and in what follows, $C:=C(\Omega)> 0$ will always denote a generic constant, that may change from line to line.
To bound $\big((\bs{w}^n(t)\cdot\nabla) \bs{w}^n(t), \A \bs{w}^n(t)\big)_\Omega$ in \eqref{eq005} we apply Lemma \ref{lemma:gagliardo} with $\bs{\varphi}=\A \bs{w}^n$. Therefore, due to the previous estimates, the equation \eqref{eq005} becomes
	\begin{equation}\label{eq0063d}
\begin{split}
\dfrac{1}{2}&\frac{d}{dt}\|\nabla\bs{w}^n(t)\|^2_{L^2(\Omega)} + \nu\|\A \bs{w}^n(t)\|^2_{\mathcal{L}^2} \leq 
\\& \left(\frac{11}{4}\epsilon+C\|\nabla\bs{w}^n(t)\|_{L^{2}(\Omega)}\right)\|\A \bs{w}^n(t)\|_{\LL^2}^{2} + \dfrac{C}{\epsilon}\bigg[\|\nabla \bs{w}^n(t)\|_{L^2(\Omega)}^4+\\& \left(\|\nabla\bs{W}_{\! \! *}(t)\|_{L^4(\Omega)}^2+\|\nabla\bs{W}_{\! \! *}(t)\|_{L^2(\Omega)}^2+\|\bs{W}_{\! \! *}(t)\|_{L^\infty(\Omega)}^2\right)\|\nabla\bs{w}^n(t)\|_{L^2(\Omega)}^2+\\&\|\partial_t\bs{W}_{\! \! *}(t)\|_{\mathcal{L}^2}^2+\|\bs{W}_{\! \! *}(t)\|_{L^\infty(\Omega)}^2\|\nabla\bs{W}_{\! \! *}(t)\|_{L^2(\Omega)}^2+\nu^2\|\Delta\bs{W}_{\! \! *}(t)\|^2_{L^2(\Omega)}+\|\bs{f} (t)\|_{L^2(\Omega)}^{2}\bigg].
\end{split}
\end{equation}
Applying the Young inequality, we obtain
\begin{equation}\label{eq63d}
 \frac{d}{dt}\|\nabla\bs{w}^n(t)\|^2_{L^2(\Omega)}+\left(2\nu-\frac{11}{2}\epsilon-C\|\nabla\bs{w}^n(t)\|_{L^{2}(\Omega)}\right)\|\A \bs{w}^n(t)\|^2_{{\mathcal L}^2} \leq \frac{C}{\epsilon}\Big(\|\nabla \bs{w}^n(t)\|_{L^2(\Omega)}^4+W_\nu(t)+F(t)\Big),
\end{equation}
where
$$
W_\nu(t) :=\|\bs{W}_{\! \! *}(t)\|_{H^2(\Omega)}^4+\nu^2\|\bs{W}_{\! \! *}(t)\|_{H^2(\Omega)}^2 +\|\partial_t\bs{W}_{\! \! *}(t)\|_{{\mathcal L}^2}^2, \qquad F(t) := \|\bs{f} (t)\|_{L^2(\Omega)}^{2}.
$$
We take $\epsilon=\frac{\nu}{11}$ in the previous inequality and we study at first
\begin{equation}\label{eq4mod}
	\begin{split}
		& \frac{d}{dt}\|\nabla\bs{w}^n(t)\|^2_{L^2(\Omega)}+\left(\frac 32 \nu - C\|\nabla\bs{w}^n(t)\|_{L^{2}(\Omega)}\right) \|\A \bs{w}^n(t)\|^2_{{\mathcal L}^2}\\&\quad \leq \frac{C}{\nu}\left(\|\nabla \bs{w}^n(t)\|_{L^2(\Omega)}^4 
		+ W_\nu(t) + F(t) \right).
	\end{split}
\end{equation}
Moreover, we assume
\begin{equation}
\|\nabla\bs{w}^n(0)\|_{L^{2}(\Omega)}\leq	\|\nabla\bs{w}_0\|_{L^{2}(\Omega)} < \dfrac{\nu}{C}, \quad \forall n \in {\mathbb N};
	\label{ass1}
\end{equation}
hence, by continuity of $[0,T] \ni t \mapsto \|\nabla\bs{w}^n(t)\|_{L^{2}(\Omega)} $, for each $n$ there exists $T_n >0$ such that 
\begin{equation}
	\|\nabla \bs{w}^n(t)\|_{L^2(\Omega)} \leq \frac{\nu}{C}, \quad \forall t \in  [0,T_n].
	\label{tn1}
\end{equation} 
The next step it is to find conditions on the data so that $T_n = T$ in \eqref{tn1}. A consequence of \eqref{tn1} is 
$$
\left(\frac 32 \nu - C\|\nabla\bs{w}^n(t)\|_{L^{2}(\Omega)}\right) \|\A \bs{w}^n(t)\|^2_{{\mathcal L}^2} \geq \frac \nu 2 \|\A \bs{w}^n(t)\|^2_{{\mathcal L}^2}, \quad \forall t \in  [0,T_n],
$$
and thus,  we can drop the term $\left(\frac 32 \nu - C\|\nabla\bs{w}^n(t)\|_{L^{2}(\Omega)}\right) \|\A \bs{w}^n(t)\|^2_{{\mathcal L}^2}$ in \eqref{eq4mod}, studying the inequality
\begin{equation}
\frac{d}{dt} \|\nabla\bs{w}^n(t)\|^2_{L^2(\Omega)} 
\leq \frac{C}{\nu} \Big(\|\nabla \bs{w}^n(t)\|_{L^2(\Omega)}^4+W_\nu(t) + F(t)\Big), \quad \forall t \in [0,T_n].
	\label{ineq1}
\end{equation} 
From \eqref{ineq1}, the previous estimates and Gronwall inequality, we get
\begin{equation}\label{0.13}
\|\nabla\bs{w}^n(t)\|^2_{L^2(\Omega)} \leq g_n(t), \quad \forall t\in [0,T_n],
\end{equation}
where $g_n: [0,T] \to [0,\infty]$ is given by (the increasing function)
\begin{equation}\label{eqteste}
	\begin{aligned}
		&g_n(t):= \exp\left( \frac{C}{\nu} \int_0^t \|\nabla\bs{w}^n(s)\|^2_{L^2(\Omega)} ds \right)\bigg[ \|\nabla\bs{w}^n(0)\|^2_{L^2(\Omega)} 
		\\ & +  \frac{C}{\nu} \int_0^t \left[W_\nu(s) + F(s)\right] \exp\left( - \frac{C}{\nu} \int_0^s \|\nabla\bs{w}^n(\tau)\|^2_{L^2(\Omega)} d\tau\right)ds\bigg]; 
	\end{aligned}
\end{equation}

If $\|\nabla\bs{w}^n(s)\|_{L^2(\Omega)}$ is identically zero on $[0,T]$ there is nothing to prove. Otherwise, recalling the uniform estimate \eqref{eq0044} on weak solutions and \eqref{ass1} we find
\begin{equation}
	g_n(t) < \exp\!\left(\frac{C}{\nu^2} M\right)
	\left[
	\|\nabla \bs{w}_0\|_{L^2(\Omega)}^2 + \frac{C}{\nu}\int_0^T (W_\nu(s)+F(s))\,ds
	\right]
	=: G,
	\quad \forall t\in[0,T], \ \forall n\in\mathbb{N}.
	\label{0.14}
\end{equation}
In view of \eqref{tn1}, \eqref{0.13} and the uniform bound \eqref{0.14} on the sequence $\{g_n\}$ in $[0,T]$, we require the data $\nu, \bs{w}_0, \bs{W}^*, \bs{f}$ so that $G \le \frac{\nu^2}{C^2}$.  
Note that the restriction
\begin{equation}
	\exp\!\left(\frac{C M(\Omega,\nu,T,\bs{w}_0,\bs{W}^*,\bs{f})}{\nu^2}\right)
	\left[
	\|\nabla \bs{w}_0\|_{L^2(\Omega)}^2 + \frac{C}{\nu}\int_0^T (W_\nu(s)+F(s))\,ds
	\right]
	\le \frac{\nu^2}{C^2},
	\label{0.15}
\end{equation}
equivalently to \eqref{datopiccolo3d},
implies that our initial assumption \eqref{ass1} holds true. 

Our aim now is to prove that $T_n=T$  in \eqref{tn1}. For contradiction, suppose there exists $n_0\in\mathbb{N}$ such that $T_{n_0}<T$ and
\[
\|\nabla \bs{w}^{n_0}(T_{n_0})\|_{L^2(\Omega)}^2
= \frac{\nu^2}{C^2}.
\]
Since $g_{n_0}(t)$ is increasing and satisfies \eqref{0.13}-\eqref{0.14}, we have
\[
\|\nabla \bs{w}^{n_0}(T_{n_0})\|_{L^2(\Omega)}^2\leq g_{n_0}(T_{n_0}) < G \le \frac{\nu^2}{C^2},
\]
giving the contradiction
\[
0\leq g_{n_0}(T_{n_0})
- \|\nabla \bs{w}^{n_0}(T_{n_0})\|_{L^2(\Omega)}^2
<\frac{\nu^2}{C^2}
- \|\nabla \bs{w}^{n_0}(T_{n_0})\|_{L^2(\Omega)}^2=0.
\]
Therefore, if \eqref{0.15} holds true we can take $T_n=T$, for all $n\in\mathbb{N}$, and we infer the uniform bound
\begin{equation}\label{stima2}
	\|\nabla \bs{w}^n(t)\|_{L^2(\Omega)}^2
	\le G \le \frac{\nu^2}{C^2},
	\qquad \forall t\in[0,T].
\end{equation}
Assuming \eqref{0.15}, from \eqref{stima2} the inequality \eqref{eq4mod} becomes
\begin{equation}
	\begin{split}
	\dfrac{d}{dt}\|\nabla \bs{w}^n(t)\|_{L^2(\Omega)}^2
+ \dfrac{\nu}{2}\|\A \bs{w}^n(t)\|_{\LL^2}^2
&\le
\dfrac{C}{\nu}\Big(
\|\nabla \bs{w}^n(t)\|_{L^2(\Omega)}^4
+W_\nu(t)+F(t)
\Big)\\&\le
\dfrac{C}{\nu}\left(
G^2
+W_\nu(t)+F(t)
\right),
\qquad \forall t\in[0,T].
	\end{split}
\end{equation}
We integrate  from 0 e $t\in[0,T]$ and we obtain a uniform bound on the norm of $\A \bs{w}^n$
\begin{equation}\label{stima3}
	\int_0^t\|\A \bs{w}^n(\tau)\|_{\LL^2}^2d\tau
	\le \dfrac{C}{\nu^2}\left(\nu\|\nabla\bs{w}_0\|^2_{L^2(\Omega)}+
	G^2t
	+\int_0^t[W_\nu(\tau)+F(\tau)]d\tau\right):=H(t),
	\quad \forall t\in[0,T].
\end{equation}
 In order to estimate the norm of $\partial_t\bs{w}^n$ we multiply  \eqref{ode}$_1$ by $\frac{d}{dt}c^n_k(t)$ and we sum over $k$, getting
 \begin{equation}\label{eq2_2d}
 	\begin{aligned}
 		&\|\partial_t\bs{w}^n(t)\|^2_{\LL^2}=-\nu \big(\nabla \bs{w}^n(t), \nabla(\partial_t\bs{w}^n)(t)\big)_\Omega\! -\!\big( (\bs{w}^n(t) \cdot \nabla)\bs{w}^n(t), \partial_t\bs{w}^n(t)\big)_\Omega\! -\! \big((\bs{w}^n(t) \cdot \nabla)\bs{W}_{\! \! *}(t),  \partial_t\bs{w}^n(t)\big)_\Omega \\&- \big((\bs{W}_{\! \! *}(t) \cdot \nabla)\bs{w}^n(t), \partial_t\bs{w}^n(t)\big)_\Omega 
 		-\dfrac{1}{2} \int_{\Gamma_{O}} [(\bs{w}^n(t)+\bs{W}_{\! \! *}(t)) \cdot \bs{n}]^{-}(\bs{w}^n(t)\cdot \partial_t\bs{w}^n(t))\\&-\big(\partial_t\bs{W}_{\! \! *}(t)\,,\,\partial_t\bs{w}^n(t)\big)_{\LL^2}+\big(\nu \Delta\bs{W}_{\! \! *}(t)-(\bs{W}_{\! \! *}(t)\cdot \nabla)\bs{W}_{\! \! *}(t)+\bs{f}(t)\,,\,\partial_t\bs{w}^n(t)\big)_\Omega.
 	\end{aligned}
 \end{equation}
 Applying Lemma \ref{lemma:gagliardo} with $\bs{\varphi}=\partial_t\bs{w}^n$ we find
\begin{equation}\label{stima2d3}
	\begin{split}
		|\big((\bs{w}^n\cdot\nabla) \bs{w}^n, \partial_t\bs{w}^n\big)_\Omega|&\leq C\|\nabla\bs{w}^n\|_{L^{2}(\Omega)}\|\A \bs{w}^n\|_{\LL^2}\|\partial_t\bs{w}^n\|_{\mathcal{L}^2}\leq C\nu\|\A \bs{w}^n\|_{\LL^2}\|\partial_t\bs{w}^n\|_{\mathcal{L}^2},
	\end{split}
\end{equation}
 while, recalling the definition of $q^n$ in \eqref{approxp}, with an integration by parts similar to \eqref{wt}, we observe 	
 \begin{equation}\label{stima32d}
 	\begin{split}
 		|\big(\nabla \bs{w}^n, \nabla(\partial_t\bs{w}^n)\big)_\Omega|&=\bigg|\int_\Omega (-\Delta\bs{w}^n+\nabla q^n)\cdot \partial_t\bs{w}^n+\int_{\Gamma_O}\left(\frac{\partial\bs{w}^n}{\partial\bs{n}}-q^n\bs{n}\right)\cdot \partial_t\bs{w}^n\bigg|\\
 		&\leq\|\partial_t\bs{w}^n\|_{\LL^2}\|\A\bs{w}^n\|_{\LL^2}.
 	\end{split}
 \end{equation}
 Therefore, through H\"{o}lder, Young inequalities ($\epsilon>0$) and the bounds \eqref{eq0020}, \eqref{bd0}, \eqref{stima2d3} and \eqref{stima32d}, the inequality \eqref{eq2_2d} becomes
 \begin{equation}\label{eq3_2d}
 	\begin{split}
 		\|\partial_t\bs{w}^n\|_{\LL^2}\leq&\nu \|\A \bs{w}^n\|_{\LL^2}+C\nu\|\A \bs{w}^n\|_{\LL^2} +\|(\bs{w}^n \cdot \nabla)\bs{W}_{\! \! *}\|_{L^2(\Omega)}\\& + \|(\bs{W}_{\! \! *} \cdot \nabla)\bs{w}^n\|_{L^2(\Omega)}
 		+C\|\nabla\bs{w}^n+ \nabla\bs{W}_{\! \! *}\|_{L^2(\Omega)}\|\nabla\bs{w}^n\|_{L^2(\Omega)}\\&+ \|\partial_t\bs{W}_{*}\|_{\mathcal{L}^2}+\|(\bs{W}_{\! \! *}\cdot \nabla)\bs{W}_{\! \! *}\|_{L^2(\Omega)} +\nu\|\Delta\bs{W}_{\! \! *}\|_{L^2(\Omega)}+\| \bs{f}\|_{L^2(\Omega)} \\[2mm]
 		\leq&C\big( \nu\|\A \bs{w}^n\|_{\LL^2}+\|\nabla \bs{w}^n\|^{2}_{L^2(\Omega)}+\|\bs{W}_{\! \! *}\|^2_{H^2(\Omega)}+\nu\|\bs{W}_{\! \! *}\|_{H^2(\Omega)}+\|\partial_t\bs{W}_{\! \! *}\|_{\mathcal{L}^2}+\|\bs{f}\|_{L^2(\Omega)}\big).
 	\end{split}
 \end{equation}
After squaring, integrating from $0$ to $t\in [0,T]$, we obtain
\begin{equation}\label{eq4_3d}
	\begin{aligned}
		\int_0^t\|\partial_t\bs{w}^n(\tau)\|^2_{\mathcal{L}^2}d\tau\leq& C \int_0^t\big(\nu^2\|\A \bs{w}^n(\tau)\|^2_{\mathcal{L}^2}\!+\!\|\nabla \bs{w}^n(\tau)\|^{4}_{L^2(\Omega)}\!+\\&\hspace{10mm}\|\bs{W}_{\! \! *}(\tau)\|^4_{H^2(\Omega)}+\nu^2\|\bs{W}_{\! \! *}(\tau)\|^2_{H^2(\Omega)}+\|\partial_t\bs{W}_{\! \! *}(\tau)\|^2_{\mathcal{L}^2}\!+\!\|\bs{f}(\tau)\|^2_{L^2(\Omega)}\big)d\tau\\
		\leq& C\bigg[\nu^2H(t)+G^2 t+\\&\hspace{3mm}\int_0^t(\|\bs{W}_{\! \! *}(\tau)\|^4_{H^2(\Omega)}+\nu^2\|\bs{W}_{\! \! *}(\tau)\|^2_{H^2(\Omega)}+\|\partial_t\bs{W}_{\! \! *}(\tau)\|^2_{\mathcal{L}^2}+\|\bs{f}(\tau)\|^2_{L^2(\Omega)})d\tau\bigg], 
	\end{aligned}
\end{equation}
\normalsize for all $t\in[0,T]$ in which we also used the bounds  \eqref{stima2} and \eqref{stima3}.
From~\eqref{stima2}-\eqref{stima3}-\eqref{eq4_3d} we infer respectively the boundedness of $\bs{w}^n$ in $L^\infty(0,T;\VV_*)$ and of $\partial_t\bs{w}^n$, $\A \bs{w}^n$ in $L^2(0,T;\LL^2)$. Hence, up to a subsequence, we infer weak-star and weak convergence in the respective spaces to $\bs{w}$ and $\partial_t\bs{w}$, $\A \bs{w}$. 
Moreover, from \eqref{estimatea} and \eqref{stima3} we get
\begin{equation}\label{stima3'}
\int_0^t\|\bs{w}^n(\tau)\|^2_{H^{3/2}(\Omega)}d\tau\leq C\int_0^t\|\A \bs{w}^n(\tau)\|_{\LL^2}^2d\tau
\le C H(t),
\quad \forall t\in[0,T],
\end{equation}
so that, up to a subsequence, we have weak convergence of $\bs{w}^n$ in $L^2(0,T;H^{3/2}(\Omega)^{3})$  to $\bs{w}$.

 Since
$$
\big(\nabla \bs{w}, \nabla \bs{\varphi}\big)_\Omega =  \int_{\Gamma_O}\frac{\partial \bs{w}}{\partial \bs{n}} \cdot \bs{\varphi}  - 
\int_{\Omega} \Delta \bs{w} \cdot \bs{\varphi},
$$
 from the Definition \ref{weaksolution} of weak solution, we obtain
\begin{equation}\label{weakp}
	\big(\partial_t\bs{v},\bs{\varphi}\big)_{\LL^2} + \int_{\Omega} \left[ (\bs{v} \cdot \nabla)\bs{v}  - \nu  \Delta \bs{v} - \bs{f} \right]  \cdot \bs{\varphi} + \left( \nu \frac{\partial \bs{v}}{\partial \bs{n}} + \dfrac{1}{2} [\bs{v}  \cdot \bs{n}]^{-}\bs{w} - \sigma_{*} \bs{n} , \bs{\varphi} \right)_{\Gamma_{O}} = 0 , \text{ a.e. in } (0,T),
\end{equation}
for all $\bs{\varphi} \in \mathcal{V}_{*}$. In particular, we have
\begin{equation}\label{weakprest}
	\int_{\Omega} \left[ \partial_t\bs{v} + (\bs{v} \cdot \nabla)\bs{v}  - \nu  \Delta \bs{v} - \bs{f} \right]  \cdot \bs{\varphi}  = 0 
	, \text{ a.e. in } (0,T),
\end{equation}
for all $
\bs{\varphi} \in  \mathcal{C}^\infty_{0,\sigma}(\Omega)^3$, and by density, for all $\bs{\varphi} \in \mathcal{H}_{3} $, the closure of $ \mathcal{C}^\infty_{0,\sigma}(\Omega)^3$ in $L^3(\Omega)^3$. This means $\partial_t\bs{v} + (\bs{v} \cdot \nabla)\bs{v}  - \nu  \Delta \bs{v} - \bs{f}  \in \mathcal{H}_{3}^\perp \subset \mathcal{G}_{3/2}$. Therefore, there exists 
$\mathfrak{p} \in L^2(0,T;W^{1,3/2}(\Omega))$ such that
\begin{equation}
	\partial_t\bs{v} + (\bs{v} \cdot \nabla)\bs{v}  - \nu  \Delta \bs{v} - \bs{f}  = -\nabla \mathfrak{p}.
	\label{pres32}
\end{equation}
Here we used the Helmholtz-Weyl decomposition $L^q(\Omega)^3= \mathcal{H}_{q} \oplus \mathcal{G}_{q}$, for $q = 3/2,3$. These decompositions are valid in general Lipschitz domains for $3/2-\eps<q<3+\eps$, where $\eps>0$ depends on the domain, see \cite[Theorem 11.1]{fabesmitreanew}. Now we put \eqref{pres32} in \eqref{weakp}, and apply the Divergence Theorem to obtain
\begin{equation}\label{weakp2}
\int_{\Gamma_{O}}\left(\partial_t\bs{v}+\nu \frac{\partial \bs{v}}{\partial \bs{n}}-\mathfrak{p}  \bs{n} + \dfrac{1}{2} [\bs{v}  \cdot \bs{n}]^{-}\bs{w} - \sigma_{*} \bs{n}  \right)\cdot \bs{\varphi} = 0 , \text{ a.e. in } (0,T) \qquad \forall \bs{\varphi}\in\VV_*.
\end{equation}
The previous equality holds for all $\bs{\varphi}\in H^{1/2}_{00}(\Gamma_O)\cap L^2_{\bs{n},0}(\Gamma_O)$ and by density, for all $\bs{\varphi}\in L^2_{\bs{n},0}(\Gamma_O)$. Then, using the decomposition \eqref{decL1O}, we infer the existence of $c\in\R$ such that
\begin{equation}\label{weakp4}
\partial_t\bs{v}+\nu \frac{\partial \bs{v}}{\partial \bs{n}}-\mathfrak{p}  \bs{n} + \dfrac{1}{2} [\bs{v}  \cdot \bs{n}]^{-}\bs{w} - \sigma_{*} \bs{n} = c\bs{n}.
\end{equation}
Therefore, taking $p=\mathfrak{p}+c$ we recover the boundary condition \eqref{ns}$_4$.\hfill\qed

\newpage
\noindent
{\bf Acknowledgements.} The research of Alessio Falocchi is supported by the grant \textit{Dipartimento di Eccellenza 2023-2027},
issued by the Ministry of University and Research (Italy) and is a part of the INdAM-GNAMPA project entitled ``EDP e Applicazioni: Dinamica dei Fluidi e Teoria Spettrale'' (15/01/2026-31/12/2026). Ana L. Silvestre acknowledges the financial support of \textit{Funda\c{c}\~ao para a Ci\^encia e a Tecnologia}  (FCT, Portuguese Agency for Scientific Research), through the project UIDB/04621/2025 of CEMAT/IST-ID (https://doi.org/10.54499/UID/04621/2025). The research of Gianmarco Sperone is supported by the \textit{Chilean National Agency for Research and Development} (ANID) through the \textit{Fondecyt Iniciación} grant 11250322.
\par\smallskip
\noindent
{\bf Data availability statement.} Data sharing not applicable to this article as no datasets were generated or analyzed during the current study.
\par\smallskip
\noindent
{\bf Conflict of interest statement}.  The Authors declare that they have no conflict of interest.

\phantomsection
\addcontentsline{toc}{section}{References}
\bibliographystyle{abbrv}
\bibliography{references2}

\vspace{5mm}
\noindent
\hspace{0.1mm}
\begin{minipage}{140mm}
	\textbf{Alessio Falocchi}\\
	Dipartimento di Matematica\\
	Dipartimento di Eccellenza MUR 2023-2027\\
	Politecnico di Milano\\
	Piazza Leonardo da Vinci 32\\
	20133 Milan - Italy\\
	E-mail: alessio.falocchi@polimi.it
	\vspace{0.5cm}	
\end{minipage}
\newline
\vspace{0.5cm}
\noindent
\begin{minipage}{100mm}
	\textbf{Ana Leonor Silvestre}\\
	Centro de Matemática Computacional e Estocástica\\
	Instituto Superior Técnico, Universidade de Lisboa\\
	Avenida Rovisco Pais 1\\
	1049-001 Lisbon - Portugal\\
	and\\
	Departamento de Matemática do Instituto Superior Técnico\\
	Universidade de Lisboa\\
	Avenida Rovisco Pais 1\\
	1049-001 Lisbon - Portugal\\
	E-mail: ana.silvestre@math.tecnico.ulisboa.pt
\end{minipage}
\newline
\vspace{0.5cm}
\begin{minipage}{100mm}
	\textbf{Gianmarco Sperone}\\
	Facultad de Matemáticas\\
	Pontificia Universidad Católica de Chile\\
	Avenida Vicuña Mackenna 4860\\
	7820436 Santiago - Chile\\
	E-mail: gianmarco.sperone@uc.cl
\end{minipage}
\end{document}